\def\NN{\mathbb{N}}
\DeclareMathOperator{\prob}{\mathbb{P}}
\DeclareMathOperator{\expec}{\mathbb{E}}
\newcommand{\whp}{whp}
\newcommand{\Whp}{Whp}
\newcommand{\cB}{\mathcal{B}}
\newcommand{\cP}{\mathcal{P}}
\newcommand{\cK}{\mathcal{K}}
\newcommand{\cF}{\mathcal{F}}
\newcommand{\cA}{\mathcal{A}}
\newcommand{\cE}{\mathcal{E}}
\newcommand{\EE}{\mathbb{E}}
\newcommand{\eps}{\varepsilon}
\newcommand{\Bi}{\mathrm{Bin}}
\newcommand{\Tstop}{T_{\mathrm{stop}}}
\newcommand{\pathfinder}{\textnormal{\texttt{Pathfinder}}}
\renewcommand{\Pr}{\mathbb{P}}
\theoremstyle{theorem}
\newtheorem{theorem}{\textbf{Theorem}}
\newtheorem{corollary}[theorem]{\textbf{Corollary}}
\newtheorem{lemma}[theorem]{\textbf{Lemma}}
\newtheorem{claim}[theorem]{\textbf{Claim}}
\newtheorem{question-formatless}{\textbf{Question}}
\newtheorem{proposition}[theorem]{\textbf{Proposition}}
\theoremstyle{definition}
\newtheorem{definition}[theorem]{\textbf{Definition}}
\theoremstyle{remark}
\newtheorem{remark}[theorem]{\textbf{Remark}}
\begin{document}

\title[Longest paths in random hypergraphs]{Longest paths in random hypergraphs}

\author[O. Cooley, F. Garbe, E. K. Hng, M. Kang, N. Sanhueza-Matamala, J. Zalla]{Oliver Cooley$^{*}$,
Frederik Garbe$^{\dagger}$,
Eng Keat Hng$^{\ddagger}$,
Mihyun Kang$^{*}$,
Nicol\'as Sanhueza-Matamala$^{\S}$,
Julian Zalla$^{*}$}

\renewcommand{\thefootnote}{\fnsymbol{footnote}}

\footnotetext[1]{Supported by Austrian Science Fund (FWF): I3747, W1230, \texttt{\{cooley,kang,zalla\}@math.tugraz.at}, Institute of Discrete Mathematics, Graz University of Technology, Steyrergasse 30, 8010 Graz, Austria.}

\footnotetext[2]{Supported by GA\v{C}R project 18-01472Y and RVO:
67985840, \texttt{garbe@fi.muni.cz}, Masaryk University, Faculty of
Informatics, Botanick\'a 68A, 602 00, Brno, Czech Republic.}

\footnotetext[3]{Supported by an LSE PhD Studentship, \texttt{e.hng@lse.ac.uk}, Department of Mathematics, London School of Economics, Houghton Street, London, WC2A 2AE, United Kingdom.}

\footnotetext[4]{Supported by the Czech Science Foundation, grant number GA19-08740S
   with institutional support RVO: 6798580. \texttt{nicolas@sanhueza.net}, The Czech Academy of Sciences, Institute of Computer Science, Pod Vod\'{a}renskou v\v{e}\v{z}\'{\i} 2, 182 07 Prague, Czech Republic.}

\renewcommand{\thefootnote}{\arabic{footnote}}

\begin{abstract}
Given integers $k,j$ with $1\le j \le k-1$, we consider the length of the
longest $j$-tight path in the binomial random $k$-uniform hypergraph $H^k(n,p)$.
We show that this length undergoes a phase transition from logarithmic length
to linear and determine the critical threshold, as well as proving upper and lower
bounds on the length in the subcritical and supercritical ranges.

In particular, for the supercritical case we introduce the
\pathfinder\ algorithm, a depth-first search algorithm which discovers
$j$-tight paths in a $k$-uniform hypergraph. We prove that, in the supercritical
case, with high probability this algorithm will find a long $j$-tight path.
\end{abstract}

\maketitle

\section{Introduction} \label{sec:intro}

The celebrated phase transition result of Erd\H{o}s and R\'enyi~\cite{ErdosRenyi60} for random
graphs states, in modern terminology, that the binomial random graph\footnote{$G(n,p)$
is the random graph on vertex set $[n]:= \{1,\ldots,n\}$, in which each pair of
vertices is connected by an edge with probability $p$ independently.}
$G(n,p)$ 
displays a dramatic change in the order of the largest component when
$p$ is approximately $1/n$.
If $p$ is slightly smaller than $1/n$, then whp\footnote{Short for ``with high probability'',
meaning with probability tending to $1$ as $n$ tends to infinity.}
all components
are at most of logarithmic order, while if $p$ is slightly larger than $1/n$,
then there is a unique ``giant'' component of linear order and all other components are again of logarithmic order.

\subsection{Paths in random graphs} 
While by definition any two vertices in a component are connected by a path,
there is not necessarily a correlation between the order of the component
and the lengths of such paths. Of course, if a component is small, then it
can only contain short paths, but if a component is large, this does not guarantee
the existence of a long path. Nevertheless, Ajtai, Koml\'os and Szemer\'edi~\cite{AjtaiKomlosSzemeredi81}
showed that if $p$ is larger than $1/n$, then whp $G(n,p)$ does indeed
contain a path of linear length.

Incorporating various extensions of the results of Erd\H{o}s and R\'enyi and of
Ajtai, Koml\'os and Szemer\'edi
by Pittel~\cite{Pittel88}, by {\L}uczak~\cite{Luczak91}, and by Kemkes and Wormald~\cite{KemkesWormald13},
gives the following.

\begin{theorem}\label{thm:graphpathphasetransitionconstanteps}
Let $L$ denote the length of the longest path in $G(n,p)$.
\begin{enumerate}[label=\textnormal{(\roman*)}]
\item If $0<\eps< 1$ is a constant and $p=\frac{1-\eps}{n}$, then
for any $\omega=\omega(n)$ such that $\omega\xrightarrow{n\to \infty} \infty$,
\whp
$$
\frac{\ln n-\omega}{-\ln(1-\eps)} \le L \le \frac{\ln n+\omega}{-\ln(1-\eps)}.
$$
\item If $0<\eps = \eps(n) = o(1)$ satisfies $\eps^5 n \to \infty$ and $p=\frac{1+\eps}{n}$, then \whp
$$
\left(\frac{4}{3}+o(1)\right)\eps^2 n \le L \le (1.7395+o(1))\eps^2 n.
$$
\end{enumerate}
\end{theorem}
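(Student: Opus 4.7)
The proof naturally splits according to the two parts of the theorem, since the subcritical and supercritical regimes require quite different techniques; within each part the upper and lower bounds also rely on different methods.

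For part (i), the upper bound follows from the first moment method. The expected number of paths of length $\ell$ in $G(n,p)$ is at most $\tfrac{1}{2}(n)_{\ell+1}\,p^{\ell} \le \tfrac{n}{2}(1-\eps)^{\ell}$, which tends to $0$ as soon as $\ell \ge (\ln n + \omega)/(-\ln(1-\eps))$, so Markov's inequality completes this direction. For the lower bound I would exploit the fact that \whp\ components are trees or unicyclic, and run a BFS exploration from each starting vertex: the probability that the BFS tree from a vertex $v$ reaches depth at least $d$ is approximately $(1-\eps)^{d}$ (by coupling with a subcritical Galton--Watson branching process with offspring distribution $\Bi(n,p)$), so the expected number of such deep roots is of order $n(1-\eps)^{d}$, which diverges when $d \le (\ln n - \omega)/(-\ln(1-\eps))$. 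A second-moment argument on the set of deep roots, using the fact that most pairs of exploration trees are disjoint, then shows that at least one such root exists \whp, and the root-to-deepest-leaf path in the corresponding BFS tree has the required length.

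For part (ii), the lower bound $(\tfrac{4}{3}+o(1))\eps^{2}n$ I would derive via a depth-first search, following Krivelevich and Sudakov. The idea is to run DFS while exposing edges on demand, maintaining three sets: a stack $S$ (always a path in the explored subgraph), a fully-explored set $T$, and an unexplored set $U$. At each step the algorithm queries an unused pair between the top of $S$ and $U$, moving the endpoint into $S$ on success and popping the top of $S$ into $T$ on failure. Since the queries are independent $\Bi(1,p)$ variables with $p=(1+\eps)/n$, a ballot-type analysis shows that \whp\ at some moment during the execution the stack has reached size $(\tfrac{4}{3}+o(1))\eps^{2}n$. For the upper bound $(1.7395+o(1))\eps^{2}n$ I would use the structural description of the supercritical giant: its $2$-core has \whp\ $\Theta(\eps^{3}n)$ excess edges, and any path in the giant is bounded in terms of the kernel-plus-tree decomposition, with the precise constant arising from an optimisation in the style of the differential-equation analysis of Kemkes--Wormald.

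The hardest ingredient will be the upper bound in part (ii): the constant $1.7395$ does not come from any clean counting argument, but rather from the numerical solution of an optimisation over the kernel-plus-tree decomposition of the supercritical giant, so matching it requires substantially more delicate analysis than the other three bounds. In contrast, the subcritical bounds and the DFS lower bound in the supercritical case reduce to comparatively routine first- and second-moment calculations or to a clean analysis of the stack dynamics, where tightness of the matching constant follows from elementary estimates.
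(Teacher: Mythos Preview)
This theorem is not proved in the paper: it is stated in the introduction as a known result, attributed to Pittel~\cite{Pittel88}, {\L}uczak~\cite{Luczak91}, and Kemkes--Wormald~\cite{KemkesWormald13}, and serves purely as motivation for the paper's own contribution (Theorem~\ref{thm:mainresult}). There is therefore no proof in the paper to compare your proposal against.

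That said, one point in your sketch is worth flagging. The lower bound $(\tfrac{4}{3}+o(1))\eps^2 n$ in part~(ii) is due to {\L}uczak and is obtained via a careful analysis of the longest \emph{cycle} in the giant component; the paper itself remarks that ``the bounds in the supercritical case followed from results about the length of the longest cycle''. The Krivelevich--Sudakov DFS argument you describe does produce a path of order $\Theta(\eps^2 n)$, but the standard ballot-type analysis of the stack dynamics does not by itself yield the sharp constant $4/3$; it gives a noticeably smaller leading constant. So while your outline correctly identifies the relevant circle of ideas for each of the four bounds, attributing the precise $4/3$ to a bare DFS argument overstates what that method delivers without substantial additional input.
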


Let us also note that very recently, Anastos and Frieze~\cite{AnastosFrieze19}
determined $L$ asymptotically in the range when $p=c/n$ for a sufficiently
large constant $c$ (in particular, $c$ is much larger than $1$).

In fact, the bounds in the supercritical case followed from results about the length of the longest \emph{cycle}.
These original results also hold
under the weaker assumption that $\eps^3 n\to \infty$,
and in particular the lower bound for paths is still valid even with this weaker assumption.
For the upper bound, however,
the standard sprinkling argument to show that
the longest cycle is not significantly shorter than the longest path
breaks down when $\eps = O(n^{-5})$,
and so we would no longer obtain the upper bound on $L$ in the
supercritical case.

In this paper we generalise Theorem~\ref{thm:graphpathphasetransitionconstanteps}
for various notions of paths
in random \emph{hypergraphs}.

\vspace{-0.1cm}

\subsection{Main result: paths in hypergraphs}

Given a natural number $k$,
a \emph{$k$-uniform hypergraph}
consists of a vertex set $V$ and an edge set
$E$, where each edge consists of precisely $k$ distinct vertices.
Thus a $2$-uniform hypergraph is simply a graph.
Let $H^k(n,p)$ denote the binomial random $k$-uniform hypergraph
on vertex set $[n]$ in which each set of $k$ distinct vertices forms an edge with probability $p$ independently. Thus in particular
$H^2(n,p)=G(n,p)$.

There are several different ways of generalising the concept of paths in $k$-uniform hypergraphs.
One important concept leads to a whole family of different types of paths
which have been extensively studied. Each path type is defined by a parameter
$j\in [k-1]$, which is a measure of how tightly connected the path is.
Formally, we have the following definition.
\begin{definition}
Let $k,j\in\NN$ satisfy $1 \leq j \leq k-1$ and let $\ell \in \mathbb{N}$.
A \emph{$j$-tight path of length $\ell$} in a $k$-uniform hypergraph consists of
a sequence of distinct vertices $v_1,\ldots,v_{\ell(k-j)+j}$ and a sequence of edges
$e_1,\ldots,e_\ell$, where
$e_i= \{ v_{(i-1)(k-j)+1},\ldots, v_{(i-1)(k-j)+k} \}$ for $i=1,\ldots,\ell$, see Figure~\ref{fig:hyperpath}.
\end{definition}

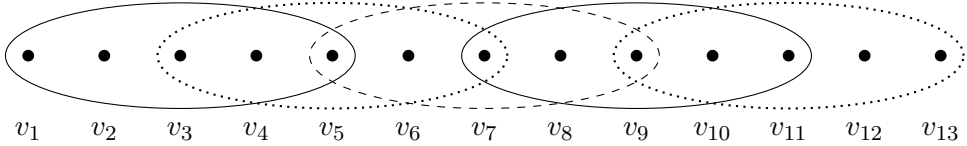
\begin{figure}[h]
\begin{center}
\begin{tikzpicture}
\foreach \x in {1,2,3,4,5,6,7,8,9,10,11,12,13}
{
\coordinate (\x) at (\x-1,0);
\filldraw (\x) circle (2pt);
\node (label\x) at (\x-1,-1) {$v_{\x}$};
}
\draw (2,0) ellipse (2.3cm and 0.7cm);
\draw[thick,dotted] (4,0) ellipse (2.3cm and 0.7cm);
\draw[dashed] (6,0) ellipse (2.3cm and 0.7cm);
\draw (8,0) ellipse (2.3cm and 0.7cm);
\draw[thick,dotted] (10,0) ellipse (2.3cm and 0.7cm);
\end{tikzpicture}
\end{center}
\caption{A $3$-tight path of length $5$ in a $5$-uniform hypergraph}\label{fig:hyperpath}
\end{figure}

Note that the case $k=2$ and $j=1$ simply defines a path in a graph. 
For $k\geq 3$, the case $j=1$ is often called a \emph{loose path}, while the 
case $j=k-1$ is often called a \emph{tight path}.

The main result of this paper is a phase transition result for $j$-tight paths
similar to Theorem~\ref{thm:graphpathphasetransitionconstanteps}.

\vspace{-0.1cm}

\begin{definition}
We use the notation
$
f\ll g
$
to mean that $f\le g/C$ for some sufficiently large constant $C$,
and similarly $f\gg g$ to mean that $f\ge Cg$ for some sufficiently large constant $C$.
\end{definition}

\begin{theorem}\label{thm:mainresult}
	Let $k,j\in\NN$ satisfy $1 \leq j \leq k-1$. Let $a\in\NN$ be the unique integer satisfying
	$1 \leq a \leq k-j$ and $a \equiv k \bmod (k-j)$.
	Let $\eps = \eps(n) \ll 1$ satisfy $\eps^3 n \xrightarrow{n\to\infty} \infty$
	and let
	$$
	p_0 = p_0(n;k,j) := \frac{1}{\binom{k-j}{a} \binom{n-j}{k-j}}.
	$$
	Let $L$ be the length of the longest $j$-tight path in $H^k(n,p)$.
	
	\begin{enumerate}[label=\normalfont{(\roman*)}]
		\item \label{item:thm-subcritical} If $p = (1-\eps)p_0$, then \whp
		\[ \frac{j \ln n- \omega + 3 \ln \eps}{- \ln (1 - \eps)} \leq L \leq \frac{j \ln n+ \omega}{-\ln(1-\eps)} ,\]
		for any $\omega=\omega(n)$ such that $\omega\xrightarrow{n\to\infty} \infty$.
		\item \label{item:thm-supercritical} If $p = (1+\eps)p_0$
		and $j\ge 2$, then for any $\delta$ satisfying $\delta \gg \max{\{\eps,\frac{\ln n}{\eps^2 n}\}}$, 
		\whp
		\[ (1 - \delta)\frac{\eps n}{(k-j)^2} \leq L \leq (1 + \delta)\frac{2 \eps n}{(k-j)^2}.\]
		\item \label{item:thm-loose} If $p=(1+\eps)p_0$ and $j = 1$,
		then for all $\delta \gg \eps$ satisfying $\delta^2\eps^3 n \xrightarrow{n\to \infty} \infty$, \whp
		 \[ (1 - \delta)\frac{\eps^2 n}{4(k-1)^2} \leq L \leq (1 + \delta)\frac{2 \eps n}{(k-1)^2}.\]
	\end{enumerate}
\end{theorem}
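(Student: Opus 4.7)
The proof of Theorem~\ref{thm:mainresult} splits into upper and lower bounds in each of the three regimes~\ref{item:thm-subcritical}--\ref{item:thm-loose}. The common tool for the upper bound in the subcritical regime is the first moment method applied to $X_\ell$, the number of $j$-tight paths of length $\ell$ in $H^k(n,p)$. A direct count gives an estimate of the form
\[
    \EE[X_\ell] \;\sim\; C\cdot n^j\,\mu^\ell,\qquad \mu \;:=\; \binom{k-j}{a}\binom{n-j}{k-j}\,p,
\]
where $\mu$ plays the role of a ``branching factor'' governing path extensions and $C$ is a constant depending only on $k,j$; the combinatorial factor $\binom{k-j}{a}$ arises from the intricate combinatorics of iterated $j$-tight extensions (where the positions of the $j$-tip cycle modulo $k-j$), and $p_0$ is designed precisely so that $\mu=1$ at $p=p_0$. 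For part~\ref{item:thm-subcritical}, setting $\ell = (j\ln n + \omega)/|\ln(1-\eps)|$ gives $\mu^\ell \approx n^{-j}e^{-\omega}$, hence $\EE[X_\ell]\to 0$ and Markov's inequality yields the upper bound.

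For the subcritical lower bound I would apply the second-moment method at $\ell^\star := (j\ln n-\omega+3\ln\eps)/|\ln(1-\eps)|$, for which $\EE[X_{\ell^\star}] \asymp e^\omega/\eps^3 \to \infty$. The technical core is bounding $\EE[X_{\ell^\star}^2]$ by classifying pairs of $j$-tight paths according to the structure of their overlap (shared sub-paths, tip alignments, and coincidences of interior vertices) and showing the dominant contribution is $\EE[X_{\ell^\star}]^2$. The $\eps^{-3}$ slack built into $\ell^\star$ is exactly what is needed to absorb the contributions of heavily-overlapping pairs, so that $\var(X_{\ell^\star}) = o(\EE[X_{\ell^\star}]^2)$ and Chebyshev gives $X_{\ell^\star}>0$ \whp.

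The supercritical lower bounds are obtained from the \pathfinder\ DFS, a hypergraph adaptation of the Krivelevich--Sudakov graph argument. At each step, \pathfinder\ attempts to extend the current $j$-tight path by exposing the edges joining the current $j$-tip to $k-j$ unvisited vertices; a deferred-decisions analysis shows that after $t$ vertices have been used, the conditional mean number of successful extensions is $\mu_t\approx(1+\eps)(1-t/n)^{k-j}$, which stays above $1$ until $t\approx \eps n/(k-j)$. For $j\ge 2$, the rigidity of tight extensions implies that backtracking is rare and almost all the explored vertices concatenate into a single $j$-tight path of length $(1-\delta)\,\eps n/(k-j)^2$, proving~\ref{item:thm-supercritical}. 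For $j=1$ (part~\ref{item:thm-loose}) each extension probes $k-1$ new vertices at once and the DFS is forced into frequent backtracking, in the spirit of Ajtai--Koml\'os--Szemer\'edi; a more delicate DFS accounting then recovers only the quadratic lower bound $\eps^2 n/(4(k-1)^2)$.

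For the supercritical upper bounds (parts~\ref{item:thm-supercritical} and~\ref{item:thm-loose}) the first-moment bound is useless since $\mu^\ell\to\infty$ at $\ell=2\eps n/(k-j)^2$. Instead, I would bound the longest $j$-tight path in terms of the vertex set of the unique giant $j$-tight connected component of $H^k(n,p)$, whose size is controlled at density $(1+\eps)p_0$ by the known high-order component phase transition for random hypergraphs; converting component size into path length (each edge of a $j$-tight path consumes $k-j$ new vertices) then produces the upper bound $(1+\delta)\,2\eps n/(k-j)^2$. The main obstacle in the whole programme is the supercritical lower bound for $j=1$: the looseness makes \pathfinder\ expose a bulk of $k-1$ potential new vertices at each step, and preserving the branching-process coupling without losing a super-logarithmic fraction of extensions requires the specific algorithmic design of \pathfinder\ and a careful accounting of the dependencies introduced by each backtracking event.
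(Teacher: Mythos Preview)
Your outline for the subcritical case and for the supercritical \emph{lower} bounds is broadly in line with the paper (first and second moment for~\ref{item:thm-subcritical}, and a DFS analysis for the lower bounds in~\ref{item:thm-supercritical} and~\ref{item:thm-loose}). However, there is a genuine error in your treatment of the supercritical \emph{upper} bounds.

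You assert that the first moment is useless at $\ell = 2\eps n/(k-j)^2$ because $\mu^\ell \to \infty$. This is based on your approximation $\EE[X_\ell]\sim C n^j \mu^\ell$, which is only valid when $\ell = o(n)$. When $\ell = \Theta(\eps n)$ the falling factorial $(n)_v$ is \emph{not} asymptotic to $n^v$: Stirling gives $(n)_v = n^v \exp\bigl(-v^2/(2n) + O(v^3/n^2)\bigr)$, and for $v=(k-j)\ell+j$ with $\ell = (1+\delta)\,2\eps n/(k-j)^2$ this extra factor is $\exp(-(1+\delta)^2 2\eps^2 n + O(\eps^3 n))$. Meanwhile $(1+\eps)^\ell = \exp((1+\delta)2\eps^2 n/(k-j) + O(\eps^3 n))$. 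The Gaussian correction beats the drift, and in fact $\EE[\hat X_\ell]\to 0$ once $\delta \gg \max\{\eps, \ln n/(\eps^2 n)\}$. So the paper's upper bound in~\ref{item:thm-supercritical} and~\ref{item:thm-loose} \emph{is} a first-moment bound, and you have simply dropped a term.

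Your proposed substitute --- bounding the longest path by the size of the giant $j$-tuple component --- would not give the stated bound. As the paper discusses, for $j\ge 2$ the component threshold $p_g = \bigl((\binom{k}{j}-1)\binom{n-j}{k-j}\bigr)^{-1}$ is strictly smaller than $p_0$, so at $p=(1+\eps)p_0$ one is already deep in the supercritical regime for components and the giant component is far larger than $\Theta(\eps n)$ vertices. Hence a component-size argument cannot yield anything close to $(1+\delta)2\eps n/(k-j)^2$ when $j\ge 2$.

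A smaller point: you identify the $j=1$ supercritical lower bound as the hardest case; in the paper it is actually the easier of the two (vertices and $j$-sets coincide, so the analysis is short), whereas the $j\ge 2$ lower bound requires an additional bounded-degree lemma to control how many extensions are blocked by previously explored $j$-sets.
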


In other words, we have a phase transition at threshold $p_0$.

We will prove the upper bounds in all three cases using the first moment method.
The lower bound in the subcritical case, i.e.\ in~\ref{item:thm-subcritical},
will be  proved using the second moment method---while the strategy
is standard, there are significant technical complications to be overcome.
However, the second moment method is not strong enough in the \emph{supercritical} cases,
and therefore we will prove the lower bounds in~\ref{item:thm-supercritical}
and~\ref{item:thm-loose} by introducing the \pathfinder\ search algorithm
which explores $j$-tight paths in $k$-uniform hypergraphs, and which is the
main contribution of this paper. The algorithm is based on a depth-first search process,
but it is a rather delicate task to design it in such a way that it both
correctly constructs
$j$-tight paths and also admits reasonable probabilistic analysis. We will analyse
the likely evolution of this algorithm and prove that whp
it discovers a $j$-tight path of the appropriate length.

To help interpret Theorem~\ref{thm:mainresult},
let us first observe that the results become stronger for smaller $\delta$,
so $\delta$ may be thought of as an error term.
Furthermore, in all cases of the theorem we may choose $\delta$ to be no
larger than an arbitrarily small constant, while in some cases we may even
have $\delta \to 0$.
In the subcritical regime (Theorem~\ref{thm:mainresult}\ref{item:thm-subcritical}), note that
$-\ln(1-\eps) = \eps + O(\eps^2)$ and that the
term $3\ln \eps$ in the lower bound becomes negligible (and in particular
could be incorporated into $\omega$) if $\eps$ is constant.
For smaller $\eps$, however, it represents a gap between the lower and upper bounds.
In the supercritical case for $j\ge 2$ (Theorem~\ref{thm:mainresult}\ref{item:thm-supercritical}), the length $L$ is certainly of order
$\Theta(\eps n)$, but the lower and upper bounds differ by approximately a multiplicative factor
of $2$. In the supercritical case for $j=1$ (Theorem~\ref{thm:mainresult}\ref{item:thm-loose}), the lower and upper bounds
differ by a multiplicative factor of $\Theta(\eps)$.
This has subsequently been improved by Cooley, Kang and Zalla~\cite{CooleyKangZalla21},
who lowered the upper bound to within a constant of the lower bound
by analysing a structure similar to the $2$-core in random hypergraphs.
We will discuss all of these bounds and how they might be improved
in more detail in Section~\ref{sec:concluding}.

\begin{remark}\label{rem:weaker}
In fact, the statement of Theorem~\ref{thm:mainresult} has been slightly
weakened compared to what we actually prove in order to improve the clarity.
More precisely, the full strength of the assumption on $\delta$
in~\ref{item:thm-loose} is only
required for the lower bound; the upper bound would in fact hold for
any $\delta \gg \max\{\eps,\frac{\ln n}{\eps^2 n}\}$ as in~\ref{item:thm-supercritical}
(c.f.\ Lemma~\ref{lem:upperbound}).
Furthermore, the assumption that $\delta \gg \frac{\ln n}{\eps^2 n}$ in~\ref{item:thm-supercritical}
is only needed for the upper bound; the lower bound holds with just the
assumption that $\delta \gg \eps$ (c.f.\ Lemma~\ref{lem:mainlemmastoppingtime}).
\end{remark}

\subsection{Related work}

The study of $j$-tight paths (and the corresponding notion of $j$-tight cycles)
has been a central theme in hypergraph theory,
with many generalisations of classical graph results,
including Dirac-type and Ramsey-type 
(see~\cite{KOSurvey14,MubayiSukSurvey,ZhaoSurvey} for surveys),
as well as Erd\H{o}s-Gallai-type results~\cite{ABCM17,GyoriKatonaLemons16}.

There has also been some work on $j$-tight cycles in random hypergraphs.
Dudek and Frieze~\cite{DudekFrieze11,DudekFrieze13} determined the thresholds for the appearance of
both loose and tight Hamilton cycles in $H^k(n,p)$,
as well as determining the threshold for a $j$-tight Hamilton cycle up to
a multiplicative constant.
Recently, Narayanan and Schacht~\cite{NarayananSchacht2019} pinpointed the
precise value of the sharp threshold for the appearance of $j$-tight Hamilton cycles in $k$-uniform hypergraphs,
provided that $k > j > 1$.

Theorem~\ref{thm:mainresult} addresses a range when $p$ is significantly smaller than the threshold for a $j$-tight Hamilton cycle,
and consequently the longest $j$-tight paths are far shorter. Recently Cooley~\cite{Cooley21}
has extended the lower bound in Theorem~\ref{thm:mainresult}\ref{item:thm-supercritical}
to the range when $p=cp_0$ for some constant $c>1$, and shown that with
a much more difficult version of the common ``sprinkling'' argument,
one can also find a $j$-tight \emph{cycle} of approximately the same length.

Recall that for random graphs, the phase transition thresholds for the length of the 
longest path and the order of the largest component are both $1/n$.
It is therefore natural to wonder whether something similar holds for $j$-tight
paths in random hypergraphs, since
for each $1\le j \le k-1$, there is a notion of connectedness
that is closely related to $j$-tight paths: two $j$-tuples $J_1,J_2$ of vertices
are $j$-tuple-connected if there is a sequence of edges $e_1,\ldots,e_\ell$ such
that $J_1\subset e_1$ and $J_2\subset e_\ell$, and furthermore
any two consecutive edges $e_i,e_{i+1}$ intersect in at least $j$ vertices.
A $j$-tuple component is a maximal collection of pairwise $j$-tuple-connected
$j$-sets.

The threshold for the emergence of the giant $j$-tuple component
in $H^k(n,p)$ is known to be
$$
p_g=p_g(n;k,j)=\frac{1}{\left(\binom{k}{j}-1\right)\binom{n-j}{k-j}}.
$$
The case $k=2$ and $j=1$ is the classical graph result of Erd\H{o}s and R\'enyi.
The case $j=1$ for general $k$ was first proved by
Schmidt-Pruzan and Shamir~\cite{SchmidtShamir85}. The case of general
$k$ and $j$ was first proved by Cooley, Kang, and
Person~\cite{CooleyKangPerson18}.

One might expect the threshold for the emergence of a $j$-tight path of linear
length to have the same threshold. However, it turns out that this is only true
in the case when $j=1$. More precisely, 
in the case $j=1$, the probability threshold of $\frac{1}{(k-1)\binom{n-j}{k-j}}$
given by Theorem~\ref{thm:mainresult} matches the threshold for the emergence
of the giant (vertex-)component.
However, for $j\ge 2$, the two thresholds do not match. A heuristic explanation for this is
that when exploring a $j$-tuple component via a (breadth-first or depth-first)
search process, each time we find an edge
we may continue exploring a \emph{$j$-tuple component} from any of the $\binom{k}{j}-1$ 
new $j$-sets within this edge (all are new except the $j$-set from which
we first found the edge).
However, when exploring a \emph{$j$-tight path}, the restrictions on the structure mean that
not all $j$-sets within the edge may form the last $j$ vertices of the path. For $a$ as defined in Theorem~\ref{thm:mainresult}, it will turn out that we only have $\binom{k-j}{a}$ choices for
the $j$-set from which to continue the path (this will be explained
in more detail in Section~\ref{sec:dfsalg}).

\subsection{Paper overview}

The remainder of the paper is arranged as follows.

In Section~\ref{sec:prelim}, we will analyse the structure of $j$-tight paths
and prove some preliminary results concerning the number of automorphisms,
which will be needed later. We also collect some standard probabilistic results
which we will use.

Subsequently, Section~\ref{sec:secondmoment} will be devoted to a second moment calculation,
which will be used to prove the lower bound on $L$ in the subcritical case of Theorem~\ref{thm:mainresult}.
This is in essence a very standard method, although this particular application presents considerable
technical challenges.

The second moment method breaks down when the paths become too long, and in particular it
is too weak to prove the lower bounds in the supercritical case.
Therefore the main contribution of this paper is an alternative
strategy, inspired by previous proofs of phase transition
results regarding the order of the giant component. These proofs, due to Krivelevich and Sudakov~\cite{KrivelevichSudakov13}
as well as Cooley, Kang, and Person~\cite{CooleyKangPerson18} and
Cooley, Kang, and Koch~\cite{CooleyKangKoch18}, are based on an analysis of search processes which explore components.

We therefore introduce
the \pathfinder\ algorithm, which is in essence a depth-first search process for paths,
in Section~\ref{sec:dfs}.
In Section~\ref{sec:algprops}, we observe some basic facts about the \pathfinder\ algorithm,
which we subsequently use in Section~\ref{sec:loose} ($j=1$)
and Section~\ref{sec:alganalysis} ($j\ge 2$) to prove that \whp\ the
\pathfinder\ algorithm finds a $j$-tight path of the appropriate length, proving the lower bounds on $L$
in the supercritical case of Theorem~\ref{thm:mainresult}.

We collect together all of the previous results to complete the proof of
Theorem~\ref{thm:mainresult} in Section~\ref{sec:mainproof}.
Finally in Section~\ref{sec:concluding} we discuss some open problems,
including possible strengthenings of Theorem~\ref{thm:mainresult}.

\section{Preliminaries}\label{sec:prelim}

We first gather some notation and terminology which we will use throughout the paper.

Throughout the paper, $k$ and $j$ are fixed integers with $1\le j \le k-1$.
All asymptotics are with respect to $n$, and we use the standard
Landau notations $o(\cdot)$, $O(\cdot),\Theta(\cdot),\Omega(\cdot)$
with respect to these asymptotics.
In particular, any value
which is bounded by a function of $k$ and $j$ is $O(1)$. If $S$ is a set and $m\in\NN_0,$ then $\binom{S}{m}$ denotes the set of $m$-element subsets of $S$. 
For $m,i \in \NN$, we use $(m)_i := m(m-1)\ldots(m-i+1)$ to denote the $i$-th falling factorial.

Recall that for $\ell\in\NN$, a $j$-tight path of length $\ell$ in a $k$-uniform hypergraph contains
$\ell$ edges and $(k-j)\ell + j$ vertices. Throughout the paper,
whenever $j,k,\ell$ are clear from the context, we will denote
by
\begin{equation}\label{eq:def:v}
v=v_{j,k}(\ell):=(k-j)\ell+j
\end{equation}
the number of vertices in such a path.
Furthermore, for the rest of the paper we fix $a$ as in Theorem~\ref{thm:mainresult},
i.e.\ $a$ is the unique integer such that
\begin{equation}\label{eq:def:a}
1\le a \le k-j \qquad \mbox{and} \qquad a \equiv k \pmod {k-j}
\end{equation}
and we set
\begin{equation}\label{eq:def:b}
b:=k-j-a.
\end{equation}

Throughout the paper we ignore floors and ceilings whenever these
do not significantly affect the argument.
For the sake of clarity and readability,
we delay many proofs
of auxiliary results, particularly those that are applications
of standard ideas or involve lengthy technical details, to the appendices.

\subsection{Structure of $j$-tight paths}\label{sec:pathstructure}

For $\ell\in\NN$, let $\cP_{\ell}$ be the set of all $j$-tight paths of length $\ell$ in the complete $k$-uniform hypergraph on $[n]$, denoted by $K^{(k)}_n$. Thus
$\cP_\ell$ is the set of \emph{potential} $j$-tight paths
of length $\ell$ in $H^k(n,p)$.

It is important to observe that, depending on the values of $k$ and $j$,
the presence of one $j$-tight path $P\in \cP_\ell$ in $H^k(n,p)$ may instantly
imply the presence of many more with exactly the same edge set.
In the graph case, there are only two paths with exactly the same edge set
(we obtain the second by reversing the orientation), but for general $k$
and $j$ there may be more.

Let us demonstrate this with the following example for the case $k=5$ and $j=2$ (see Figure~\ref{fig:pathpartition}).
\vspace{0.4cm}
\begin{center}
\begin{figure}[h]

\begin{tikzpicture}[level/.style={},decoration={brace,mirror,amplitude=7},scale=1.1]

\fill  (0,0)  circle (0.06)  ;
\fill  (0.6,0) circle (0.06) ;
\fill (1.2,0) circle (0.06);
\fill (1.8,0) circle (0.06);
\fill (2.4,0) circle (0.06);
\fill (3,0) circle (0.06);
\fill (3.6,0) circle (0.06);
\fill (4.2,0) circle (0.06);
\fill (4.8,0) circle (0.06);
\fill (5.4,0) circle (0.06);
\fill (6,0) circle (0.06);
\fill (6.6,0) circle (0.06);
\fill (7.2,0) circle (0.06);
\fill (7.8,0) circle (0.06);
\fill (8.4,0) circle (0.06);
\fill (9,0) circle (0.06);
\fill (9.6,0) circle (0.06);

\draw (1.18,0) ellipse (45pt and 12pt);
\draw[thick, dotted] (2.99,0) ellipse (45pt and 12pt);
\draw (4.77,0) ellipse (45pt and 12pt);
\draw[thick, dotted] (6.58,0) ellipse (45pt and 12pt);
\draw (8.4,0) ellipse (45pt and 12pt);

\draw [thick, black,decorate,decoration={brace,amplitude=3pt,mirror},xshift=0.4pt,yshift=-0.4pt](0,-0.7) -- (1.2,-0.7) node[black,midway,yshift=-0.6cm] {$F_1$};
\draw [thick, black,decorate,decoration={brace,amplitude=3pt,mirror},xshift=0.4pt,yshift=-0.4pt](1.75,-0.7) -- (2.45,-0.7) node[black,midway,yshift=-0.6cm] { $A_1$};
\draw [thick, black,decorate,decoration={brace,amplitude=2pt,mirror},xshift=0.4pt,yshift=-0.4pt](2.8,-0.7) -- (3.2,-0.7) node[black,midway,yshift=-0.6cm] {$B_1$};
\draw [thick, black,decorate,decoration={brace,amplitude=2pt,mirror},xshift=0.4pt,yshift=-0.4pt](3.6,-0.7) -- (4.2,-0.7) node[black,midway,yshift=-0.6cm] { $A_2$};
\draw [thick, black,decorate,decoration={brace,amplitude=2pt,mirror},xshift=0.4pt,yshift=-0.4pt](4.6,-0.7) -- (5,-0.7) node[black,midway,yshift=-0.6cm] { $B_2$};
\draw [thick, black,decorate,decoration={brace,amplitude=2pt,mirror},xshift=0.4pt,yshift=-0.4pt](5.3,-0.7) -- (6.1,-0.7) node[black,midway,yshift=-0.6cm] { $A_3$};
\draw [thick, black,decorate,decoration={brace,amplitude=2pt,mirror},xshift=0.4pt,yshift=-0.4pt](6.4,-0.7) -- (6.8,-0.7) node[black,midway,yshift=-0.6cm] { $B_3$};
\draw [thick, black,decorate,decoration={brace,amplitude=2pt,mirror},xshift=0.4pt,yshift=-0.4pt](7.2,-0.7) -- (7.9,-0.7) node[black,midway,yshift=-0.6cm] {$A_4$};
\draw [thick, black,decorate,decoration={brace,amplitude=3pt,mirror},xshift=0.4pt,yshift=-0.4pt](8.3,-0.7) -- (9.7,-0.7) node[black,midway,yshift=-0.6cm] { $G_1$};
\end{tikzpicture}
\caption{A $2$-tight path of length $5$ in a $5$-uniform hypergraph, with a natural
partition of vertices.}\label{fig:pathpartition}
\end{figure}
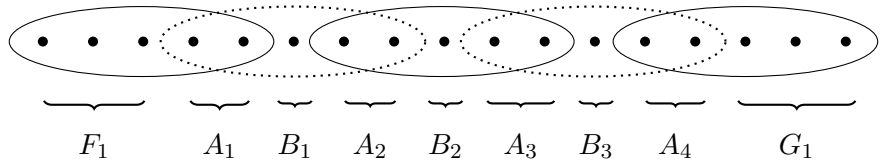

\end{center}

Observe that we have partitioned the vertices into sets ($F_1,A_1,\ldots$)
 according
to which edges they are in---each set of the partition is maximal with the property that every
vertex in that set is in exactly the same edges of the $j$-tight path.
Therefore we can re-order the vertices arbitrarily within any of
these sets and obtain another $j$-tight path with the same edge set,
and therefore also the same length.
Similarly as for graphs, we can also reverse the orientation
of the vertices (and also the edges) to obtain another $j$-tight
path with the same edge set.

It will often be convenient to consider such paths as being
the same, even though the order of vertices is different.
Therefore we define an equivalence relation $\sim_\ell$
on $\cP_{\ell}$ as follows. For any $A,B\in\cP_{\ell}$,
we say that $A \sim_\ell B$ if they have exactly the same edges.

We will be interested in the equivalence classes of this relation.
Let $z_\ell=z_\ell(k,j)$ denote the size of each equivalence class of $\sim_\ell$
(note that, by symmetry, each equivalence class has the same size
and so $z_\ell$ is well-defined).
Further, let $\hat{\cP}_{\ell}$ be the set of equivalence classes
of $\sim_\ell$.
Observe that if some $P \in \cP_\ell$ is in
$H^k(n,p)$, then so is every path in its equivalence
class $\hat P \in \hat \cP_\ell$. We abuse terminology slightly by saying
that the equivalence class $\hat P$ lies in $H^k(n,p)$,
and write $\hat P \subset H^k(n,p)$.
We define 
$\hat{X}_{\ell}$ to be the number of equivalence classes
for which this is the case.
Then
\begin{equation}\label{eqn:expectationexact}
    \EE(\hat{X}_{\ell})
    =\sum_{\hat P\in\hat{\cP}_{\ell}}\prob\left(\hat P \subset H^k(n,p)\right)
    =|\hat{\cP}_{\ell}|p^{\ell}=\frac{(n)_{v}}{z_\ell}p^{\ell},
\end{equation}
where $v=(k-j)\ell+j$ is the number of vertices in a $j$-tight path with $\ell$ edges (as defined in \eqref{eq:def:v}).

We therefore need to estimate $z_{\ell}$.
To do so, we will analyse the structure of $j$-tight paths, inspired by the
example in Figure~\ref{fig:pathpartition}.
This analysis leads to the following lemma.

\begin{lemma}\label{lem:isomorphisms}
Let $s=s(j,k):=\left\lceil\frac{k}{k-j}\right\rceil-1$. Then
\begin{equation*}
z_\ell=
\begin{cases}
\Theta(1) & \mbox{if } \ell \le s+1;\\
\frac{2}{b!}(a!b!)^{\ell-s}((k-j)!)^{2s} & \mbox{if } \ell \ge s+2.
\end{cases}
\end{equation*}
In particular,
\begin{equation}\label{eq:isomorphisms}
z_\ell= \Theta\left((a!b!)^\ell\right).
\end{equation}
\end{lemma}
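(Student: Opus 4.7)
My overall approach is to observe that $z_\ell$ equals the size of the orbit of any fixed $P\in\cP_\ell$ under the group of relabellings of the vertex sequence that fix the unordered edge set. Determining this orbit reduces to two tasks: classifying the permutations of a fixed vertex sequence that preserve the edge sequence, and bounding the freedom in recovering the edge sequence from the edge set alone.

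For the first task, I would partition the vertex sequence of $P$ into equivalence classes according to ``which edges does this vertex belong to'': two vertices are declared equivalent iff they lie in exactly the same subset of $\{e_1,\ldots,e_\ell\}$. Using the explicit criterion that $v_m \in e_t$ iff $\max\{1,\lceil (m-j)/(k-j)\rceil\}\le t\le \min\{\ell,\lfloor (m-1)/(k-j)\rfloor+1\}$, together with the identity $k = s(k-j)+a$, I would show that for $\ell \ge s+2$ the partition consists of $s$ ``boundary'' classes of size $k-j$ at each end (the $i$-th start-boundary class consisting of the vertices that lie in exactly $\{e_1,\ldots,e_i\}$, and symmetrically at the other end), together with $\ell-s$ ``$A$-classes'' of size $a$ (vertices in $s+1$ consecutive edges) alternating with $\ell-s-1$ ``$B$-classes'' of size $b$ (vertices in $s$ consecutive edges) in the middle.

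For the second task, I would argue that the edge sequence is determined by the edge set up to reversal. Specifically, for $1 \le t \le \ell-1$ one has $|e_i\cap e_{i+t}| = \max\{0,k-t(k-j)\}$, which is $\ge j$ only when $t = 1$. Hence the auxiliary graph on $\{e_1,\ldots,e_\ell\}$ with two edges adjacent iff they share at least $j$ vertices is the $\ell$-vertex path $e_1{-}e_2{-}\cdots{-}e_\ell$, whose Hamilton orderings are unique up to reversal. Combining both tasks, $\hat P$ is exactly the set of orderings obtainable by independently permuting within each partition class and optionally reversing the whole path, and for $\ell \ge 2$ reversal is not realisable as any within-class permutation (it swaps distinct start- and end-boundary classes). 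Therefore
\[ z_\ell = 2\cdot((k-j)!)^{2s}\cdot(a!)^{\ell-s}(b!)^{\ell-s-1} = \tfrac{2}{b!}(a!b!)^{\ell-s}((k-j)!)^{2s}, \]
and $z_\ell = \Theta((a!b!)^\ell)$ follows since the other factors depend only on $k,j$. For $\ell \le s+1$ the vertex count $v=(k-j)\ell+j$ is bounded by a constant depending only on $k,j$, so trivially $1 \le z_\ell \le v! = \Theta(1)$.

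The main obstacle is the structural analysis of the partition: correctly identifying the transitional classes where the boundary regions meet the periodic middle, and verifying that the assumption $\ell \ge s+2$ suffices for the $2s$ boundary classes to be pairwise distinct and also disjoint from all middle classes. This ultimately reduces to the inequality $s(k-j) = k-a < k$, which guarantees that for $v \ge (k-j)(s+2)+j$ the boundary regions $[1,s(k-j)]$ and $[v-s(k-j)+1,v]$ do not overlap, leaving a non-empty middle that follows the periodic $A$/$B$ pattern.
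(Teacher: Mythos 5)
Your proposal is correct and follows essentially the same route as the paper: partition the vertices into maximal classes according to which edges they lie in (the paper's $F_i,G_i,A_i,B_i$ of sizes $k-j,k-j,a,b$), multiply the within-class factorials, include a factor of $2$ for reversal, and dispose of $\ell\le s+1$ by the trivial bound $z_\ell\le v!=O(1)$. Your auxiliary-graph argument that the edge ordering is recoverable from the edge set up to reversal makes explicit a step the paper leaves implicit, but it does not change the approach.
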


\begin{proof}
Let us first observe that if $\ell \le s+1$, then
a $j$-tight path with $\ell$ edges has $v$ vertices,
where
$$v = (k-j)\ell+j \le k(\ell+1) \le k(s+2) = O(1),$$
and therefore $1\le z_\ell \le v! = O(1)$,
and the statement of the lemma follows for this case.
We therefore assume that $\ell \ge s+2$.

We aim to determine the natural partition of the vertices of a $j$-tight path 
according to which edges they are in, as we did in the example in Figure~\ref{fig:pathpartition}.

Denote the edges of the $j$-tight path $P \in \cP_\ell$ by $(e_1,\ldots,e_{\ell})$,
in the natural order.
Recall that $s=\lceil \frac{k}{k-j}\rceil-1$, and observe that $s$ is the largest
integer such that $(k-j)s <k$, and therefore the largest integer
such that $e_i\cap e_{i+s} \neq \emptyset$.
We define
\begin{align*}
F_i & :=e_i\setminus e_{i+1} &  \mbox{for } 1\le i \le s;\\
G_i & :=e_{\ell-s+i}\backslash e_{\ell-s+i-1} & \mbox{for }1\le i \le s.
\end{align*}
We also define
\begin{align*}
A_i&:=e_i\cap e_{i+s} & \mbox{for } 1\leq i \leq \ell-s,\hspace{0.68cm}\\
B_i&:=e_{i+s}\backslash (e_{i+s+1}\cup e_i) &  \mbox{for } 1\leq i\leq\ell-s-1.
\end{align*}

Observe that $A_i \cup B_i = e_{i+s} \setminus e_{i+s+1}$.
Furthermore, since $s$ is the largest integer such that $e_{i+s+1}$ intersects $e_{i+1}$, we have that $(e_{i+s}\setminus e_{i+s+1})\subset e_{i+1}$
and that $A_{i+1} \subseteq (e_{i+1}\setminus e_{i})$,
and therefore $A_{i+1}\cup B_i=e_{i+1}\setminus e_{i}$.
Since we also have $A_i \cap B_i = A_{i+1} \cap B_i = \emptyset$,
the vertices of the path $P$ are now partitioned into parts
$$
(F_1,\ldots,F_s,A_1,B_1,A_2,B_2,\ldots,A_{\ell-s-1},B_{\ell-s-1},A_{\ell-s},G_1,\ldots,G_s)
$$
(in the natural order along $P$).
Observe further that
the parts are of maximal size
such that the vertices within each part are in exactly the same edges.
We refer to $\bigcup_{i=1}^s F_i = e_1\setminus e_{s+1}$ as the \emph{head}
of the path $P$ and to $\bigcup_{i=1}^s G_i = e_\ell\setminus e_{\ell-s}$
as the \emph{tail}.
Note that the vertices within each part can be rearranged to obtain
a new $j$-tight path with exactly the same edges.
We can also change the orientation of the path (i.e.\ reverse the order of
the edges) to obtain a new path with the same edge set.
(If $\ell=0,1$, this reorientation would already have been counted,
but recall that we have assumed that $\ell \ge s+2$.)
Thus we have
\begin{equation}\label{eq:reorderings}
z_\ell = 2 \left(\prod_{i=1}^s |F_i|!|G_i|!\right)\left(\prod_{i=1}^{\ell-s}|A_i|!\right)\left(\prod_{i=1}^{\ell-s-1}|B_i|!\right).
\end{equation}
It therefore remains to determine the sizes of the $F_i,G_i,A_i,B_i$.

\begin{claim}\label{claim:partsizes}
\begin{align*}
|F_i|=|G_i| & =k-j & \mbox{for }1\le i \le s;\\
|A_i| & =a & \mbox{for }1\le i \le \ell-s;\\
|B_i| & =b & \mbox{for }1\le i \le \ell-s-1.
\end{align*}
\end{claim}

Substituting these values into~\eqref{eq:reorderings},
we obtain precisely the statement of Lemma~\ref{lem:isomorphisms}.
Thus the proof is complete up to verifying Claim~\ref{claim:partsizes}.
This proof, which consists of an elementary checking of the definitions,
appears in Appendix~\ref{app:partsizes}.
\end{proof}

Equation~\eqref{eqn:expectationexact} and Lemma~\ref{lem:isomorphisms} together
give the following immediate corollary.

\begin{corollary}\label{cor:expectationupperbound}
$$
\expec(\hat{X}_{\ell})=\Theta(1)\frac{(n)_{v}}{(a!b!)^{\ell}}p^{\ell}.
$$
\end{corollary}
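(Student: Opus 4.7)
The plan is to observe that Corollary~\ref{cor:expectationupperbound} is an immediate consequence of the two facts already on the table, so essentially no new work is required: one simply plugs the asymptotic estimate for $z_\ell$ into the exact formula for $\EE(\hat{X}_\ell)$.

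More concretely, equation~\eqref{eqn:expectationexact} gives the exact identity
\[
\EE(\hat{X}_\ell) = \frac{(n)_v}{z_\ell} p^\ell,
\]
which was obtained by summing the probability $p^\ell$ that a fixed equivalence class of $j$-tight paths of length $\ell$ is present in $H^k(n,p)$ over all $|\hat{\cP}_\ell| = (n)_v / z_\ell$ such classes. Lemma~\ref{lem:isomorphisms}, specifically the form stated in~\eqref{eq:isomorphisms}, gives
\[
z_\ell = \Theta\bigl((a!b!)^\ell\bigr),
\]
uniformly in $\ell$ (both for the short regime $\ell \leq s+1$, where $z_\ell = \Theta(1)$ and $(a!b!)^\ell = \Theta(1)$ since $\ell = O(1)$, and for the long regime $\ell \geq s+2$, where the exponential factor $(a!b!)^{\ell - s}$ dominates and the remaining factors $2/b!$ and $((k-j)!)^{2s}$ are absorbed into the $\Theta(1)$ constant).

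Substituting this estimate into~\eqref{eqn:expectationexact} yields
\[
\EE(\hat{X}_\ell) = \frac{(n)_v}{\Theta((a!b!)^\ell)} p^\ell = \Theta(1) \cdot \frac{(n)_v}{(a!b!)^\ell} p^\ell,
\]
which is precisely the statement of the corollary. There is no real obstacle here: the only thing to double-check is that the implicit constants in $z_\ell$ are bounded away from both $0$ and $\infty$ uniformly over all $\ell \in \NN$, which is guaranteed by the case split in Lemma~\ref{lem:isomorphisms}.
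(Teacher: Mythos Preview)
Your proposal is correct and follows exactly the same approach as the paper, which simply states that the corollary is an immediate consequence of equation~\eqref{eqn:expectationexact} and Lemma~\ref{lem:isomorphisms}. Your added remark about uniformity of the implicit constants over all $\ell$ is a welcome clarification but not something the paper spells out.
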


\subsection{Large deviation bounds}

In this section we collect some standard results which will be needed later.

We will use the following Chernoff bound,
(see e.g.~\cite[Theorem 2.1]{JansonLuczakRucinskibook}).
We use $\Bi(N,p)$ to denote the binomial distribution with parameters $N \in \NN$ and $p \in [0,1]$.
\begin{lemma}\label{lem:chernoffbounds}
	If $X\sim \Bi(N,p)$, then for any $\xi\ge 0$
	\begin{equation} \label{eqn:chernoffstd}
	\mathbb{P}(X\geq Np+\xi)\leq\exp\left(-\frac{\xi^2}{2(Np+\frac{\xi}{3})}\right),
	\end{equation}
	and
	\begin{equation*}
	\mathbb{P}(X \leq Np - \xi)\leq\exp\left(-\frac{\xi^2}{2Np}\right).
	\end{equation*}
	
\end{lemma}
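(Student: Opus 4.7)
The plan is to prove this via the standard exponential moment (Cram\'er--Chernoff) method. Write $X = \sum_{i=1}^{N} X_i$ as a sum of independent Bernoulli$(p)$ indicators. For the upper tail, I would fix a parameter $t>0$ and apply Markov's inequality to $e^{tX}$, noting that by independence
\[
\expec\bigl[e^{tX}\bigr] = \bigl(1-p+pe^{t}\bigr)^{N} \le \exp\!\bigl(Np(e^{t}-1)\bigr),
\]
where the last inequality uses $1+x \le e^{x}$. This yields
\[
\prob(X \ge Np+\xi) \le \exp\!\bigl(Np(e^{t}-1) - t(Np+\xi)\bigr).
\]

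The next step is to optimise over $t>0$; the minimising choice is $t = \ln\bigl(1+\xi/(Np)\bigr)$, which gives the sharp bound $\prob(X \ge Np+\xi) \le \exp\bigl(-Np\cdot h(\xi/(Np))\bigr)$, where $h(x):=(1+x)\ln(1+x)-x$. To obtain the more convenient form stated in the lemma I would then invoke the elementary calculus inequality
\[
h(x) \ge \frac{x^{2}}{2(1+x/3)} \qquad \text{for all } x\ge 0,
\]
which is standard and can be verified by comparing Taylor expansions or by checking that the difference is nonnegative with nonnegative derivative at $0$. Substituting $x=\xi/(Np)$ yields precisely the exponent $-\xi^{2}/\bigl(2(Np+\xi/3)\bigr)$ claimed in \eqref{eqn:chernoffstd}.

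For the lower tail the argument is analogous but with $t<0$: Markov applied to $e^{tX}$ again gives $\prob(X\le Np-\xi) \le \exp\bigl(Np(e^{t}-1) - t(Np-\xi)\bigr)$, optimised at $t = \ln\bigl(1-\xi/(Np)\bigr)$, leading to the bound $\exp\bigl(-Np\cdot h(-\xi/(Np))\bigr)$ (valid when $\xi \le Np$; otherwise the statement is trivial since the event is empty). Here I would use the cleaner inequality $h(-x) \ge x^{2}/2$ for $0\le x \le 1$, which follows from a direct series comparison, to obtain the exponent $-\xi^{2}/(2Np)$.

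There is no serious obstacle: the only nontrivial ingredients are the two real-variable inequalities controlling $h$, and both are well known. In fact, since the lemma is quoted verbatim from~\cite{JansonLuczakRucinskibook}, the natural course of action is simply to cite that reference rather than reproduce the derivation in the body of the paper.
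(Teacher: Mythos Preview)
Your proof sketch is correct and is the standard derivation of these Chernoff bounds. The paper itself does not prove this lemma at all; it simply cites \cite[Theorem~2.1]{JansonLuczakRucinskibook}, exactly as you suggest in your final sentence.
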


It will often be more convenient to use the following one-sided
form, which follows directly from Lemma~\ref{lem:chernoffbounds}.
The proof appears in Appendix~\ref{app:chernoff}.
\begin{lemma}\label{lem:Chernoffwitherror}
Let $X\sim \Bi(N,p)$ and let $\alpha>0$ be some arbitrarily small constant.
Then with probability at least $1-\exp(-\Theta(n^\alpha))$ we have
	$X \le 2Np+n^\alpha$.
\end{lemma}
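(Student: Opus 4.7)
The plan is to deduce this directly from the upper tail bound in Lemma~\ref{lem:chernoffbounds} by choosing the deviation parameter carefully. Setting $\xi := Np + n^{\alpha}$ in~\eqref{eqn:chernoffstd} gives
$$
\Pr(X \ge 2Np + n^{\alpha}) \;=\; \Pr(X \ge Np + \xi) \;\le\; \exp\!\left(-\frac{(Np+n^{\alpha})^2}{2Np + \tfrac{2}{3}(Np+n^{\alpha})}\right),
$$
so the task reduces to showing that the exponent on the right-hand side is at least of order $n^{\alpha}$.

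To verify this, I would split into two cases according to which of $Np$ and $n^{\alpha}$ is the larger term in $\xi$. When $Np \ge n^{\alpha}$, one has $\xi \le 2Np$, so the denominator is at most $\tfrac{10}{3}Np$ and the numerator is at least $(Np)^2$; the exponent is therefore at least $\tfrac{3}{10}Np \ge \tfrac{3}{10}n^{\alpha}$. When $Np < n^{\alpha}$, one has $\xi \le 2n^{\alpha}$, so the denominator is at most $\tfrac{10}{3}n^{\alpha}$ and the numerator is at least $n^{2\alpha}$; the exponent is again at least $\tfrac{3}{10}n^{\alpha}$. Combining the two cases yields $\Pr(X > 2Np+n^{\alpha}) \le \exp(-\Omega(n^{\alpha}))$, which is the claimed bound.

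There is no real obstacle here, and the argument is an elementary calculation; the only minor subtlety is the case split, which is unavoidable because the Chernoff exponent in~\eqref{eqn:chernoffstd} transitions between a Gaussian-type and Poisson-type regime depending on whether the deviation $\xi$ is small or large compared to the mean $Np$. Taking $\xi$ to incorporate both scales $Np$ and $n^{\alpha}$ via the sum $\xi = Np + n^{\alpha}$ is what makes a single, uniform statement possible.
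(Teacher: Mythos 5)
Your argument is correct and essentially the same as the paper's: both apply the upper-tail bound of Lemma~\ref{lem:chernoffbounds} and split into cases according to whether $Np$ or $n^{\alpha}$ dominates (the paper simply chooses $\xi=Np$ or $\xi=n^{\alpha}$ separately in each case rather than $\xi=Np+n^{\alpha}$ uniformly). No issues.
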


\section{Second moment method: lower bound} \label{sec:secondmoment}

In this section we prove the lower bound in statement~\ref{item:thm-subcritical} of Theorem~\ref{thm:mainresult}.
The general basis of the argument is a completely standard second moment method--- 
however, applying the method to this particular problem is rather tricky and so the argument is lengthy.

For technical reasons that will become apparent during the proof,
we need to handle the case when $2\le j = k-1$ slightly differently.
We therefore distinguish two cases:
\begin{itemize}
\item Case 1: Either $j\le k-2$ or $j=k-1=1$.
\item Case 2: $2\le j = k-1$.
\end{itemize}
 Correspondingly, we split the lower bound we aim to prove into two lemmas.
In Case~1, we need to prove the following.

\begin{lemma}\label{lem:lowerbound-subcritical:case1}
	Let $k,j\in\NN$ satisfy $1 \leq j \leq k-1$, and additionally
	either $j\le k-2$ or $j=k-1=1$. Let $a\in\NN$ be the unique integer satisfying
	$1 \leq a \leq k-j$ and $a \equiv k \bmod (k-j)$.
	Let $\eps = \eps(n) \ll 1$ satisfy $\eps^3 n \xrightarrow{n\to\infty} \infty$
	and let
	$$
	p= \frac{1-\eps}{\binom{k-j}{a} \binom{n-j}{k-j}}.
	$$
	Let $L$ be the length of the longest $j$-tight path in $H^k(n,p)$.
Then \whp\
\[
L \geq \frac{j \ln n- \omega + 3 \ln \eps}{-\ln(1-\eps)},
\]
for any $\omega=\omega(n)$ such that $\omega\xrightarrow{n\to\infty} \infty$.
\end{lemma}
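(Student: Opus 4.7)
The plan is the second moment method applied to $\hat X_\ell$ at
$\ell := \lfloor (j\ln n - \omega + 3\ln\eps)/(-\ln(1-\eps))\rfloor$. I would show $\mathbb{E}(\hat X_\ell)\to\infty$, bound $\mathbb{E}(\hat X_\ell^2) \leq (1+o(1))\mathbb{E}(\hat X_\ell)^2$, and apply Chebyshev's inequality to conclude $\hat X_\ell \geq 1$ whp---forcing the existence of a $j$-tight path of length $\ell$.

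The first moment is a direct computation. Substituting $p = (1-\eps)\,a!\,b!/(n-j)_{k-j}$ into Corollary~\ref{cor:expectationupperbound} and factoring $(n)_v = (n)_j\,(n-j)_{(k-j)\ell}$, the ratio $(n-j)_{(k-j)\ell}/(n-j)_{k-j}^{\ell}$ is $\exp(O(\ell^2/n)) = 1+o(1)$, since $\ell = O(\ln n/\eps)$ and $\eps^3 n \to \infty$ together force $\ell^2 = o(n)$. Hence $\mathbb{E}(\hat X_\ell) = \Theta(n^j (1-\eps)^\ell) = \Theta(\eps^{-3} e^\omega) \to \infty$. The $\eps^{-3}$ factor, which originates from the $3\ln\eps$ in the definition of $\ell$, will turn out to be precisely the slack needed to absorb overlap contributions in the second moment.

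For the second moment, the goal is to show
\[
\mathbb{E}(\hat X_\ell^2) - \mathbb{E}(\hat X_\ell)^2 \;\le\; \sum_{i=1}^{\ell} N_i\, p^{2\ell - i} \;=\; o\bigl(\mathbb{E}(\hat X_\ell)^2\bigr),
\]
where $N_i$ counts ordered pairs of equivalence classes in $\hat\cP_\ell$ sharing exactly $i$ edges. In Case~1, the common edges decompose into $t \geq 1$ vertex-disjoint sub-$j$-tight-paths, together spanning $(k-j)i + tj$ common vertices. For each $t$ I would enumerate intersection ``skeletons'' (the positions of the sub-paths inside $P_1$ and inside $P_2$, and their lengths), bound the number of labelled pairs consistent with a skeleton by $N_{\mathrm{total}}\cdot n^{v - (k-j)i - tj}$, and absorb the $z_\ell$ symmetry factors from Lemma~\ref{lem:isomorphisms}. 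Using $n^{k-j} p = \Theta(1)$, a single skeleton contributes $O(n^{-tj}(1-\eps)^{-i})$ to the ratio $N_i\,p^{2\ell - i}/\mathbb{E}(\hat X_\ell)^2$. For $t=1$ there are $O((\ell-i+1)^2)$ skeletons of sub-path length $i$, and summing
\[
\sum_{i=1}^{\ell}(\ell - i + 1)^2 (1-\eps)^{-i} \;=\; O\!\left((1-\eps)^{-\ell}/\eps^3\right),
\]
which follows from $\sum_{r \ge 1} r^2 x^r = x(1+x)/(1-x)^3$, and using the identity $(1-\eps)^{-\ell} = \Theta(n^j\eps^3 e^{-\omega})$, the total $t = 1$ contribution is $O(e^{-\omega}) = o(1)$. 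For $t \geq 2$ an extra factor $n^{-(t-1)j}$ appears, making those terms geometrically smaller, so summing over $t$ gives $o(1)$ as required.

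The hardest part will be the clean implementation of the skeleton enumeration and its bookkeeping: tracking the $z_\ell$ symmetry factors from Lemma~\ref{lem:isomorphisms} so that they cancel exactly between $N_i$ and $N_{\mathrm{total}}^2$, handling the edge effects that arise when a sub-path touches the head or tail of $P_1$ or $P_2$, and verifying rigorously the $O(n^{-tj})$ gain per sub-path (which in turn requires that sub-paths in the intersection are genuinely vertex-disjoint in both $P_1$ and $P_2$). The assumption of Case~1 (either $j \leq k-2$, or $k=2$) enters precisely in this decomposition: for tight paths with $2 \leq j = k-1$, two paths sharing even a single edge are forced to share a much richer common structure, the clean decomposition into vertex-disjoint sub-paths breaks down, and a separate treatment (Case~2) is needed. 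Once the second moment bound is in hand, Chebyshev's inequality yields $\Pr(\hat X_\ell = 0) = o(1)$, which completes the proof.
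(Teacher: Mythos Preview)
Your overall plan---second moment on the path count, decomposition of the common edges into maximal intervals, summing over the number~$t$ of intervals---is exactly the paper's approach, and the $t=1$ computation you sketch is correct and matches the paper's. But your handling of $t\ge 2$ has a real gap.

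You assert that in Case~1 the intervals are \emph{vertex-disjoint} sub-paths, spanning $(k-j)i+tj$ common vertices, and that this is where the Case~1/Case~2 distinction enters. That is false. Take for instance $k=5,\ j=3$ (which is in Case~1): two consecutive intervals on~$B$ separated by a single edge have end-edges $e_m$ and $e_{m+2}$, which share $|e_m\cap e_{m+2}|=\max\{2j-k,0\}=1$ vertex. So intervals can overlap in vertices whenever $j>k/2$, and your claimed gain of $n^{-j}$ per extra interval is not justified. The correct lower bound on the number of common vertices is the paper's
\[
T(r)=(k-j)q+j+(r-1)\min\{j,k-j\},
\]
giving only a gain of $n^{-\min\{j,k-j\}}$ per additional interval. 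The true reason Case~2 is split off is not a failure of vertex-disjointness but that when $2\le j=k-1$ one has $\min\{j,k-j\}=1$, and the ratio of consecutive $r$-terms is then $O(\ell^3/n)=O((\ln n)^3/(\eps^3 n))$, which need not tend to~$0$ under the hypothesis $\eps^3 n\to\infty$ alone. In Case~1 either $\min\{j,k-j\}\ge 2$ (when $2\le j\le k-2$), or $j=1$ so that $\ell^3/n=O((\ln(\eps^3 n))^3/(\eps^3 n))\to 0$; either way the $r\ge 2$ terms are geometrically dominated by $r=1$. Once you replace $j$ by $\min\{j,k-j\}$ in your per-interval gain and redo the geometric comparison, your argument goes through and coincides with the paper's.
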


On the other hand, in Case~2 we have $k-j=1$, and therefore
the parameter $a$ from Theorem~\ref{thm:mainresult} is simply $1$.
Thus also $\binom{k-j}{a}=1$ and $p_0=\frac{1}{n-k+1}$,
and so the lower bound in Theorem~\ref{thm:mainresult}~\ref{item:thm-subcritical}
simplifies to the following.

\begin{lemma}\label{lem:lowerbound-subcritical:case2}
	Let $k,j\in\NN$ satisfy $2 \leq j = k-1$.
	Let $\eps = \eps(n) \ll 1$ satisfy $\eps^3 n \xrightarrow{n\to\infty} \infty$
	and let
	$$
	p= \frac{1-\eps}{n-k+1}.
	$$
	Let $L$ be the length of the longest $j$-tight path in $H^k(n,p)$.
Then \whp\
\[
L \geq \frac{j \ln n- \omega + 3 \ln \eps}{-\ln(1-\eps)},
\]
for any $\omega=\omega(n)$ such that $\omega\xrightarrow{n\to\infty} \infty$.
\end{lemma}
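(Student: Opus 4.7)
The plan is to apply the Paley--Zygmund (second moment) inequality to $\hat{X}_\ell$, with $\ell$ set to the integer closest to $(j\ln n - \omega + 3\ln \eps)/(-\ln(1-\eps))$. Since $k - j = 1$ forces $a = 1$ and $b = 0$ in~\eqref{eq:def:a}--\eqref{eq:def:b}, Lemma~\ref{lem:isomorphisms} gives $z_\ell = \Theta(1)$. Combined with Corollary~\ref{cor:expectationupperbound} and the fact that $\ell = o(n)$ (since $\eps^3 n \to \infty$), this yields
\[ \EE(\hat X_\ell) = \Theta\!\left((n)_{\ell+j}\,p^\ell\right) = \Theta\!\left(n^j (1-\eps)^\ell\right) = \Theta\!\left(\eps^{-3} e^\omega\right),\]
which tends to infinity with $\omega$. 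Hence it is enough to show $\EE(\hat X_\ell^2) \le (1 + o(1))\,\EE(\hat X_\ell)^2$.

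Grouping pairs of equivalence classes by the number $i$ of edges shared, one obtains
\[ \frac{\EE(\hat X_\ell^2)}{\EE(\hat X_\ell)^2} = \frac{1}{|\hat{\cP}_\ell|}\sum_{i=0}^{\ell} N_i\,p^{-i}, \]
where $N_i$ is the number of equivalence classes sharing exactly $i$ edges with a fixed $\hat P \in \hat{\cP}_\ell$ (independent of $\hat P$ by the symmetry of $K^{(k)}_n$). The $i = 0$ term contributes $1 + o(1)$, so the main task is to show that the tail is $o(1)$.

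To bound $N_i$, I would exploit the rigidity of tight paths: consecutive edges overlap in $k-1$ vertices in a fixed linear order, so the $i$ shared edges split into $r \ge 1$ maximal ``blocks'' of edges consecutive in $\hat P$, each of which (being itself a tight subpath on $m_s + j$ vertices with $m_s \ge 1$) must also appear as a contiguous tight subpath of $\hat Q$ in one of two orientations. Parameterising by $r$ and the composition $i = m_1 + \cdots + m_r$, $N_i$ is bounded by the product of (i) block placements in $\hat P$ and $\hat Q$, each $O\!\left(\binom{\ell - i + 1}{r}\right)$, (ii) $2^r$ orientation choices, (iii) $\binom{i-1}{r-1}$ compositions, and (iv) roughly $n^{\ell + j - i - rj}$ ways to pick the remaining vertices of $\hat Q$. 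Substituting $p = \Theta(n^{-1})$ and $|\hat{\cP}_\ell| = \Theta(n^{\ell + j})$, the $(i,r)$-contribution to the ratio becomes $\Theta\!\left(\binom{\ell-i+1}{r}^2\binom{i-1}{r-1} 2^r n^{-rj}(1-\eps)^{-i}\right)$. Summing over $i$ for $r = 1$ via the standard identity $\sum_{t \ge 0}(t+1)^2 (1-\eps)^t = \Theta(\eps^{-3})$ yields a contribution of order $n^{-j}(1-\eps)^{-\ell}\eps^{-3} = \Theta(e^{-\omega})$; for $r \ge 2$ each contribution carries an additional factor of at most $O(\ell^3 / n^j)$, which is $o(1)$ by $\eps^3 n \to \infty$ and $\ell = O(\ln n / \eps)$. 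The total sum is therefore $o(1)$, as required.

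The primary obstacle will be the combinatorial bookkeeping for $N_i$: one has to control the placements of the shared blocks in both $\hat P$ and $\hat Q$ simultaneously, handle boundary effects when blocks lie near the endpoints of $\hat P$ or are close to one another (so that different blocks might share vertices), and ensure that the resulting estimates are uniform in $(i,r)$. The peculiar $3\ln\eps$ correction in the target length is dictated precisely by the $\eps^{-3}$ factor arising from the dominant $r = 1$ contribution to the second moment: $\ell$ is chosen just small enough so that $\EE(\hat X_\ell)$ absorbs this factor.
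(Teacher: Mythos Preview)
Your overall strategy mirrors the paper's, but there is a genuine gap in step (iv) of your bound on $N_i$: you claim ``roughly $n^{\ell+j-i-rj}$ ways to pick the remaining vertices of $\hat Q$''. This presumes that the $r$ shared blocks together occupy $i+rj$ vertices, i.e.\ that they are pairwise vertex-disjoint. In the tight case $j=k-1\ge 2$ this fails badly: two blocks separated by a single edge (which is perfectly possible, since the intervening edge need not be common to both paths) share $|e_t\cap e_{t+2}|=k-2=j-1$ vertices. Thus the number of vertices fixed by the blocks can be as small as $i+j+(r-1)$, not $i+rj$, and the correct contribution to the ratio carries only $n^{-(j+r-1)}$ rather than $n^{-rj}$.

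With the correct exponent, passing from $r$ to $r+1$ costs a factor $O(\ell^3/n)$, not $O(\ell^3/n^j)$. Since $\ell=O((\ln n)/\eps)$ and only $\eps^3 n\to\infty$ is assumed, $(\ln n)^3/(\eps^3 n)$ need not tend to zero, so the sum over $r\ge 2$ is not controlled by your argument. This is precisely the obstruction that distinguishes Case~2 from Case~1 (see the paper's footnote in the proof of Proposition~\ref{prop:doublesum}). The paper resolves it by introducing an additional parameter $r_1$ counting how many pairs of consecutive blocks are ``adjacent'' (one edge apart): adjacent blocks lose $j-1$ fixed vertices but, crucially, also lose the freedom to choose where the second block sits, saving a factor of $\ell$ in the placement count. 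Balancing these via a refined $T(r_1,r_2)$ recovers a workable ratio. Your sketch flags nearby blocks as a bookkeeping nuisance, but in fact handling them is the heart of the proof in this case.
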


Since the main ideas in the proofs of these two lemmas are essentially identical, we will
treat only Case~1 (i.e.\ Lemma~\ref{lem:lowerbound-subcritical:case1}) here
and address Case~2 (i.e.\ Lemma~\ref{lem:lowerbound-subcritical:case2}) in Appendix~\ref{app:secondmomentcase2}.

\subsection{Case 1: Either $j\le k-2$ or $j=k-1=1$}
We will prove Lemma~\ref{lem:lowerbound-subcritical:case1} with the help
of various auxiliary results. Since these results are rather technical in nature,
we also defer their proofs to Appendix~\ref{app:secondmomentcase1}.

Let us set $\ell=\frac{j \ln n- \omega + 3 \ln \eps}{-\ln(1-\eps)}$.

Recall that $\cP_{\ell}$ is the set of all $j$-tight paths
of length $\ell$ in $K^{(k)}_n$,
and therefore
$$
\EE(X_\ell^2)=\sum_{A,B\in \cP_\ell} \Pr(A,B\subset H^k(n,p)).
$$
The probability term in the sum is fundamentally dependent
on how many edges the paths $A$ and $B$ share, so we will
need to calculate the number of pairs of possible paths
with given intersections.

For any $A,B\in\cP_{\ell}$,
let $Q(A,B)$ be the set of common edges of $A$ and $B$ and define $q(A,B):=|Q(A,B)|$.
Observe that there is a natural partition of $Q(A,B)$ into
\emph{intervals}, where each interval is a maximal set
of edges in $Q(A,B)$ which are consecutive along
both $A$ and $B$.
Let $r(A,B)$ be the number of intervals in this natural partition of $Q(A,B)$.
Set $\mathbf{c}(A,B):=(c_1,\ldots,c_r)$,
where $c_1 \geq \dotsb \geq c_r \geq 1$, to be the lengths (i.e.\ the number of edges)
of these intervals.
Given non-negative integers $q,r$ and an $r$-tuple $\mathbf{c}=(c_1,\ldots,c_r)$
such that $c_1\geq\dotsb\geq c_r\geq 1$ and $c_1+\dotsb+c_r=q$, define
\begin{align*}
\cP_{\ell}^2(q,r,\mathbf{c}):=\{(A,B)\in\cP_{\ell}^2:\ q(A,B) & =q, \\
r(A,B)& =r, \\
\mathbf{c}(A,B) & =\mathbf{c}\}.
\end{align*}

For any $q,r,\mathbf{c}$ not satisfying these conditions, $\cP_\ell^2(q,r,\mathbf{c})$
is empty. Recall from~\eqref{eq:def:v} that $v=(k-j)\ell+j$ is the number of vertices in a $j$-tight path of length~$\ell$. 
\begin{claim}\label{claim:expectationfirsttermsplit}
\begin{equation}\label{eqn:expectationfirsttermsplit}
\EE(X_\ell^2) \le \left((n)_v\right)^2p^{2\ell} + \sum_{q\ge 1}\sum_{r\ge 1}\sum_{\mathbf{c}}|\cP_\ell^2(q,r,\mathbf{c})|p^{2\ell-q}.
\end{equation}
\end{claim}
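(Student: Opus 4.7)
The plan is to expand $\EE(X_\ell^2)$ as a double sum over pairs of potential paths and bound it by splitting on how many edges are shared. Starting from
\[
\EE(X_\ell^2) = \sum_{A,B\in\cP_\ell}\Pr\bigl(A,B\subset H^k(n,p)\bigr),
\]
I would use independence of edges in $H^k(n,p)$ to write $\Pr(A,B\subset H^k(n,p)) = p^{|E(A)\cup E(B)|} = p^{2\ell - q(A,B)}$, since each of $A$ and $B$ has exactly $\ell$ edges and $q(A,B)$ of them are common to both.

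Next, I would split the sum according to whether $q(A,B)=0$ or $q(A,B)\ge 1$. For pairs with no shared edges, every such pair contributes $p^{2\ell}$, and the number of pairs is bounded crudely by $|\cP_\ell|^2$. To evaluate $|\cP_\ell|$, I would observe that a $j$-tight path of length $\ell$ is specified by an ordered sequence of $v=(k-j)\ell+j$ distinct vertices of $[n]$, and that any such sequence conversely yields a valid $j$-tight path: since consecutive edges overlap in fewer than $k$ vertices, the edges $e_1,\ldots,e_\ell$ determined by a sequence of distinct vertices are automatically all distinct. Hence $|\cP_\ell|=(n)_v$, giving the first summand $((n)_v)^2 p^{2\ell}$.

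For pairs with $q(A,B)\ge 1$, the triple $(q(A,B),r(A,B),\mathbf{c}(A,B))$ is well-defined, and the sets $\cP_\ell^2(q,r,\mathbf{c})$ partition this collection of pairs as $(q,r,\mathbf{c})$ ranges over all admissible values (with $q\ge 1$, $1\le r\le q$, and $\mathbf{c}=(c_1,\ldots,c_r)$ a weakly decreasing composition of $q$). Each pair in $\cP_\ell^2(q,r,\mathbf{c})$ contributes $p^{2\ell - q}$ to the sum, producing exactly the second term in the claimed bound.

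There is no significant obstacle to this particular claim: the proof is essentially a careful reorganisation of the second-moment sum. The only points that require a brief verification are the identity $|\cP_\ell|=(n)_v$ and the fact that the classes $\cP_\ell^2(q,r,\mathbf{c})$ genuinely partition the pairs with $q(A,B)\ge 1$; both follow immediately from the definitions. The substantive work will come afterwards, when one must obtain usable upper bounds on $|\cP_\ell^2(q,r,\mathbf{c})|$ uniformly in $(q,r,\mathbf{c})$ in order to push the second-moment calculation through.
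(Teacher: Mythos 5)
Your proposal is correct and matches the paper's own argument: both expand $\EE(X_\ell^2)$ over ordered pairs of potential paths, use edge-independence to get the factor $p^{2\ell-q(A,B)}$, isolate the $q=0$ pairs and bound their number by $((n)_v)^2$, and partition the remaining pairs into the classes $\cP_\ell^2(q,r,\mathbf{c})$ with $q,r\ge 1$. The extra verification you include (that $|\cP_\ell|=(n)_v$) is a harmless refinement of the bound the paper states directly.
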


Thus we need to estimate $|\cP_{\ell}^2(q,r,\mathbf{c})|$ for $q,r\ge 1$.
Given $q,r\ge 1$, we define the parameter
\[
T(r)=T_q(r):= 
(k-j)q+j+(r-1)\min\{j,k-j\}.
\]
This slightly arbitrary-looking expression is in fact a lower bound on
the number of vertices in $Q(A,B)$, as will become clear in the proof.
We obtain the following.

\begin{proposition}\label{prop:pl2qrcbound}
There exists a constant $C>0$ such that for any $q\ge 1$ we have
\begin{align*}
|\cP_\ell^2(q,r,\mathbf{c})| & \le \left((n)_{v}\right)^2 \frac{(\ell-q+1)^2 \ell^{2(r-1)}(a!b!)^q C^r}{(n-v)^{T(r)}}.
\end{align*}
\end{proposition}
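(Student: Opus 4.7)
The plan is to bound $|\cP_\ell^2(q,r,\mathbf{c})|$ by constructing each ordered pair $(A,B)$ in stages and multiplying the number of choices at each stage. I first choose the vertex sequence of $A$: this gives $(n)_v$ options. Next, I fix the positions of the $r$ shared intervals in $A$'s edge sequence: assigning a left-to-right ordering of the lengths $c_1,\ldots,c_r$ contributes a factor $r!/\prod_c m_c!$ (where $m_c$ denotes the multiplicity of the value $c$ in $\mathbf{c}$), and a standard stars-and-bars argument, taking into account the mandatory gap of at least one non-shared edge between consecutive intervals, gives $\binom{\ell-q+1}{r}$ compatible starting-position vectors. The same analysis applies to $B$'s edge sequence, and finally matching the intervals of $A$ with those of $B$ respecting length contributes $\prod_c m_c!$. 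Combining these factors, and using $\binom{x}{r}\le x^r/r!$, I obtain a total bound of
\[
\binom{\ell-q+1}{r}^2 \frac{(r!)^2}{\prod_c m_c!} \le (\ell-q+1)^{2r} \le (\ell-q+1)^2\ell^{2(r-1)},
\]
where the crucial point is the cancellation between the $(r!)^2$ from the ordering steps and the matching factor $\prod_c m_c!$.

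With these data fixed, the edges within each shared interval are determined, but the ordering of $B$'s vertices within each interval can still vary. By (the proof of) Lemma~\ref{lem:isomorphisms}, the number of orderings of the $(k-j)c_i+j$ vertices of an interval that give the same edge set is at most $C_3(a!b!)^{c_i}$ for some constant $C_3=C_3(k,j)$, which absorbs the small-$c_i$ cases and boundary effects as well as the two possible interval orientations. Taking the product over the $r$ intervals contributes at most $C_3^r(a!b!)^q$. Finally, the ``new'' vertices of $B$, that is, those not in $V(Q(A,B))$, form an ordered tuple of distinct elements of $[n]\setminus V(Q(A,B))$, giving at most $(n-t)_{v-t}$ choices, where $t=|V(Q(A,B))|$.

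The main technical step is the lower bound $|V(Q(A,B))|\ge T(r)$. Since $V(Q(A,B))=\bigcup_i V(I_i)$ and each $V(I_i)$ corresponds to a contiguous sub-interval of $A$'s vertex sequence of length $(k-j)c_i+j$, the size of the union equals the sum of the individual sizes minus the pairwise overlaps of adjacent sub-intervals. A direct calculation shows that two consecutive intervals in $A$ separated by $g\ge 1$ non-shared edges have vertex-overlap of size at most $\max\{0,k-(g+1)(k-j)\}\le \max\{0,2j-k\}=j-\min\{j,k-j\}$, so
\[
|V(Q(A,B))|\ge (k-j)q + rj - (r-1)(j-\min\{j,k-j\})=T(r).
\]
Combined with the elementary inequality $(n-t)_{v-t}\le (n)_v/(n-v)^t$ valid for $0\le t\le v$ (a short falling-factorial computation), this gives at most $(n)_v/(n-v)^{T(r)}$ choices for the new vertices of $B$. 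Multiplying all stages together and setting $C=C_3$ yields the claimed bound. The most delicate part of the argument, and the main obstacle, is verifying the lower bound $|V(Q(A,B))|\ge T(r)$, which requires careful identification of the worst-case configuration of the vertex sub-intervals along $A$.
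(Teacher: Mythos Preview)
Your proof is correct and follows essentially the same strategy as the paper's: construct each pair $(A,B)$ in stages (embed $A$, place and match the $r$ shared intervals along $A$ and $B$, reorder vertices within intervals, embed the remaining vertices of $B$), with the key technical step being the lower bound $|V(Q(A,B))|\ge T(r)$ via the consecutive--interval overlap calculation. The only differences are presentational: you count interval placements via stars-and-bars with multiplicity bookkeeping (giving $\binom{\ell-q+1}{r}^2(r!)^2/\prod_c m_c!$), whereas the paper selects the intervals sequentially (giving $\prod_i 2(\ell-c_1-\cdots-c_i+1)^2$); and you absorb the orientation factor $2^r$ into the $z_{c_i}=\Theta((a!b!)^{c_i})$ bound, whereas the paper keeps it separate. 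Both routes land on the same product $(\ell-q+1)^2\ell^{2(r-1)}(a!b!)^q C^r$.
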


Proposition~\ref{prop:pl2qrcbound} together with~\eqref{eqn:expectationfirsttermsplit} gives the
following immediate corollary.

\begin{corollary}\label{cor:squareexp}
There exists a constant $C>0$ such that
\begin{equation}\label{eqn:squareexp}
\EE(X_\ell^2) 
\leq \left((n)_{v}\right)^2p^{2\ell}\left(1+ \sum_{q=1}^{\ell}\ \sum_{r=1}^{q}\ \sum_{\substack{c_1+\dotsb+c_r=q \\ c_1\geq\dotsb\geq c_r\geq 1}}\frac{(\ell-q+1)^2\ell^{2(r-1)} (a!b!)^qC^r}{p^q (n-v)^{T(r)}}\right).
\end{equation}
\end{corollary}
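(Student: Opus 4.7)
The proof of this corollary should be essentially immediate: the statement is flagged as a ``corollary'' derived by combining~\eqref{eqn:expectationfirsttermsplit} with Proposition~\ref{prop:pl2qrcbound}. My plan is therefore simply to substitute the upper bound on $|\cP_\ell^2(q,r,\mathbf{c})|$ provided by the proposition into each summand of~\eqref{eqn:expectationfirsttermsplit}, factor out the common term $\left((n)_v\right)^2 p^{2\ell}$, and truncate the summation ranges to match the form claimed.

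More explicitly, starting from
\[
\EE(X_\ell^2) \le \left((n)_v\right)^2p^{2\ell} + \sum_{q\ge 1}\sum_{r\ge 1}\sum_{\mathbf{c}}|\cP_\ell^2(q,r,\mathbf{c})|p^{2\ell-q},
\]
I would apply Proposition~\ref{prop:pl2qrcbound} to each term in the triple sum and then pull the factor $\left((n)_v\right)^2 p^{2\ell}$ outside the summation by writing $p^{2\ell-q} = p^{2\ell}/p^q$. This yields precisely the expression on the right-hand side of~\eqref{eqn:squareexp}, except for the ranges of summation.

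To justify truncating the sums, I would note: a $j$-tight path in $H^k(n,p)$ of length $\ell$ has exactly $\ell$ edges, so any two such paths can share at most $\ell$ edges, hence $q\le\ell$; each of the $r$ intervals contributes at least one edge of $Q(A,B)$, so $r\le q$; and the composition constraint $c_1 \ge \dotsb \ge c_r \ge 1$ with $c_1 + \dotsb + c_r = q$ is built into the definition of $\cP_\ell^2(q,r,\mathbf{c})$, so the contribution from any tuple $\mathbf{c}$ failing this constraint is zero and can be dropped. With these observations the inner sums are restricted to the ranges $1\le q \le \ell$, $1 \le r \le q$, and partitions $\mathbf{c}$ of $q$, yielding exactly~\eqref{eqn:squareexp}.

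There is no real obstacle here; the entire proof is mechanical algebraic manipulation, and the substantive work has already been done in establishing Proposition~\ref{prop:pl2qrcbound} and Claim~\ref{claim:expectationfirsttermsplit}. If anything warrants care, it is just keeping the constant $C$ from Proposition~\ref{prop:pl2qrcbound} consistent when passing to the corollary (the same $C$ can be used unchanged), and noting that the isolated term $\left((n)_v\right)^2 p^{2\ell}$ in~\eqref{eqn:expectationfirsttermsplit} corresponds to the ``$1$'' inside the parentheses on the right-hand side of~\eqref{eqn:squareexp} after factoring.
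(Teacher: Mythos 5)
Your proposal is correct and is exactly the derivation the paper intends: it presents the corollary as an immediate consequence of substituting Proposition~\ref{prop:pl2qrcbound} into Claim~\ref{claim:expectationfirsttermsplit}, factoring out $\left((n)_v\right)^2p^{2\ell}$, and restricting to the ranges where $\cP_\ell^2(q,r,\mathbf{c})$ is nonempty (which the paper itself notes explicitly just before Claim~\ref{claim:expectationfirsttermsplit}). No gaps.
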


We bound the triple-sum using the following two results.

\begin{proposition}\label{prop:doublesum}
\begin{equation}\label{eq:doublesum}
\sum_{r=1}^{q}\ \sum_{\substack{c_1+\dotsb+c_r=q \\ c_1\geq\dotsb\geq c_r\geq 1}}\frac{(\ell-q+1)^2\ell^{2(r-1)} (a!b!)^qC^r}{p^q (n-v)^{T(r)}}
=
O\left(n^{-j}\right) \frac{(\ell-q+1)^2}{(1-\eps)^q}.
\end{equation}
\end{proposition}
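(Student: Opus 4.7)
The plan is to substitute the explicit value $p = \frac{(1-\eps)\,a!\,b!}{(n-j)_{k-j}}$ from Lemma~\ref{lem:lowerbound-subcritical:case1}, which gives the clean identity
\[
\frac{(a!b!)^q}{p^q} = \frac{((n-j)_{k-j})^q}{(1-\eps)^q}.
\]
This immediately accounts for the $(1-\eps)^{-q}$ factor on the right-hand side, so it remains to show that the rest of the triple sum is bounded above by $O(n^{-j})$ uniformly in $q$.

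Set $m := \min\{j, k-j\}$, so that $T(r) = (k-j)q + j + (r-1)m$. Under the hypothesis $\eps^3 n \to \infty$, the number of vertices $v = (k-j)\ell+j$ in a $j$-tight path of length $\ell$ satisfies $v = O(\ln n/\eps) = o(n)$, and in fact $vT(r)/n = o(1)$ uniformly for $r \le q \le \ell$. Hence $(n-v)^{T(r)} = n^{T(r)}(1+o(1))$, and therefore
\[
\frac{((n-j)_{k-j})^q}{(n-v)^{T(r)}} = \Theta\bigl(n^{-j-(r-1)m}\bigr).
\]
The summand depends on the partition $\mathbf{c}$ only through its length $r$, so the innermost sum simply multiplies by the number of partitions of $q$ into $r$ positive parts, which is bounded above by the number of compositions $\binom{q-1}{r-1}$.

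Collecting these estimates reduces the left-hand side, up to absolute constants, to
\[
\frac{(\ell-q+1)^2}{n^j(1-\eps)^q} \cdot C \sum_{r=1}^q \binom{q-1}{r-1}\left(\frac{C\ell^2}{n^m}\right)^{r-1} = \frac{(\ell-q+1)^2}{n^j(1-\eps)^q} \cdot C \left(1 + \frac{C\ell^2}{n^m}\right)^{q-1},
\]
by the binomial theorem. Thus the proposition reduces to showing that $\bigl(1 + C\ell^2/n^m\bigr)^{q-1} = O(1)$ uniformly in $q \le \ell$.

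The principal obstacle is this last uniform bound. Since $q \le \ell = O(\ln n/\eps)$, it suffices to verify $\ell^3 = O(n^m)$. When $m \ge 2$ this is immediate from $\eps^3 n \to \infty$, since then $\ell^3/n^m \le O((\ln n)^3/(\eps^3 n^2)) = o(1)$. The delicate sub-case is $m = 1$, corresponding to loose paths ($j=1$ with $k \ge 3$), where one must exploit the extra $3\ln\eps$ term in the definition of $\ell$---and perhaps sharpen Proposition~\ref{prop:pl2qrcbound} itself for large~$q$---to push $\ell^3 = o(n)$ through.
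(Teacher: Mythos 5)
Your argument is, in substance, the paper's own: both proofs reduce the triple sum to the $r=1$ term by (i) noting the summand depends on $\mathbf{c}$ only through $r$, (ii) bounding the number of partitions (the paper via the recursion $y_q(r+1)\le q\,y_q(r)$, you via compositions $\binom{q-1}{r-1}$), and (iii) observing that each extra interval costs a factor of order $\ell^2 q/n^{m}$ with $m=\min\{j,k-j\}=T(r+1)-T(r)$. Your binomial-theorem packaging versus the paper's consecutive-ratio/geometric-series packaging is a cosmetic difference. The identity $(a!b!)^q/p^q=((n-j)_{k-j})^q/(1-\eps)^q$ and the estimate $(n-v)^{T(r)}=(1+o(1))n^{T(r)}$ are both correct under the stated hypotheses.

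The one step you leave open --- the case $m=1$, i.e.\ $j=1$ --- is not actually delicate and requires no sharpening of Proposition~\ref{prop:pl2qrcbound}. The resolution is exactly the mechanism you name: with $\ell=\frac{j\ln n-\omega+3\ln\eps}{-\ln(1-\eps)}$ and $j=1$ one has $\ell\le \frac{\ln(\eps^3 n)}{\eps}=\frac{\ln\lambda}{\eps}$ where $\lambda:=\eps^3 n\to\infty$, hence
\[
\frac{\ell^3}{n}\;\le\;\frac{(\ln\lambda)^3}{\eps^3 n}\;=\;\frac{(\ln\lambda)^3}{\lambda}\;=\;o(1),
\]
which gives $(1+C\ell^2/n)^{q-1}\le\exp(C\ell^3/n)=1+o(1)$ uniformly in $q\le\ell$. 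This is precisely the computation the paper performs (it obtains the ratio $O((\ln\lambda)^3/\lambda)=O(\lambda^{-1/2})$ for $j=1$), and it is the sole place in the proposition where the $3\ln\eps$ term is needed. Had you carried out this one line, the proof would be complete; note also that the genuinely problematic configuration, where the argument does fail, is $2\le j=k-1$ (then $m=1$ but $\ell=\Theta(\ln n/\eps)$ genuinely), which is excluded here and handled separately in the paper as Case~2.
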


\begin{claim}\label{claim:roneterm}
\begin{align}
\sum_{q=1}^{\ell}\frac{(\ell-q+1)^2}{(1-\eps)^q} 
& = \frac{2(1-\eps)^{-\ell}}{\eps^3}.\label{eqn:roneterm}
\end{align}
\end{claim}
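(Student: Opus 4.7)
The plan is to recognise this truncated sum as essentially a value of a classical generating function, after an index shift. First, I would substitute $m := \ell - q + 1$, so that as $q$ ranges over $1, \ldots, \ell$, $m$ ranges over $\ell, \ldots, 1$, and $(1-\eps)^{-q} = (1-\eps)^{m-\ell-1}$. This rewrites the sum as
\[
\sum_{q=1}^{\ell}\frac{(\ell-q+1)^2}{(1-\eps)^q} \;=\; (1-\eps)^{-\ell-1}\sum_{m=1}^{\ell} m^2 (1-\eps)^m.
\]

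Next, I would invoke the standard generating function identity
\[
\sum_{m=1}^{\infty} m^2 x^m \;=\; \frac{x(1+x)}{(1-x)^3} \qquad (|x|<1),
\]
obtained by differentiating the geometric series $\sum_{m\ge 0} x^m = 1/(1-x)$ twice (with appropriate algebraic manipulation). Setting $x = 1-\eps$ yields $\sum_{m=1}^{\infty} m^2 (1-\eps)^m = (1-\eps)(2-\eps)/\eps^3$. Plugging this back in and simplifying gives the main term
\[
(1-\eps)^{-\ell-1} \cdot \frac{(1-\eps)(2-\eps)}{\eps^3} \;=\; \frac{(2-\eps)(1-\eps)^{-\ell}}{\eps^3} \;\le\; \frac{2(1-\eps)^{-\ell}}{\eps^3},
\]
which matches the claim.

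The one genuinely necessary subtlety is to justify replacing the truncated sum by its infinite version; this is where the hypothesis on $\ell$ comes in. Since $\ell = (j\ln n - \omega + 3\ln\eps)/(-\ln(1-\eps))$, we have $(1-\eps)^\ell = e^{-\omega}\eps^3 n^{-j}$, which tends to $0$ much faster than any polynomial in $\ell$ and $1/\eps$ grows. Consequently, a crude bound such as $\sum_{m>\ell} m^2 (1-\eps)^m \le \ell^2 (1-\eps)^{\ell+1}/\eps$ (which itself follows by factoring out $(1-\eps)^{\ell+1}$ and comparing with a geometric tail) shows the discarded tail is a $o(1)$ fraction of the main term $(1-\eps)(2-\eps)/\eps^3$. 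Substituting and absorbing the $(1-o(1))$ factor into the inequality $2-\eps \le 2$ yields the stated bound. Beyond this tail estimate, no step is more than a routine calculation, and I do not expect any substantive obstacle.
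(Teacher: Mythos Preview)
Your proof is correct and follows essentially the same approach as the paper: an index shift followed by extension to an infinite series and evaluation via a standard generating-function identity (the paper uses $(i+1)(i+2)\ge (i+1)^2$ and recognises the second derivative of $1/\eps$, arriving at exactly $2/\eps^3$, whereas you use $\sum m^2 x^m$ and bound $2-\eps\le 2$). Note that your tail discussion is unnecessary: since all terms are non-negative, extending the finite sum to the infinite one is automatically an upper bound, which is all the claim requires.
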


Substituting~\eqref{eq:doublesum} and~\eqref{eqn:roneterm} into~\eqref{eqn:squareexp},
using the fact that $\ell=\frac{j\ln n-\omega + 3 \ln \eps}{-\ln(1-\eps)}$ and performing some elementary
approximations leads to the following.

\begin{claim}\label{claim:secondmoment}
$\EE(X_\ell^2) =  \left((n)_{v}\right)^2p^{2\ell}(1+o(1))$.
\end{claim}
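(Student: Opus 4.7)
The plan is to combine the three preceding results in a single bookkeeping computation. Starting from Corollary~\ref{cor:squareexp}, the factor $((n)_v)^2 p^{2\ell}$ is already isolated outside the sum, so it suffices to prove that the triple sum inside the parentheses is $o(1)$. Proposition~\ref{prop:doublesum} collapses the two inner sums (over $r$ and $\mathbf{c}$) into $O(n^{-j})\frac{(\ell-q+1)^2}{(1-\eps)^q}$, and Claim~\ref{claim:roneterm} then evaluates the remaining sum over $q$. Putting the two together, the triple sum is bounded by
\[
O(n^{-j}) \sum_{q=1}^{\ell}\frac{(\ell-q+1)^2}{(1-\eps)^q} = O(n^{-j}) \cdot \frac{2(1-\eps)^{-\ell}}{\eps^3} = O\!\left(\frac{(1-\eps)^{-\ell}}{\eps^3 n^{j}}\right).
\]

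The key step is then to unpack $(1-\eps)^{-\ell}$ using the specific choice $\ell=\frac{j\ln n-\omega+3\ln\eps}{-\ln(1-\eps)}$. By construction,
\[
(1-\eps)^{-\ell} = \exp\bigl(-\ell\ln(1-\eps)\bigr) = \exp(j\ln n - \omega + 3\ln\eps) = \frac{n^{j}\eps^{3}}{e^{\omega}},
\]
so the triple sum is $O(e^{-\omega})=o(1)$ since $\omega\to\infty$. This cancellation is not a coincidence: the term $3\ln\eps$ in the definition of $\ell$ is precisely what is needed to absorb the $\eps^{-3}$ factor produced by Claim~\ref{claim:roneterm}. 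Substituting back into Corollary~\ref{cor:squareexp} gives the upper bound $\EE(X_\ell^2) \le ((n)_v)^2 p^{2\ell}(1+o(1))$.

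For the matching lower bound (needed for the equality in the claim), we simply use Jensen's inequality and the formula $\EE(X_\ell)=(n)_v p^{\ell}$ derived from~\eqref{eqn:expectationexact} (noting that $X_\ell = z_\ell \hat X_\ell$ if one counts ordered paths, so $\EE(X_\ell)^2 = ((n)_v)^2 p^{2\ell}$), obtaining $\EE(X_\ell^2)\ge ((n)_v)^2 p^{2\ell}$. Combining the two directions yields $\EE(X_\ell^2) = ((n)_v)^2 p^{2\ell}(1+o(1))$.

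The main obstacle is essentially notational rather than conceptual: keeping track of the interaction between $\eps$ and $\omega$ in the exponent of $(1-\eps)^{-\ell}$. All the genuine work has already been loaded into Proposition~\ref{prop:pl2qrcbound} and Proposition~\ref{prop:doublesum}, so once those are in hand the claim reduces to this one-line cancellation. It is also worth observing why the hypothesis $\eps^3 n\to\infty$ appears: without it the error term $O(e^{-\omega})$ would still be $o(1)$, but the lower bound on $\ell$ (and hence on $L$) would become vacuous or negative for large enough $\omega$, so the assumption $\eps^3 n\to\infty$ is needed downstream to ensure the resulting estimate on $L$ is meaningful when applying Chebyshev's inequality.
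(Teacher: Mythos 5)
Your proposal is correct and follows essentially the same route as the paper: apply Proposition~\ref{prop:doublesum} and Claim~\ref{claim:roneterm} to reduce the triple sum in Corollary~\ref{cor:squareexp} to $O(n^{-j})(1-\eps)^{-\ell}\eps^{-3}=O(e^{-\omega})=o(1)$, using the choice of $\ell$ to make the exponent cancel. Your explicit Jensen step for the matching lower bound is a small addition the paper leaves implicit (it only writes out the upper bound and relies on $\EE(X_\ell^2)\ge\EE(X_\ell)^2$ when applying Chebyshev), but it is the same argument.
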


We can now use these auxiliary results to prove our lower bound.

\begin{proof}[Proof of Lemma~\ref{lem:lowerbound-subcritical:case1}]
Recalling that $\cP_\ell$ is the set of all possible $j$-tight paths of length $\ell$ in $H^k(n,p)$,
clearly $\EE(X_\ell) = |\cP_\ell| p^\ell = (n)_v p^\ell$.
Therefore by Claim~\ref{claim:secondmoment}, we have
$$
\EE(X_\ell^2) = \EE(X_\ell)^2(1+o(1)),
$$
and a standard application of Chebyshev's inequality shows that whp $X_\ell \ge 1$,
i.e.\ whp
\[L(G(n,p))\geq \ell = \frac{j\ln n - \omega + 3 \ln \eps}{-\ln(1-\eps)}\]
as claimed.
\end{proof}

It would be tempting to try to generalise this proof to also prove
a lower bound in the supercritical case. However, this strategy fails
because as the paths $A$ and $B$ become longer, there are many more
ways in which they can intersect each other, and therefore the terms which,
in the subcritical case, were negligible lower order terms (i.e.\ $q\ge 1$)
become more significant. We will therefore use an entirely different strategy
for the supercritical case.

\section{The \pathfinder\ algorithm}\label{sec:dfs}

The proof strategy for the lower bound in the supercritical case
is to define a depth-first search algorithm, which we call
\pathfinder\ and which discovers $j$-tight paths
in a $k$-uniform hypergraph, and to show that whp this algorithm, when applied to $H^k(n,p)$, will find a path of the
appropriate length.

\subsection{Special case: tight paths in $3$-uniform hypergraphs}\label{sec:specialdfs}

Before introducing the \pathfinder\ algorithm,
we briefly describe the algorithm in the special case $k=3$
and $j=2$, in order to introduce some of the ideas required
for the more complex general version.

In the special case, given a $3$-uniform hypergraph $H$,
the algorithm aims to construct a tight
path in $H$ starting at some \emph{ordered pair} of vertices $(v_1,v_2)$.
It will maintain a partition of the (unordered) pairs into \emph{neutral},
\emph{active}, and \emph{explored} pairs; initially only $\{v_1,v_2\}$
is active and all other pairs are neutral.

The algorithm now runs through the remaining $n-2$ vertices
(apart from $v_1,v_2$) in turn, for each such vertex $x$
making a \emph{query} to reveal whether $\{v_1,v_2,x\}$ forms an
edge of $H$. If we do not find
such an edge, then the pair $\{v_1,v_2\}$ is labelled explored,
and we choose a new ordered pair from which to begin
(the corresponding unordered pair is then labelled active, and the corresponding vertices take
the place of $v_1,v_2$). On the other hand, if
we do find an edge $\{v_1,v_2,x\}$, then we set $v_3=x$, label
the pair $\{v_2,v_3\}$ active and look for ways to extend the
path from this pair.

More generally, at each step of the algorithm the current
path will consist of vertices $v_1,v_2,\ldots,v_{\ell+2}$,
where $\ell$ is the length (i.e.\ number of edges of the path).
The set of active pairs will consist of $\{v_i,v_{i+1}\}$ for $1\le i \le \ell+1$,
and we will seek to extend the path from $\{v_{\ell+1},v_{\ell+2}\}$.
We therefore aim to query triples $\{v_{\ell+1},v_{\ell+2},x\}$,
but we have some restrictions on when such a query can be made:
\begin{enumerate}[\textnormal{\arabic*.}]
\item $\{v_{\ell+1},v_{\ell+2},x\}$ must not have been queried from $\{v_{\ell+1},v_{\ell+2}\}$ before;
\item $x$ may not lie in $\{v_1,\ldots,v_{\ell+2}\}$;
\item Neither $\{v_{\ell+1},x\}$ nor $\{v_{\ell+2},x\}$ may be explored.
\end{enumerate}

The purpose of the first condition is clear: this ensures
that we do not repeat previous queries and get stuck in a loop.
The second condition forbids extensions which re-use a vertex
which is already in the current path, which is also clearly necessary.

The third condition is perhaps the most interesting one.
The algorithm would run correctly and find a tight path
even without this condition,
but it
does ensure that no triple is ever queried more than once,
which might otherwise occur as the triple $\{v_{\ell+1},v_{\ell+2},x\}$ might have been queried
from, say, the explored pair $\{v_{\ell+1},x\}$. While this
would be permissible to create a new tight path, it
would mean that the outcomes of some queries are dependent
on each other, making the \emph{analysis} of the algorithm
far more difficult.

We therefore forbid such queries, which means that we may
not find the longest path in the hypergraph, but if we
still find a path of the required length, this is sufficient.

If we find an edge $\{v_{\ell+1},v_{\ell+2},x\}$ from the pair $\{v_{\ell+1},v_{\ell+2}\}$,
we set $v_{\ell+3}=x$, label $\{v_{\ell+2},v_{\ell+3}\}$ active
and continue exploring from this pair.
If on the other hand we find no such edge from $\{v_{\ell+1},v_{\ell+2}\}$,
then we label $\{v_{\ell+1},v_{\ell+2}\}$ explored, remove $v_{\ell+2}$
from the path and continue exploring from the previous
active pair, i.e.\ $\{v_\ell,v_{\ell+1}\}$
(unless $\ell=0$ in which case we have no further active
pairs and we pick a new, previously neutral pair to start from,
and order the vertices of this pair arbitrarily).

We now highlight a few ways in which the algorithm
for general $k$ and $j$ differs from this special
case, before introducing the algorithm more formally in
Section~\ref{sec:dfsalg}.

Rather than the \emph{pairs} of vertices, it will be the \emph{$j$-sets}
of vertices which are neutral, active or explored.
We also begin our exploration process from a $j$-set
rather than a pair.

In the special case, we also had an order of the vertices,
and began with an ordered pair. In general, we will not
necessarily have a \emph{total} order of the vertices in the path,
but we will have a \emph{partial} order, or more precisely an
\emph{ordered partition} of each $j$-set into
a set of size $a$ and some sets of size $k-j$.
This is connected to the fact that the last active
$j$-set in the current path will contain the tail (see Section~\ref{sec:pathstructure}),
and the ordered partition specifies which vertices
belong to the sets $G_1,\ldots,G_s$.

Related to this, depending on the values of $k$ and $j$,
when we discover an edge $K$ from a $j$-set $J$,
it may be that more than one
$j$-set becomes active. More precisely, the tail will
shift from $G_1,\ldots,G_s$ to $G_2,\ldots,G_{s+1}$,
where $G_{s+1}=K\setminus J$, and any $j$-set containing
the new tail and $a$ vertices from $G_1$ is a valid place
to continue extending the path, and therefore becomes active.

A consequence of this is that $j$-sets become active
in \emph{batches} of size $\binom{k-j}{a}$.
Such a batch becomes active each time we discover an edge
and from any $j$-set of the batch we can continue the path.
Therefore we do not remove an edge (and decrease the length
of the path) every time a $j$-set becomes explored---we only
do this once \emph{all} $j$-sets of the corresponding batch have become explored.

\subsection{Hypergraph exploration using DFS}\label{sec:dfsalg}

In this section, we will describe the \pathfinder\ algorithm to find $j$-tight paths in $k$-uniform
hypergraphs in full generality. 
We will use the following notation:
if $\cF$ is a family of sets and $X$ is a set, we write $\cF+X$ and $\cF-X$ to mean $\cF \cup \{X\}$ and $\cF \setminus \{X\}$ respectively.

Recall from~\eqref{eq:def:a} that $a \in [k-j]$ is such that $a \equiv k \bmod (k-j)$,
and from the statement of Lemma~\ref{lem:isomorphisms} that $s=\lceil \frac{k}{k-j}\rceil-1= \lceil \frac{j}{k-j}\rceil$.
Let us define $r:=s-1 = \lceil\frac{j}{k-j}\rceil-1$,
so that $j = a+(k-j)r$.
\begin{definition}
Given a set $J$ of $j$ vertices, an \emph{extendable partition of $J$}
is an ordered partition $(C_0, C_1, \dotsc, C_r)$ of $J$
such that $|C_0| = a$ and $|C_i| = k-j$ for all $i \in [r]$.
\end{definition}

Note that if we have constructed a reasonably long $j$-tight path (i.e.\ of length at least $s$),
the final $j$-vertices naturally come with an extendable partition $(C_0,C_1,\ldots,C_r)$ according to which edges
of the path they lie in, similar to the partition of all vertices of the path described
in Section~\ref{sec:pathstructure}. The vertices within each part of the extendable partition
could be re-ordered arbitrarily to obtain a new path with the same edge set.
Therefore if we find a further edge from the final $j$-set to extend the path, there is more than one possibility
for the final $j$-set of the extended path---it must contain $C_2,\ldots,C_r$ and a further $a$ vertices from $C_1$, which may be chosen arbitrarily.
Thus an extendable partition provides a convenient way to describe the $j$-sets from which we might
further extend the path.

Although for paths of length shorter than $s$ the $j$-sets come only with a coarser (and therefore less restrictive)
partition, it is convenient for a unified description of the algorithm for them to be given an extendable partition.
In particular, we will start our search process from a $j$-set which we artificially endow with an extendable partition;
this additional restriction is permissible for a lower bound on the longest path length.

We begin by giving an informal overview of the algorithm---
the formal description follows.

\renewcommand{\thealgocf}{}

\SetAlFnt{\footnotesize}
\begin{algorithm}
	\DontPrintSemicolon
	\KwIn{Integers $k, j$ such that $1\le j \le k-1$.}
	\KwIn{$H$, a $k$-uniform hypergraph.}
	Let $a \in [k-j]$ be such that $a \equiv k \bmod (k-j)$\;
	Let $r=\lceil \frac{j}{k-j}\rceil-1$\;
	For $i \in \{j,k\}$, let $\sigma_i$ be a permutation of the $i$-sets of $V(H)$, chosen uniformly at random\;
	$N \gets \binom{V(H)}{j}$ \tcp*{neutral $j$-sets}
	$A, E \gets \emptyset$ \tcp*{active, explored $j$-sets}
	$P \gets \emptyset$ \tcp*{current $j$-tight path}
	$\ell \gets 0$ \tcp*{index tracking the current length of $P$}
	$t \gets 0$ \tcp*{``time'', number of queries made so far}
	
	\While{$N \neq \emptyset$}{
		Let $J$ be the smallest $j$-set in $N$, according to $\sigma_j$ \tcp*{``new start''}
		Choose an arbitrary extendable partition $\cP_J$ of $J$\;
		$B_0 = \{ J \}$ \;
		$A \gets \{ J \}$\;
		\While{$A \neq \emptyset$}{
			Let $J$ be the last $j$-set in $A$\;
			Let $\cK$ be the set of $k$-sets $K \subset V(H)$ such that
				$K \supset J$,
				$K$ was not queried from $J$ before,
				$K\setminus J$ is vertex-disjoint from $P$,
				and $K$ does not contain any $J' \in E$\;
			\uIf{$\cK \neq \emptyset$}{
				Let $K$ be the first $k$-set in $\cK$ according to $\sigma_k$\;
				$t \gets t+1$ \tcp*{a new query is made}
				\If(\tcp*[f]{``query $K$''}){$K \in H$}{
					$e_\ell \gets K$ \;
					$P \gets P + e_{\ell}$ \tcp*{$P$ is extended by adding $K=e_{\ell}$}
					$\ell \gets \ell+1$ \tcp*{length of $P$ increases by one} 
					Let $\cP_J=(C_0, C_1, \dotsc, C_r)$ be the extendable partition of~$J$\;
					\For{each $Z \in \binom{C_1}{a}$}{
						$J_Z \gets Z \cup C_2 \cup \dotsb \cup C_r \cup (K \setminus J)$ \tcp*{$j$-set to be added}
						$\cP_{J_Z} \gets (Z, C_2, \dotsc, C_r, K \setminus J)$\tcp*{extendable partition}
						$i(J_Z) \gets \ell$ \;
						$A \gets A + J_Z$ \tcp*{$j$-set becomes active}
					}
					$\cB_{\ell} \gets \{ J_Z : Z \in \binom{C_1}{a} \}$\;
				}
				$(A_t, E_t, P_t) \gets (A, E, P)$ \tcp*{update ``snapshot'' at time $t$}
			}
			\ElseIf(\tcp*[f]{all extensions from $J$ were queried}){$\cK = \emptyset$}{
				$A \gets A - J$ \tcp*{$J$ becomes explored}
				$E \gets E + J$\;
					\If(\tcp*[f]{the current batch is fully explored}){ $\cB_\ell \subset E$ }{
						$\cB_\ell \gets \emptyset$ \tcp*{empty this batch}
						$P \gets P - e_{\ell}$ \tcp*{last edge of $P$ is removed}
						$\ell \gets \ell - 1$ \tcp*{length of $P$ decreases by one}
					}
				}
			}
		}
	\caption{\pathfinder}
	\label{algorithm:dfs}
\end{algorithm}

At any given point, the algorithm will maintain a $j$-tight path $P$
and a partition of the $j$-sets of $V(H)$ into \emph{neutral}, \emph{active} or \emph{explored} sets.
Initially, $P$ is empty and every $j$-set is neutral.
During the algorithm every $j$-set can change its status from neutral to active and from active to explored.
The $j$-sets which are active or explored will be referred to as \emph{discovered}.

The edges of $P$ will be $e_1, \dotsc, e_{\ell}$ (in this order),
and every active $j$-set will be contained inside some edge of $P$.
Whenever a new edge $e_{\ell + 1}$ is added to the end of $P$,
a batch $\cB_{\ell + 1}$ of neutral $j$-sets within that edge will become active:
these are the $j$-sets from which we could potentially extend the current path.
A $j$-set $J$ becomes explored once all possibilities to extend $P$ from $J$ have been queried.
Once all of the $j$-sets in the batch $\cB_{\ell}$ corresponding to $e_{\ell}$
have been declared explored, $e_{\ell}$ will be removed from $P$.

The active sets will be stored in a ``stack'' structure (last in, first out).
Each active $j$-set $J$ will have an associated extendable partition
$\cP_J$ of $J$, and an index $i(J) \in \{ 0, \dotsc, \ell \}$,
where $\ell$ is the current length of $P$.
The extendable partition will keep track of the ways in which we can extend $P$ from $J$
in a consistent manner, as described in Section~\ref{sec:dfsalg}.
The index $i(J)$ will indicate that $J$ belongs to the batch $\cB_{i(J)}$
which was added when the edge $e_{i(J)}$ was added to $P$.
Thus the algorithm will maintain a collection of batches $\cB_0, \dotsc, \cB_\ell$,
all of which consist of discovered $j$-sets which are inside $V(P)$.
It will hold that $|\cB_0| = 1$ and $|\cB_i| = \binom{k-j}{a}$ for all $i \geq 1$, and all the batches will be disjoint.

All the $j$-sets from a single batch will change their status 
from neutral to active in a single step, and they will be added to the stack according to some fixed order
which is chosen uniformly at random during the initialisation of the algorithm.

An iteration of the algorithm can be described as follows.
Suppose $J$ is the last active $j$-set in the stack.
We will query $k$-sets $K$, to check whether~$K$ is an edge in~$H$ or not.
We only query a $k$-set $K$ subject to the following conditions:
\begin{enumerate}[\textnormal{(Q\arabic*)}]
\item \label{cond:YcontJ} $K$ contains~$J$;
\item \label{cond:vertexrepeat} $K \setminus J$ is disjoint from the current path $P$;
\item \label{cond:queryrepeat} $K$ was not queried from $J$ before;
\item \label{cond:noexplored} $K$ does not contain any explored $j$-set.
\end{enumerate}
Condition~\ref{cond:YcontJ} ensures that we only query $k$-sets with which we might
sensibly continue the path in a $j$-tight manner.
Condition~\ref{cond:vertexrepeat} ensures that we do not
re-use vertices that are already in $P$.
Together, these two conditions guarantee that $P$ will indeed
always be a $j$-tight path.
Moreover, Condition~\ref{cond:queryrepeat} ensures that
we never query a $k$-set more than once from the same $j$-set,
thus guaranteeing that the algorithm does not get stuck in an infinite loop.
Finally Condition~\ref{cond:noexplored} ensures that we never query a $k$-set
a second time from a \emph{different} $j$-set (note that
the possibility that $K$ could have been queried from another \emph{active}
$j$-set is already excluded by Condition~\ref{cond:vertexrepeat},
since such an active $j$-set would lie within $P$).
Note that, as described in Section~\ref{sec:specialdfs},
Condition~\ref{cond:noexplored} is not actually necessary for the
correctness of the algorithm, but it does ensure independence of queries
and is therefore necessary for our \emph{analysis} of the algorithm.

If no such $k$-set~$K$ can be found in the graph $H$, then we declare~$J$
explored and move on to the previous active $j$-set in the stack.
Moreover, if at this point all of the $j$-sets in the batch
$\cB_{i(J)}$ of~$J$ have been declared explored, the last edge $e_\ell$
of the current path is removed and $\ell$ is replaced by $\ell-1$.
If the set of active $j$-sets is now empty, we choose a new $j$-set
$J$ from which to start, declare $J$ active and choose an extendable
partition of $J$.

On the other hand, if we can find a suitable set~$K$ for $J$,
we query $K$, and if it forms an edge, then
according to the extendable partition of $J$, the set~$K$ will yield
a new batch of $j$-sets (which previously were neutral and now become active).
More precisely, if the extendable partition of $J$ is
$(C_0,C_1,\ldots,C_r)$, then the
batch consists of all $j$-sets which contain $K\setminus J$ and
$C_2,\ldots,C_r$,
as well as $a$ vertices of $C_1$. Thus the batch
consists of $\binom{k-j}{a}$ many $j$-sets.

Finally, we keep track of a ``time'' parameter $t$,
which counts the number of queries the algorithm has made.
Initially, $t = 0$ and $t$ increases by one each time we query
a $k$-set.

During the analysis we will make reference to certain
objects or families which are implicit in the algorithm at each time $t$ even if
the algorithm does not formally track them.
These include the sets of neutral, active and discovered $j$-sets
$N_t,A_t,E_t$ and the current path $P_t$, which are simply the sets $N,A,E$ and the path $P$ at time $t$.
We say that $(A_t, E_t, P_t)$ is the \emph{snapshot of $H$ at time $t$}.
We also refer to certain families of $j$-sets, including $D_t$ (the discovered $j$-sets),
$R_t$ (the ``new starts'') and $S_t$ (the ``standard $j$-sets''),
as well as families $F^{(1)}_t, F^{(2)}_t, F_t$
of $(k-j)$-sets (the ``forbidden subsets'').
The precise definitions of all of these families will be given when they become relevant.

\subsection{Proof strategy}

Our aim is to analyse the \pathfinder\ algorithm and show that \whp\ it finds a path of length at least
$\frac{(1-\delta)\eps n}{(k-j)^2}$, or at least
$\frac{(1-\delta)\eps^2 n}{4(k-j)^2}$ if $j=1$. The overall strategy
can be described rather simply: suppose that by some
time $t$, which is reasonably large, we have not discovered
a path of the appropriate length.
Then \whp\ (and disregarding some small error terms),
the following holds:
\begin{enumerate}[(\Alph*)]
\item \label{step:edges} We have discovered at least $pt\binom{k-j}{a}$ many $j$-sets;
\item \label{step:explored} Very few $j$-sets are active, therefore at least $pt\binom{k-j}{a}$ are explored;
\item \label{step:queriesfromexplored} From each explored $j$-set, we queried at least $\binom{n'}{k-j}$ many $k$-sets,
where $n'= \left(1-\frac{(1-\delta)\eps}{k-j}\right)n$.
\item \label{step:contradiction} Thus the number of queries made is at least
\begin{align*}
pt\binom{k-j}{a}\binom{n'}{k-j}
& = t\frac{(1+\eps)}{\binom{n}{k-j}}\binom{\left(1-\frac{(1-\delta)\eps}{k-j}\right)n}{k-j}\\
& \approx t(1+\eps)(1-(1-\delta)\eps) >t.
\end{align*}
\end{enumerate}
This yields a contradiction since the number of queries made
is exactly $t$ by definition.

The proof consists of making these four steps more precise.
Three of these four steps are very easy to prove, once the
appropriate error terms have been added:

Step~\ref{step:edges}
follows from a simple Chernoff bound applied to the number of edges
discovered, along with the observation that for each edge, we
discover $\binom{k-j}{a}$ many $j$-sets.

Step~\ref{step:explored} follows from the observation
that all active $j$-sets lie within some edge of the current
path, and therefore there are at most $O(\eps n)$ of them,
which (for large enough $t$) is a negligible proportion
of the number of discovered $j$-sets, and therefore
almost all discovered $j$-sets must be explored.

Step~\ref{step:contradiction} is a basic calculation
arising from the bounds given by the previous three steps (though
in the formal proof we do need to incorporate some error
terms which we have omitted in this outline).

Thus the main difficulty is to prove Step~\ref{step:queriesfromexplored}.
Recall that a $k$-set $K$ containing $J$ may not be queried
for one of two reasons:
\begin{itemize}
\item $K\setminus J$ contains some vertex of $P$;
\item $K$ contains some explored $j$-set.
\end{itemize}
It is easy to bound the number of $k$-sets forbidden by the
first condition, since we assumed that the path was never long---
this is precisely what motivates the definition of $n'$.
However, we also need to show that \whp\ the number of $k$-sets
forbidden by the second condition is negligible, which will
be the heart of the proof.

\section{Basic properties of the algorithm}\label{sec:algprops}

Before analysing the likely evolution of the \pathfinder\ algorithm,
we first collect some basic properties which will be useful later.

Note that there are two ways in which a $j$-set $J$ can be discovered up to time $t$.
First, it could have been included as a \emph{new start} when the set of active
$j$-sets was empty and we chose a $j$-set $J$ from which to start exploring a new path (Line 10).
Second, $J$ could have been declared active if it was part of a batch
of $j$-sets activated when we discovered an edge, which we refer to
as a \emph{standard activation} (Lines 20-30), and we refer to the
$j$-sets which were discovered in this way as \emph{standard} $j$-sets.

For any $t \ge 0$, let $\ell_t := |E(P_t)|$ be the length (i.e.\ number of edges) of the path found by the algorithm at time $t$.

\begin{proposition}
At any time $t$, the number $|A_t|$ of active $j$-sets is at most
\begin{align}
|A_t| \leq 1 + \binom{k-j}{a} \ell_t. \label{eqn:activeviapath}
\end{align}
\end{proposition}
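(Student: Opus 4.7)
The plan is to argue directly from the batch structure maintained by the algorithm. At any time $t$, every active $j$-set belongs to exactly one batch $\cB_i$, where $\cB_0$ is the singleton batch created by the most recent new start and $\cB_i$ for $1 \le i \le \ell_t$ is the batch of size $\binom{k-j}{a}$ that was created when edge $e_i$ was added to the current path $P_t$. Assuming this structural fact, the bound is immediate:
\[ |A_t| \;\le\; |\cB_0| + \sum_{i=1}^{\ell_t} |\cB_i| \;=\; 1 + \binom{k-j}{a}\ell_t. \]

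To establish the structural fact, I would induct on the number of iterations of the inner \textbf{while}-loop, tracking the invariant that at every point in the execution the collection of non-empty batches is exactly $\{\cB_0,\cB_1,\ldots,\cB_{\ell_t}\}$ with the sizes stated above. The base case is trivial, since at the start $A = \emptyset$ and $\ell_t = 0$. For the inductive step one checks the four branches of the algorithm: \emph{(a)} a new start happens only when $A$ is empty, sets $\cB_0 = \{J\}$ and leaves $\ell_t = 0$, so both sides equal $1$; \emph{(b)} a successful query appends a new batch of exactly $\binom{k-j}{a}$ many $j$-sets (see the loop over $Z\in\binom{C_1}{a}$) and increments $\ell_t$ by one, preserving the invariant; \emph{(c)} a failed query that does not empty the current top batch simply removes the top $j$-set from $A$ and leaves both $\ell_t$ and the batch collection unchanged; \emph{(d)} a failed query that empties the current top batch $\cB_{\ell_t}$ triggers the removal of $e_{\ell_t}$ from $P$, decrementing $\ell_t$ by one and eliminating exactly one batch from the collection. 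In each case the invariant survives.

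This statement is really just a consequence of the algorithm's bookkeeping, so I do not expect any essential obstacle; the only point worth paying attention to is the asymmetry between the singleton initial batch $\cB_0$ and the larger subsequent batches, which is precisely what produces the additive ``$+1$'' in the claimed bound.
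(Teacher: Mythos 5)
Your argument is correct and follows essentially the same route as the paper: every active $j$-set lies either in the singleton new-start batch or in one of the $\ell_t$ standard batches of size $\binom{k-j}{a}$, giving the bound at once. The paper states this counting directly without spelling out the loop-invariant induction, but the underlying reasoning is identical.
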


\begin{proof}

Recall that by construction, every active $j$-set in $A_t$ is contained in some edge of $P_t$.
Moreover, every time an edge is added to the current path,
exactly $\binom{k-j}{a}$ many $j$-sets are added via a standard activation.
There is also exactly one further active $j$-set which was added as a new start,
which gives the desired inequality.
\end{proof}
Note that equality does \emph{not} necessarily hold, because some
$j$-sets which once were active may already be explored.

For every $t$, let $R_t$ be the set of all discovered $j$-sets at time $t$ which were new starts,
and let $S_t$ be the discovered $j$-sets up to time $t$ which are standard.
Thus, for all $t$,
\[ R_t \cup S_t = A_t \cup E_t. \]
Note that if the query at time $t$ is answered positively,
then $|S_t| = |S_{t-1}| + \binom{k-j}{a}$, and otherwise $|S_t| = |S_{t-1}|$.
Thus, if $X_1, X_2,\ldots $ are the indicator variables that track which queries
are answered positively,
i.e.\ $X_i$ is $1$ if the $i$-th $k$-tuple queried forms an edge and $0$ otherwise,
then we have
\begin{align}
	|S_t| = \binom{k-j}{a} \sum_{i=1}^t X_i. \label{eqn:discoveredviaqueries}
\end{align}
Note that with input hypergraph $H=H^k(n,p)$, the $X_1,X_2,\ldots$ are simply 
i.i.d.\ Bernoulli random variables with probability $p$.
In particular, using Chernoff bounds, we can approximate $|S_t|$ when $t$ is large.
For completeness, the proof is given in Appendix~\ref{app:algprops}.

\begin{proposition} \label{proposition:usingchernoff}
	Let $p=\frac{1+\eps}{\binom{k-j}{a}\binom{n-j}{k-j}}$, let $t=t(n)\in \NN$, and let $0\le \gamma=\gamma(n)=O(1)$.
	Then when \pathfinder\ is run with input $k,j$ and $H=H^k(n,p)$, with probability
	at least $1-\exp(-\Theta(\gamma^2 pt))$ we have
	\[ (1 - \gamma)\frac{(1+\eps)t}{\binom{n-j}{k-j}} \leq |S_t| \leq (1 + \gamma)\frac{(1+\eps)t}{\binom{n-j}{k-j}}. \]
	In particular, if $\gamma^2 t n^{-(k-j)} \to \infty$, then these inequalities hold \whp.
\end{proposition}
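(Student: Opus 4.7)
The plan is to reduce the claim to a standard Chernoff bound applied to the partial sums $\sum_{i=1}^t X_i$, using equation~\eqref{eqn:discoveredviaqueries} to convert the count $|S_t|$ of standard $j$-sets into $\binom{k-j}{a}$ times this partial sum. The critical input, already highlighted in the excerpt, is that when \pathfinder\ is run on $H^k(n,p)$ the variables $X_1, X_2, \ldots$ are i.i.d.\ Bernoulli$(p)$. I would begin the proof by briefly justifying this: Conditions~\ref{cond:queryrepeat} and~\ref{cond:noexplored} together ensure that every queried $k$-set has never been queried before in the run of the algorithm (Condition~\ref{cond:queryrepeat} rules out repetition from the same active $j$-set, while Condition~\ref{cond:noexplored} rules out that $K$ was previously queried from some $j$-set that has since become explored, and Condition~\ref{cond:vertexrepeat} rules out overlap with queries from other currently active $j$-sets, which all lie in $V(P_t)$). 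Since the edges of $H^k(n,p)$ are independent, the answers to these distinct queries are independent Bernoulli$(p)$ trials.

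Next I would compute the expectation directly from~\eqref{eqn:discoveredviaqueries}:
\[
\EE[|S_t|] = \binom{k-j}{a} \cdot tp = \binom{k-j}{a} \cdot t \cdot \frac{1+\eps}{\binom{k-j}{a}\binom{n-j}{k-j}} = \frac{(1+\eps)t}{\binom{n-j}{k-j}},
\]
so the quantity appearing in the proposition is precisely $(1 \pm \gamma)\EE[|S_t|]$.

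Finally I would apply the Chernoff bounds from Lemma~\ref{lem:chernoffbounds} with $N = t$, $\xi = \gamma tp$, and $X = \sum_{i=1}^t X_i \sim \Bi(t,p)$. Both tails yield
\[
\Pr\bigl(|X - tp| \geq \gamma tp\bigr) \leq 2\exp\bigl(-\Theta(\gamma^2 tp)\bigr),
\]
using that $\gamma = O(1)$ to absorb constants in the exponent (in particular bounding $Np + \xi/3 = O(tp)$ in the upper tail). Multiplying through by $\binom{k-j}{a}$ translates this into the desired deviation bound on $|S_t|$. For the ``in particular'' clause, note that $p = \Theta(n^{-(k-j)})$, so $\gamma^2 tp = \Theta(\gamma^2 t n^{-(k-j)}) \to \infty$ under the stated hypothesis, whence the failure probability is $o(1)$.

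I do not anticipate any genuine obstacle here: essentially all the work was done in setting up the algorithm so that the queries are mutually independent, and the remainder is a textbook Chernoff estimate. The only point worth stating carefully in the writeup is the verification of independence of the $X_i$ via Conditions~\ref{cond:vertexrepeat}--\ref{cond:noexplored}, since this is the feature of \pathfinder\ that distinguishes it from a naive depth-first search and underpins all subsequent probabilistic analysis.
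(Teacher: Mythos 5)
Your proposal is correct and follows essentially the same route as the paper: reduce via~\eqref{eqn:discoveredviaqueries} to the statement $(1-\gamma)pt \le \sum_{i=1}^t X_i \le (1+\gamma)pt$ and apply Lemma~\ref{lem:chernoffbounds} with $\xi=\gamma pt$, using $\gamma=O(1)$ to absorb the $\gamma pt/3$ term in the upper-tail denominator. The extra paragraph justifying the independence of the $X_i$ via Conditions~\ref{cond:vertexrepeat}--\ref{cond:noexplored} is sound but is material the paper already establishes in the algorithm description rather than in this proof.
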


Note that this proposition gives a lower bound on the number of discovered
$j$-sets, but it does not immediately give an upper bound, since it says
nothing about the number of new starts that have been made.
(Later the number of new starts will be bounded
by Proposition~\ref{prop:newstartsbound} in the case $j\ge 2$, ; we will not need such
an upper bound in the case $j=1$.)

How many queries are made from a given $j$-set $J$ before it is declared explored?
Clearly $\binom{n-j}{k-j}$ is an upper bound, since this is the number 
of $k$-sets that contain $J$, but some of these are excluded in the algorithm,
and we will need a lower bound.
In what follows, for convenience we slightly abuse terminology by referring to querying 
not a $k$-set $K\supset J$, but rather the $(k-j)$-set $K\setminus J$.
(If $J$ is already determined, this is clearly equivalent.)

There are two reasons why a $(k-j)$-set disjoint from the current $j$-set $J$
may never be queried---
either it contains a vertex of the current path, or it contains an explored $j$-set.

\begin{definition} \label{definition:forbidden}
Consider an exploration of a $k$-uniform hypergraph $H$ using \pathfinder.
	Given $t$, let $J$ be the last active set in the stack of $A_t$.
	We call a $(k-j)$-set $X\subset V(H)\setminus J$ \emph{forbidden at time $t$}, if
	\begin{enumerate}[label=\normalfont{(\arabic*)}]
		\item $X\cap V(P_t)\neq\emptyset$, or \label{item:forbidden-path}
		\item there exists an explored $j$-set $J' \in E_t$ such that $J' \subset (J\cup X)$. \label{item:forbidden-explored}
	\end{enumerate}
	If $X$ satisfies~\ref{item:forbidden-path}
	we say $X$ is a \emph{forbidden set of type 1}; if it satisfies~\ref{item:forbidden-explored}
	we say it is a \emph{forbidden set of type 2}.
	Let $F^{(1)}=F^{(1)}_t$ and $F^{(2)}=F^{(2)}_t$ denote
	the corresponding sets of forbidden $(k-j)$-sets at time $t$, and 
	let $F=F_t:=F^{(1)}_t \cup F^{(2)}_t$ be the set of all forbidden $(k-j)$-sets at time $t$.
\end{definition}
Observe that a $(k-j)$-set might be a forbidden set of both types, i.e.\ may lie in both $F^{(1)}$ and $F^{(2)}$.
The following consequence of the definition of forbidden $(k-j)$-sets is crucial:
if $J$ is declared explored at time $t$ and a $(k-j)$-set $X$ disjoint from $J$
is not in $F_t$, then $X$ was queried from $J$ by the algorithm (at some time $t' \leq t$).
Thus, if the number of forbidden sets at time $t$ is ``small'',
then a ``large'' number of queries were required to declare $J$ explored.

Our aim is to bound the size of $F_t = F^{(1)}_t \cup F^{(2)}_t$.
If the \pathfinder\ algorithm has not found a long path,
then $F^{(1)}_t$ is small.
More precisely, we obtain the following bound.

\begin{proposition}\label{prop:boundforbidden1length}
	For all times $t \ge 0$,
	\[ |F^{(1)}_t| \leq \ell_t\cdot (k-j) \binom{n-j-1}{k-j-1}. \]
\end{proposition}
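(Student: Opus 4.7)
The plan is to bound $|F^{(1)}_t|$ by a straightforward double-counting argument: count pairs $(x, X)$ where $x \in V(P_t)\setminus J$ and $X \in F^{(1)}_t$ with $x \in X$, then upper bound the number of $X$ containing any given vertex $x$.

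First I would fix notation by letting $J \in A_t$ denote the last active $j$-set in the stack at time $t$, so that every $X \in F^{(1)}_t$ is a $(k-j)$-subset of $V(H)\setminus J$ satisfying $X \cap V(P_t) \neq \emptyset$. Since $X$ is disjoint from $J$ while $J \subset V(P_t)$, the intersection $X \cap V(P_t)$ is actually contained in $V(P_t) \setminus J$. By the definition in~\eqref{eq:def:v}, $P_t$ has exactly $v_{j,k}(\ell_t)=(k-j)\ell_t + j$ vertices, so
\[
|V(P_t) \setminus J| \le (k-j)\ell_t + j - j = (k-j)\ell_t.
\]
(Here I use that $J \subset V(P_t)$, which holds by the invariant of the algorithm that every active $j$-set lies inside the current path.)

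Next, for each vertex $x \in V(P_t)\setminus J$, I would count the number of $(k-j)$-subsets $X$ of $V(H)\setminus J$ containing $x$. Such a set $X$ is determined by its remaining $k-j-1$ vertices, chosen from $V(H) \setminus (J \cup \{x\})$, a set of size $n - j - 1$. Hence there are at most $\binom{n-j-1}{k-j-1}$ such sets $X$ for each choice of $x$.

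Finally, since every $X \in F^{(1)}_t$ must contain at least one vertex of $V(P_t) \setminus J$, a union bound over the at most $(k-j)\ell_t$ such vertices yields
\[
|F^{(1)}_t| \le |V(P_t)\setminus J| \cdot \binom{n-j-1}{k-j-1} \le \ell_t \cdot (k-j)\binom{n-j-1}{k-j-1},
\]
as claimed. There is no real obstacle here; the only point requiring a moment's care is the observation that $X$ being disjoint from $J$ forces the intersection witnessing $X \in F^{(1)}_t$ to land in $V(P_t)\setminus J$, which is exactly what makes the factor $(k-j)\ell_t$ (rather than $(k-j)\ell_t + j$) valid.
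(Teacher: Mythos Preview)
Your proof is correct and essentially identical to the paper's own argument: both fix the current active $j$-set $J$, note that any $X\in F^{(1)}_t$ must meet $V(P_t)\setminus J$, and then bound via $|V(P_t)\setminus J|\le (k-j)\ell_t$ together with the $\binom{n-j-1}{k-j-1}$ extensions per vertex. The only difference is that you spell out the double-counting slightly more explicitly, which is fine.
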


\begin{proof}
	Let $J$ be the current active $j$-set in $A_t$.
	A $(k-j)$-set $X$ is in $F^{(1)}_t$ if and only if $X \cap J = \emptyset$ and $X \cap V(P_t) \neq \emptyset$;
	thus $|F^{(1)}_t| \leq |V(P_t) \setminus J| \binom{n-j-1}{k-j-1}$.
	Since $J \subset V(P_t)$ and $P_t$ has $\ell_t$ edges,
	we have $|V(P_t) \setminus J| = \ell_t\cdot (k-j)$, and the desired bound follows.
\end{proof}

It remains to estimate the number of forbidden sets of type 2.
To achieve this, in the next section we will give more precise estimates
on the evolution of the algorithm run with input $H^k(n,p)$
(and in particular the evolution of discovered $j$-sets,
which certainly includes all explored $j$-sets).

We will need to treat the case $j=1$ separately from the case $j\ge 2$.
We begin with the case $j=1$, since this is significantly easier
but introduces some of the ideas that will be used in the more complex
case $j\ge 2$.

\section{Algorithm analysis: loose case ($j=1$)} \label{sec:loose}

The case $j=1$ is different from all other cases because the $j$-sets of the
exploration process are simply vertices. This is important because there
is a certain interplay between $j$-sets and vertices regarding where a path
``lies''---in general, $j$-sets can only be blocked because they were previously explored, but
vertices can be blocked because they are in the current path. Furthermore, for $j\ge 2$,
we may revisit some vertices from a discarded branch of the
depth-first search process, but for $j=1$, since $j$-sets and vertices are the same,
this is no longer possible.

This fundamental difference is reflected in the fact that the length of the longest
path discovered by the \pathfinder\ algorithm in the supercritical case
is significantly shorter for $j=1$ (i.e.\ $\Theta(\eps^2n)$ rather than $\Theta(\eps n)$).
Indeed, it seems likely that this is in fact best possible up to a constant factor,
i.e.\ that the longest loose path has length $\Theta(\eps^2 n)$,
rather than that either the algorithm or our analysis is far too weak.
This is certainly the case for graphs, i.e.\ for $k=2$; we will
discuss this for general $k$ in more detail in Section~\ref{sec:concluding}.

For convenience, we restate the result we are aiming to prove as a lemma.

\begin{lemma}\label{lem:loosesupercriticallower}
Let $k\in \NN$ and let $\eps = \eps(n)$ satisfy $\eps^3 n \xrightarrow{n\to \infty} \infty$. Let
$$
p=(1+\eps)p_0 = \frac{1+\eps}{(k-1)\binom{n-1}{k-1}}
$$
and let $L$ be the length of the longest loose path in $H^k(n,p)$.
Then for all $\delta \gg \eps$ satisfying $\delta^2\eps^3 n \xrightarrow{n\to \infty}\infty$, \whp
		 \[ L\ge (1 - \delta)\frac{\eps^2 n}{4(k-1)^2} .\]
\end{lemma}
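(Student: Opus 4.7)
The plan is to follow the four-step strategy outlined in Section~\ref{sec:dfs}. Run \pathfinder\ on $H = H^k(n,p)$, fix
\[
T^* := \frac{\eps n M}{2(k-1)} \quad\text{where}\quad M := \binom{n-1}{k-1},
\]
and suppose for contradiction that $\ell_t < \ell^* := (1-\delta)\eps^2 n/(4(k-1)^2)$ holds for every $t \le T^*$. I will derive a contradiction, which then implies that \whp\ the current path of \pathfinder\ has length at least $\ell^*$ at some $t \le T^*$, yielding $L \ge \ell^*$. The constant $1/2$ in the definition of $T^*$ is calibrated to optimise the quadratic constraint which emerges at the end of the argument.

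Apply Proposition~\ref{proposition:usingchernoff} with $\gamma := \delta\eps/4$; the hypothesis $\delta^2\eps^3 n \to \infty$ guarantees $\gamma^2 p T^* = \Theta(\delta^2\eps^3 n) \to \infty$, so \whp\ $|S_{T^*}| = (1\pm\gamma)(1+\eps)\eps n/(2(k-1))$. Combined with $|A_{T^*}| \le 1 + (k-1)\ell^*$ from~\eqref{eqn:activeviapath}, this gives $|E_{T^*}| \ge |S_{T^*}| - |A_{T^*}|$. The key combinatorial step is that for every $J \in E_{T^*}$, the number of queries made from $J$ is at least $\binom{u}{k-1}$, where $u := n - |E_{T^*}| - |A_{T^*}|$: when $J$ was declared explored (at some time $t_J \le T^*$) the set $\cK$ was empty, so every $k$-set $K \supset J$ with $K \setminus J$ contained in the undiscovered set at time $T^*$ (a subset of the undiscovered set at $t_J$) must have previously been queried from $J$, because neither condition~(Q2) nor~(Q4) excludes it. Condition~(Q4) further ensures that such queries cannot coincide across distinct explored $j$-sets. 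Summing yields
\[
T^* \;\ge\; |E_{T^*}|\cdot\binom{u}{k-1}.
\]

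Normalising by $N := n/(k-1)$ and writing $x := |E_{T^*}|/N$, the approximation $\binom{u}{k-1}/M \approx (1 - x/(k-1))^{k-1}$ reduces the above inequality to $x(1-x) \le \eps/2 + O(\eps^3)$, whose only solutions are $x \le x_1 := (1-\sqrt{1-2\eps})/2 \approx \eps/2 + \eps^2/4$ or $x \ge x_2 := (1+\sqrt{1-2\eps})/2 \approx 1 - \eps/2$. The Chernoff lower bound gives $x \ge \eps/2 + (1+\delta)\eps^2/4 - \gamma\eps/2 - O(\eps^3)$, which exceeds $x_1$ by $\delta\eps^2/8 - O(\eps^3) > 0$ under the choice $\gamma = \delta\eps/4$ (since $\delta \gg \eps$), ruling out the range $x \le x_1$. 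To exclude $x \ge x_2$, observe that any new start which fails to extend into an edge consumes roughly $M$ queries, so $|R_{T^*}| = O(T^*/M) = O(\eps n/(k-1))$; together with the upper Chernoff bound on $|S_{T^*}|$ this forces $|E_{T^*}| = O(\eps n/(k-1)) \ll x_2 N$ for small $\eps$, excluding $x \ge x_2$ and completing the contradiction.

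The main obstacle lies in the simultaneous satisfaction of $\gamma \ll \delta\eps$ (needed for $x > x_1$) and $\gamma^2 p T^* \to \infty$ (needed to invoke Chernoff); combining these is equivalent to the hypothesis $\delta^2\eps^3 n \to \infty$. Additional care is needed for the Taylor approximation $(1-x/(k-1))^{k-1} \approx 1 - x$, whose $O(x^2)$ correction must be tracked through the quadratic solutions, and for the formal bounds on $|A_{T^*}|$ and $|R_{T^*}|$; these are technical but routine.
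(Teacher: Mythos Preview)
Your argument follows the same core strategy as the paper: run \pathfinder\ to a time $T^*$ (the paper's $T_0$), use Proposition~\ref{proposition:usingchernoff} to control $|S_{T^*}|$, bound $|A_{T^*}|$ via~\eqref{eqn:activeviapath}, and invoke the key inequality $t \ge |E_t|\binom{|N_t|}{k-1}$ coming from the fact that every $(k-1)$-subset of the neutral set must have been queried from each explored vertex. The computations leading to the exclusion of the small root $x \le x_1$ are correct and match the paper's numerics.

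There is, however, a genuine gap in your exclusion of the large root $x \ge x_2$. Your claim that $|R_{T^*}| = O(T^*/M)$ because ``any new start which fails to extend into an edge consumes roughly $M$ queries'' is circular: a new start $J$ uses at least $\binom{|N_{t_J}|}{k-1}$ queries before being declared explored, and this is close to $M$ only if $|E_{t_J}| = o(n)$ --- but bounding $|E_t|$ (equivalently $|D_t|$) from above is precisely the point at issue. Without such a bound, nothing prevents many new starts from being declared explored after very few queries. The paper sidesteps this by introducing the stopping time $T_1$, the \emph{first} time at which $|D_t|$ reaches the target value $(1-\delta\eps/3)(k-1)pT_0$; at $T_1$ the quantity $|N_{T_1}|$ is then known exactly, so the final step becomes a direct linear inequality rather than a quadratic one, and no separate upper bound on $|R_t|$ is needed. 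If you try to make your argument rigorous you will find yourself introducing exactly such a first-hitting time, at which point the quadratic analysis becomes superfluous.
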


We define
$$
\ell_0 := \frac{(1-\delta)\eps^2 n}{4(k-1)^2},
$$
so our goal is to show that \whp\ the \pathfinder\ algorithm
discovers a path of length at least $\ell_0$.
We also define
$$
T_0:= \frac{\eps n \binom{n-1}{k-1}}{2(k-1)} = \frac{\eps n}{2(k-1)^2 p_0}.
$$
We will show that \whp\ at some time $t\le T_0$, we have $\ell_t \ge \ell_0$,
as required.
We begin with the following proposition, which is a simple application of Proposition~\ref{proposition:usingchernoff}.
For completeness, the proof appears in Appendix~\ref{app:algprops}.

\begin{proposition}\label{prop:loosechernoff}
At time $t=T_0$, \whp\ we have
\[
	|A_t \cup E_t|\ge (1-o(\delta \eps))(k-1)pt.
\]
\end{proposition}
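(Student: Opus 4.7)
The plan is to deduce this essentially directly from Proposition~\ref{proposition:usingchernoff}. First I would note that in the loose case we have $j=1$, so $k-j = k-1$ and $a = 1$ (since $a \equiv k \pmod{k-1}$ with $1 \le a \le k-1$), giving $\binom{k-j}{a} = k-1$. The identity
\[
(k-1)pT_0 = (k-1)\cdot\frac{1+\eps}{(k-1)\binom{n-1}{k-1}} \cdot \frac{\eps n\binom{n-1}{k-1}}{2(k-1)} = \frac{(1+\eps)T_0}{\binom{n-1}{k-1}}
\]
shows that the desired lower bound is precisely (up to a $(1+o(\delta\eps))$-factor) the mean given by Proposition~\ref{proposition:usingchernoff} for $|S_t|$. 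Since $S_t \subseteq R_t \cup S_t = A_t \cup E_t$, it suffices to prove the lower bound for $|S_t|$ alone.

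Second, I would invoke Proposition~\ref{proposition:usingchernoff} at $t = T_0$ with an error parameter $\gamma = \gamma(n)$ satisfying both $\gamma = o(\delta\eps)$ and $\gamma^2 T_0 n^{-(k-1)} \to \infty$. This is where the hypothesis $\delta^2\eps^3 n \to \infty$ of Lemma~\ref{lem:loosesupercriticallower} is used: since $T_0 = \Theta(\eps n^k)$, we have $\gamma^2 T_0 n^{-(k-1)} = \Theta(\gamma^2 \eps n)$, and we need $\gamma \gg 1/\sqrt{\eps n}$; meanwhile we need $\gamma \ll \delta\eps$. Both constraints are compatible precisely when $\delta^2\eps^3 n \to \infty$, and one can simply take, e.g., $\gamma := \delta\eps/\omega(n)$ for a sufficiently slowly growing function $\omega(n) \to \infty$.

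With this choice of $\gamma$, Proposition~\ref{proposition:usingchernoff} gives \whp
\[
|S_t| \ge (1-\gamma)\frac{(1+\eps)T_0}{\binom{n-1}{k-1}} = (1-o(\delta\eps))(k-1)pT_0,
\]
and the conclusion follows from $|A_t \cup E_t| \ge |S_t|$. I do not anticipate any substantial obstacle: the only care required is in selecting $\gamma$ so that the Chernoff error is negligible on the scale $\delta\eps$ while the exponent $\gamma^2 pt$ still diverges, and the hypothesis on $\delta^2\eps^3 n$ is tailored exactly for this.
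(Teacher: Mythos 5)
Your proposal is correct and follows essentially the same route as the paper's proof: reduce to $|S_t|$ via $|A_t\cup E_t|\ge |S_t|$, note $(k-1)pT_0=(1+\eps)T_0/\binom{n-1}{k-1}$, and apply Proposition~\ref{proposition:usingchernoff} with a $\gamma$ satisfying $\gamma=o(\delta\eps)$ and $\gamma^2 pT_0\to\infty$, which is exactly what $\delta^2\eps^3 n\to\infty$ permits (the paper takes $\gamma=\delta\eps/(\delta^2\eps^3 n)^{1/3}$, a particular instance of your $\gamma=\delta\eps/\omega(n)$).
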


Let $T_1$ denote the first time $t$ at which
\begin{align}
|A_t\cup E_t| = \left(1-\frac{\delta\eps}{3}\right)(k-1)pT_0 = \left(1-\frac{\delta\eps}{3}\right)(1+\eps)\frac{\eps n}{2(k-1)}\label{eq:disct1}
\end{align}
(recall that we ignore floors and ceilings). Then from Proposition~\ref{prop:loosechernoff},
we immediately obtain the following.
\begin{corollary}\label{cor:loosechernoff}
\Whp\ $T_1\le T_0$.
\end{corollary}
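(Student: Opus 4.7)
The plan is to derive Corollary~\ref{cor:loosechernoff} directly from Proposition~\ref{prop:loosechernoff} by converting a pointwise estimate at time $T_0$ into a first-hitting-time statement. The only structural fact I need is monotonicity: the quantity $|A_t \cup E_t|$ counts the number of discovered $j$-sets, and by construction a $j$-set never leaves the status ``discovered'' once it enters it (it can only transition neutral $\to$ active $\to$ explored). Hence $t \mapsto |A_t \cup E_t|$ is non-decreasing.

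I would carry out the argument as follows. First, apply Proposition~\ref{prop:loosechernoff} at time $t = T_0$ to obtain that whp
\[
|A_{T_0} \cup E_{T_0}| \ge (1 - o(\delta\eps))(k-1)p T_0.
\]
Next, since $o(\delta\eps) < \delta\eps/3$ for all sufficiently large $n$, the right-hand side is at least
\[
\left(1 - \frac{\delta\eps}{3}\right)(k-1) p T_0,
\]
which is precisely the threshold appearing in~\eqref{eq:disct1} that defines $T_1$. By monotonicity of $|A_t \cup E_t|$, if this threshold is already met (or exceeded) at time $T_0$, then the first time it is met must occur at some $t \le T_0$; that is, $T_1 \le T_0$ whp.

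There is essentially no obstacle here; the only mild technicality is that $|A_t \cup E_t|$ increases in discrete jumps (by $1$ at a new start and by $\binom{k-1}{a} = k-1$ when an edge is found, since $j=1$ forces $a=1$), so it may overshoot the exact threshold in~\eqref{eq:disct1} rather than hit it on the nose. This is absorbed by the paper's standing convention of ignoring floors and ceilings, and by reading the definition of $T_1$ in the natural way as the first time the threshold is reached or exceeded.
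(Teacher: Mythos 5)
Your argument is correct and is exactly what the paper intends: the paper derives Corollary~\ref{cor:loosechernoff} as an immediate consequence of Proposition~\ref{prop:loosechernoff}, with the (unstated) monotonicity of $t\mapsto|A_t\cup E_t|$ and the comparison $o(\delta\eps)<\delta\eps/3$ doing the work, just as you spell out. Your remark about the discrete jumps overshooting the threshold is a fair observation and is indeed absorbed by the paper's conventions.
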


We claim furthermore that this inequality implies that we must have
a long loose path.

\begin{proposition}\label{prop:loosefinalcontradiction}
If $T_1\le T_0$, then at time $t=T_1$ we have $\ell_t \ge \ell_0$.
\end{proposition}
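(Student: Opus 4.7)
The plan is to argue by contradiction: assume $\ell_{T_1}<\ell_0$ and derive a contradiction with the identity that exactly $T_1$ queries have been made by the algorithm by time $T_1$. The argument follows the four-step template from the end of Section~\ref{sec:dfs}: bound the number of explored $j$-sets from below, bound the queries emanating from each explored $j$-set from below, multiply, and compare with $T_1\leq T_0$.

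First I would apply \eqref{eqn:activeviapath} together with the contradiction hypothesis to bound $|A_{T_1}|<1+(k-1)\ell_0 = O(\eps^2 n)$, which is a lower-order correction to $|A_{T_1}\cup E_{T_1}|=(1-\delta\eps/3)(1+\eps)\eps n/(2(k-1))=\Theta(\eps n)$ from~\eqref{eq:disct1}. Subtracting and carefully expanding $(1-\delta\eps/3)(1+\eps)-(1-\delta)\eps/2=1+\eps/2+\delta\eps/6-O(\delta\eps^2)$ should yield the strengthened estimate
\[ |E_{T_1}|\ \geq\ \frac{\eps n}{2(k-1)}\left(1+\frac{\eps}{2}+\frac{\delta\eps}{6}-o(\delta\eps)\right). \]
The key additive $\delta\eps/6$ arises from the interaction between the $(1+\eps)$ factor in~\eqref{eq:disct1} and the $(1-\delta)$ factor hidden in $\ell_0$, and it is what will drive the contradiction.

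Next, for each $J\in E_{T_1}$ explored at some time $t_J\leq T_1$, I would lower-bound the number of queries made from $J$. The key observation specific to $j=1$ is that $V(P_t)\subseteq A_t\cup E_t$ at every time $t$: every vertex in the current path was at some earlier moment added to $A$ (as a new start or as part of a batch) and never leaves $A\cup E$. Combined with the monotonicity of $|A_t\cup E_t|$, this gives
\[ |V(P_{t_J})\cup E_{t_J}|\ \leq\ |A_{t_J}\cup E_{t_J}|\ \leq\ |A_{T_1}\cup E_{T_1}|, \]
a bound uniform in $t_J$ and crucially independent of path-length fluctuations. By Definition~\ref{definition:forbidden} specialised to $j=1$, the forbidden $(k-1)$-sets at $t_J$ are precisely those meeting $V(P_{t_J})\cup E_{t_J}$; every non-forbidden $(k-1)$-set disjoint from $J$ must have been queried from $J$ before $t_J$. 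Writing $b=|A_{T_1}\cup E_{T_1}|$ and using a standard expansion of $\binom{n-b}{k-1}/\binom{n-1}{k-1}\geq 1-b(k-1)/(n-1)$ would then give at least $(1-\eps/2-\eps^2/2+o(\eps^2))\binom{n-1}{k-1}$ queries from $J$.

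Multiplying the two estimates and substituting $T_0=\eps n\binom{n-1}{k-1}/(2(k-1))$ yields
\[ T_1\ \geq\ |E_{T_1}|\,(1-\eps/2-\eps^2/2+o(\eps^2))\binom{n-1}{k-1}\ \geq\ T_0\bigl(1+\delta\eps/6-3\eps^2/4-o(\delta\eps)\bigr), \]
and the hypothesis $\delta\gg\eps$ forces the gain $\delta\eps/6$ to strictly exceed the loss $3\eps^2/4$, giving $T_1>T_0$ and contradicting $T_1\leq T_0$. The principal obstacle will be exactly this delicate error-term accounting: the proof wins only by a $\Theta(\delta\eps)$ margin, so both the subleading $\eps^2$ term obtained from subtracting $(k-1)\ell_0$ and the subleading $\eps^2$ term obtained from the binomial-ratio expansion must be tracked precisely, and the slightly stronger hypothesis $\delta\gg\eps$ (rather than merely $\delta=\Omega(\eps)$) is essential to beat these $O(\eps^2)$ losses.
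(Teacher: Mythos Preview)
Your proposal is correct and follows essentially the same route as the paper. Both arguments bound $|E_{T_1}|$ from below via~\eqref{eq:disct1} and~\eqref{eqn:activeviapath}, then lower-bound the number of queries per explored vertex by $\binom{n-|A_{T_1}\cup E_{T_1}|}{k-1}$, and finally multiply to obtain $T_1>T_0$; your explicit invocation of the inclusion $V(P_t)\subseteq A_t\cup E_t$ (valid precisely because $j=1$) is exactly the content of the paper's remark that ``no vertex of $N_t$ can possibly have been forbidden at any time $t'\le t$'', and the constants $\delta\eps/6$ versus the paper's $\delta\eps/7$ differ only because the paper absorbs the $o(\delta\eps)$ error into a cleaner constant.
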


\begin{proof}
Suppose for a contradiction that $T_1\le T_0$, but that at time $t=T_1$ we have $\ell_t < \ell_0$.
Then by~\eqref{eq:disct1} and~\eqref{eqn:activeviapath}
\begin{align*}
|E_t| & = |A_t\cup E_t| -|A_t|\\
 & \ge \left(1-\frac{\delta\eps}{3}\right)(k-1)pT_0 - ((k-1)\ell_0 +1) \\
& = \left(1+\eps-\frac{\delta\eps}{3} - O(\delta\eps^2)\right)(k-1)p_0 T_0 - \frac{(1-\delta)\eps^2 n}{4(k-1)} - o(\delta\eps^2 n) \\
& = \left(1+\eps-\frac{\delta\eps}{3}-\frac{(1-\delta)\eps}{2}-O(\delta\eps^2) - o(\delta\eps)\right)(k-1)p_0 T_0 \\
& \ge \left(1+ \frac{\eps}{2} + \frac{\delta\eps}{7} \right)(k-1) p_0 T_0,
\end{align*}
where we have used the fact that $(k-1)p_0 T_0 = \frac{\eps n}{2(k-1)} = \Theta(\eps n)$, 
and that $\delta\eps^2 n \ge \eps^3 n \to \infty$.

On the other hand, $N_t$, the set of neutral vertices, satisfies 
\begin{align*}
|N_t| = n-|A_t \cup E_t| & \stackrel{\eqref{eq:disct1}}{=} n-(1-o(\delta\eps))(1+\eps)\frac{\eps n}{2(k-1)} \\
& = \left(1-\frac{\eps}{2(k-1)} + o(\delta\eps) + O(\eps^2)\right)n.
\end{align*}
Note that no vertex of $N_t$ can possibly have been forbidden at any time $t' \leq t$.
This implies, since the vertices of $E_t$ are fully explored, that from each explored vertex we certainly queried any
$k$-set containing the vertex and $k-1$ vertices of $N_t$.
Thus the number of queries $t$ that we have made so far certainly satisfies
\begin{align*}
t & \ge |E_t| \binom{|N_t|}{k-1}\\
& \ge \left(1+\frac{\eps}{2} + \frac{\delta\eps}{7} \right)(k-1)p_0 T_0
\cdot
\left(1+O\left(\frac{1}{n}\right)\right)\frac{\left(1-\frac{\eps}{2(k-1)} + o(\delta\eps) + O(\eps^2)\right)^{k-1}n^{k-1}}{(k-1)!}\\
& = \left(1+\frac{\eps}{2} + \frac{\delta\eps}{7}\right)T_0 \cdot
\left(1+O\left(\frac{1}{n}\right)\right)\left(1-\frac{\eps}{2} + o(\delta\eps) +O(\eps^2)\right) \\
& = \left(1+\frac{\delta \eps}{7} + o(\delta\eps) + O(\eps^2)\right)T_0 \\
& > T_0,
\end{align*}
which gives the required contradiction since we assumed that $t= T_1 \le T_0$.
\end{proof}

\begin{proof}[Proof of Lemma~\ref{lem:loosesupercriticallower}]
The statement of Lemma~\ref{lem:loosesupercriticallower}
follows directly from
Corollary~\ref{cor:loosechernoff} and Proposition~\ref{prop:loosefinalcontradiction}.
\end{proof}

Let us note that although we proved that \whp\ $\ell_t \ge \ell_0$
at \emph{some} time $t\le T_0$, with a small amount of extra work we could
actually prove that this even holds at exactly $t=T_0$: we would need a
corresponding upper bound in Proposition~\ref{prop:loosechernoff},
which follows from a Chernoff bound on the number of edges
discovered so far and an upper bound on the number of new starts we have made
by time $T_0$.

\section{Algorithm analysis: high-order case ($j\ge 2$)}\label{sec:alganalysis}

In the case $j\ge 2$, 
we will use the \pathfinder\ algorithm to study $j$-tight paths in $H^k(n,p)$
by running the algorithm up to a certain \emph{stopping time} $\Tstop$, i.e.\ until
we have made $\Tstop$ queries. In order to define $\Tstop$, we need some additional definitions.

Given some time $t\ge 0$ let $D_t$ denote the set of all $j$-sets which are discovered by time $t$.
With a slight abuse of notation, we will sometimes also use $D_t$ to denote the
$j$-uniform hypergraph on vertex set $[n]$ with edge set $D_t$.
Note that a $j$-set $J$ lies in $D_t$ if and only if there exists $t' \leq t$
such that $J \in A_{t'}$, or in other words, every $j$-set which is discovered at time $t$ was active
at some time $t'\le t$.
Also, note that for every $t_1 \leq t_2$, $D_{t_1} \subseteq D_{t_2}$, i.e.\
the sequence of discovered $j$-sets is always increasing (although the sequence of active sets $A_t$ is not).

Suppose that $0\le i \le j$ and that $I$ is an $i$-set. 
Then define $d(I)=d_t(I)=\deg_{D_t}(I)$ to be the number of $j$-sets of $D_t$ that contain $I$.

\begin{definition}
Let $\eps \ll \delta \le 1$ be as in Theorem~\ref{thm:mainresult}\ref{item:thm-supercritical},\footnote{Recall
from Remark~\ref{rem:weaker} that we will not actually use the additional condition $\delta \gg \frac{\ln n}{\eps^2 n}$
for the proof of the lower bound, c.f.\ Lemma~\ref{lem:mainlemmastoppingtime}.}
and recall that $|R_t|$ is the number of new starts made by time $t$.
 Let
 $$
 C_{k,j,j-1}\gg C_{k,j,j-2}\gg \dotsb \ge C_{k,j,0} \gg 1
 $$
 be some sufficiently large constants
 and let $0<\beta \ll 1$ be a sufficiently small constant.
 Define
 $$
 T_0:=\frac{n^{k-j+1}}{\eps}.
 $$
We define $\Tstop$ to be the smallest time $t$ such that one of the following stopping conditions hold:
\begin{enumerate}[label=\textnormal{\textbf{(S\arabic*)}}]
	\item\label{stoplength} \pathfinder\ found a path of length
	at least $(1-\delta)\frac{\eps n}{(k-j)^2}$;
	\item\label{stoptime} $t=T_0$;
	\item\label{stopnewstarts} $|R_t|\geq 2(k-j)!\sqrt{\frac{tn^\beta}{n^{k-j}}} +\frac{n^\beta}{2}$;
	\item\label{stopdegree} There exists some $0\le i \le j-1$ and an $i$-set $I$ with
	$d_t(I)\geq\frac{C_{k,j,i}t}{n^{k-j+i}}+n^{\beta}$.
\end{enumerate}
\end{definition}

We first observe that $\Tstop$ is well-defined.

\begin{claim}
If \pathfinder\ is run on inputs $k,j$ and any $k$-uniform hypergraph $H$ on $[n]$,
then one of the four stopping conditions is always applied.
\end{claim}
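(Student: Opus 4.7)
The plan is to argue by contradiction: I suppose that \pathfinder{} has an execution on some $k$-uniform hypergraph $H$ on $[n]$ for which none of \ref{stoplength}--\ref{stopdegree} ever holds, and derive a contradiction. First I would note that \pathfinder{} always halts in some finite time $T$, since each iteration of the inner \textbf{while} loop either makes a query (and by~\ref{cond:queryrepeat} each pair $(J,K)$ is queried at most once, so queries are bounded by $\binom{n}{j}\binom{n-j}{k-j}$) or declares a $j$-set explored (which can happen at most $\binom{n}{j}$ times). As~\ref{stoptime} is assumed never to hold, we must have $T < T_0$.

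Next I would argue that $|D_T| = \binom{n}{j}$. Indeed, \pathfinder{} terminates when the outer \textbf{while} loop exits, i.e.\ when $N = \emptyset$, and that test is only reached after the inner loop has emptied $A$. Since the algorithm maintains the invariant that $(N,A,E)$ is a partition of $\binom{[n]}{j}$, at termination every $j$-set has passed through $A$ into $E \subseteq D_T$.

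The contradiction then comes from applying~\ref{stopdegree} at $i = 0$ with $I = \emptyset$: the degree $d_T(\emptyset)$ equals $|D_T| = \binom{n}{j}$, whereas the assumption that~\ref{stopdegree} never holds gives
\[
\binom{n}{j} < \frac{C_{k,j,0}\, T}{n^{k-j}} + n^{\beta} \leq \frac{C_{k,j,0}\, T_0}{n^{k-j}} + n^{\beta} = \frac{C_{k,j,0}\, n}{\eps} + n^{\beta}.
\]
Under the standing hypothesis $\eps^3 n \to \infty$ we have $n/\eps = o(n^{4/3})$, and combined with $j \geq 2$ and $\beta \ll 1$ this makes the right-hand side $o\!\left(\binom{n}{j}\right)$, a contradiction for large $n$. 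I do not expect any genuine obstacle: the claim is essentially a sanity check that the four stopping conditions are collectively exhaustive, and the only mildly subtle point is the bookkeeping that forces $|D_T| = \binom{n}{j}$ at termination.
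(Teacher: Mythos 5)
Your proposal is correct and follows essentially the same route as the paper: if no stopping condition fires, the algorithm runs to natural termination with every $j$-set explored, and then either $t \ge T_0$ forces~\ref{stoptime} or $t \le T_0$ forces~\ref{stopdegree} with $i=0$, $I=\emptyset$, since $d_t(\emptyset) = \binom{n}{j}$ vastly exceeds $C_{k,j,0}T_0/n^{k-j} + n^\beta$. The paper states this directly rather than by contradiction and omits the explicit termination bookkeeping, but the substance is identical.
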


\begin{proof}
If none of the stopping conditions is applied, the algorithm will
continue until all $j$-sets are explored
(since a new start is always possible from any neutral $j$-set).
If this occurs at time $t\ge T_0$, then~\ref{stoptime} would already
have been applied (if none of the other stopping conditions were
applied first). On the other hand,
if this occurs at time $t\le T_0$, then~\ref{stopdegree} is certainly
satisfied with $i=0$ and $I=\emptyset$.
\end{proof}

We will often use the fact that for $t\le \Tstop$, the (non-strict) inequalites
in stopping conditions~\ref{stoplength},~\ref{stopnewstarts} and~\ref{stopdegree} are reversed.
For example, for $t\le \Tstop$ we have $|R_t|\le 2(k-j)!\sqrt{\frac{tn^\beta}{n^{k-j}}} +n^\beta$.
This is because
$$|R_t|\le |R_{t-1}|+1 < 2(k-j)!\sqrt{\frac{(t-1)n^\beta}{n^{k-j}}} +n^\beta +1,$$
where the second inequality holds because we did not apply~\ref{stopnewstarts} by time $t-1$
(and recall that we ignore floors and ceilings). In such a situation, we will slightly abuse terminology
by saying that ``by~\ref{stopnewstarts}'' we have $|R_t|\le 2(k-j)!\sqrt{\frac{tn^\beta}{n^{k-j}}} +n^\beta$.

Our main goal is to show that \whp\ it is~\ref{stoplength} which is applied first,
i.e.\ the algorithm has indeed discovered a path of the appropriate length.

\begin{lemma}\label{lem:mainlemmastoppingtime}
	Let $k,j\in\NN$ satisfy $2 \leq j \leq k-1$. Let $a\in\NN$ be the unique integer satisfying
	$1 \leq a \leq k-j$ and $a \equiv k \bmod (k-j)$.
	Let $\eps = \eps(n) \ll 1$ satisfy $\eps^3 n \xrightarrow{n\to\infty} \infty$
	and let
	$$
	p_0 = p_0(n;k,j) := \frac{1}{\binom{k-j}{a} \binom{n-j}{k-j}}.
	$$
	Let $L$ be the length of the longest $j$-tight path in $H^k(n,p)$,
	and let $\delta \gg \eps$.

	Suppose \pathfinder\ is run with input $k,j$ and $H=H^k(n,p)$. Then \whp\ \ref{stoplength} is applied.
	In particular, \whp\
	$$L \ge \ell_{\Tstop} =(1-\delta)\frac{\eps n}{(k-j)^2}.$$
\end{lemma}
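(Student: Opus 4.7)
The plan is to prove the contrapositive: conditioned on \pathfinder\ reaching its stopping time $\Tstop$ without triggering \ref{stoplength}, I will show that with high probability none of \ref{stoptime}, \ref{stopnewstarts} or \ref{stopdegree} can be the first condition to fire either. Since some stopping condition must apply, this yields the desired conclusion. The core of the argument is the exclusion of \ref{stoptime} via the four-step counting scheme sketched in the proof-strategy paragraph at the end of Section~\ref{sec:dfs}; the exclusion of \ref{stopnewstarts} and \ref{stopdegree} will be handled by separate concentration lemmas that I indicate at the end.

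For the main contradiction, assume that neither \ref{stoplength}, \ref{stopnewstarts} nor \ref{stopdegree} has triggered by time $t = T_0 = n^{k-j+1}/\eps$, and set $\ell_0 := (1-\delta)\eps n/(k-j)^2$, so $\ell_t < \ell_0$ throughout. Proposition~\ref{proposition:usingchernoff} with $\gamma = o(\delta\eps)$ (still satisfying $\gamma^2 T_0 n^{-(k-j)} \to \infty$) gives with high probability
\[ |S_{T_0}| \ge (1-o(\delta\eps))\frac{(1+\eps)T_0}{\binom{n-j}{k-j}} = \Theta(n/\eps). \]
From \eqref{eqn:activeviapath} we have $|A_{T_0}| \le 1 + \binom{k-j}{a}\ell_0 = O(\eps n)$, which is a negligible fraction of $|S_{T_0}|$, so $|E_{T_0}| \ge (1 - o(\delta\eps))(1+\eps)T_0/\binom{n-j}{k-j}$.

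The crucial step is bounding from below the number of queries made from each $J \in E_{T_0}$. At the time $t_J \le T_0$ when $J$ was declared explored, every $(k-j)$-set $X \subseteq [n]\setminus J$ that was not in $F_{t_J}$ must already have been queried from $J$. Proposition~\ref{prop:boundforbidden1length} gives
\[ |F^{(1)}_{t_J}| \le (k-j)\ell_0\binom{n-j-1}{k-j-1} = \bigl((1-\delta)\eps + o(\eps)\bigr)\binom{n-j}{k-j}. \]
For $|F^{(2)}_{t_J}|$, I classify a witnessing $J' \in E_{t_J}$ by $i := |J'\cap J| \in \{\max(0,2j-k),\dots,j-1\}$: $J'$ forces $J'\setminus J \subseteq X$, leaving $\binom{n-2j+i}{k-2j+i} = O(n^{k-2j+i})$ choices of $X$. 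Since \ref{stopdegree} has not triggered, $d_{t_J}(I) \le C_{k,j,i}T_0/n^{k-j+i} + n^\beta$ for every $i$-subset $I \subseteq J$, so summing over $i$ and over the $\binom{j}{i} = O(1)$ choices of $I$ gives
\[ |F^{(2)}_{t_J}| = O\!\left(\frac{T_0}{n^j}\right) + O(n^{k-j-1+\beta}) = o(\eps)\binom{n-j}{k-j}, \]
using $j \ge 2$, $\eps^3 n \to \infty$ and $\beta$ sufficiently small. Therefore each $J \in E_{T_0}$ produced at least $\bigl(1 - (1-\delta)\eps - o(\eps)\bigr)\binom{n-j}{k-j}$ queries, and
\[ T_0 \;\ge\; |E_{T_0}|\cdot\bigl(1-(1-\delta)\eps-o(\eps)\bigr)\binom{n-j}{k-j} \;\ge\; \bigl(1 + \delta\eps - o(\delta\eps)\bigr) T_0, \]
where the cross term $O(\eps^2)$ is absorbed by $\delta\eps$ since $\delta \gg \eps$. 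For large $n$ the right-hand side strictly exceeds $T_0$, contradicting $\Tstop = T_0$.

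The main obstacle of the whole proof is the exclusion of \ref{stopdegree} (the exclusion of \ref{stopnewstarts} is comparatively routine, exploiting the concentration of $|S_t|$ given by Proposition~\ref{proposition:usingchernoff} together with an accounting of how much of the current branch must be exhausted between new starts). For \ref{stopdegree} one must control $d_t(I)$ uniformly over all $i$-sets $I$ and every $i \in \{0,\dots,j-1\}$, where the degrees evolve via both the random edge outcomes and the combinatorics of the extendable partitions that generate new batches. I would proceed inductively from $i = j-1$ downward, using at each level a union bound over $i$-sets combined with a Chernoff/martingale concentration for $d_t(I)$ conditional on the higher-$i$ (and hence more restrictive) degree bounds already established.
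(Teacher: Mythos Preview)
Your main contradiction for excluding \ref{stoptime} is correct and is essentially what the paper does (packaged there as Lemma~\ref{lem:forbiddensets} together with Case~2 of Proposition~\ref{prop:newstarts:probabilistic} and Proposition~\ref{prop:newstartsbound}). The paper in fact handles \ref{stoptime} and \ref{stopnewstarts} simultaneously: in both cases one shows $|E_t| \ge (1-2\delta\eps/5)(1+\eps)t/\binom{n-j}{k-j}$, and the same ``too many queries'' inequality gives the contradiction. For \ref{stopnewstarts} this requires a non-trivial choice of $\gamma_t = \sqrt{n^{k-j+\beta}/t}$ so that the guaranteed $|R_t|$ new starts compensate the Chernoff error in $|S_t|$; your one-line description is in the right spirit but understates the work.

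The genuine gap is in your sketch for \ref{stopdegree}. You propose to induct \emph{downward} from $i = j-1$, using higher-$i$ degree bounds to control lower-$i$ ones. This goes in the wrong direction. To bound $d_t(I)$ for an $i$-set $I$ one must bound the number of \emph{jumps} to $I$: queries of $k$-sets containing $I$ made from $j$-sets $J$ with $J \not\supseteq I$. Classifying such $J$ by $Z = J \cap I$ with $|Z| = z \le i-1$, the number of candidate $J$ is at most $d_t(Z)$, and this is precisely a degree bound for a \emph{smaller} set. So the induction must run from $i=0$ upward, with the hierarchy of constants $C_{k,j,0} \ll C_{k,j,1} \ll \dotsb \ll C_{k,j,j-1}$ absorbing the multiplicative losses at each step (this is why the paper orders the constants that way). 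A downward induction via the identity $(j-i)\,d_t(I) = \sum_{I' \supset I,\,|I'|=i+1} d_t(I')$ would turn the additive $n^\beta$ error into $\Theta(n^{1+\beta})$ at each step, destroying the bound; and in any case you would have no base case, since $i = j-1$ is the hardest instance, not the easiest.
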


For the rest of this section, we will assume that all parameters are as
defined in Lemma~\ref{lem:mainlemmastoppingtime}.

We first prove an auxiliary lemma which gives an upper bound on the number of forbidden $(k-j)$-sets
up to time $\Tstop$.
Recall that $F^{(1)}_t$ and $F^{(2)}_t$ denote the sets of forbidden $(k-j)$-sets
at time $t$ of types $1$ and $2$, respectively.
Let $f^{(i)} = f^{(i)}_t:=|F^{(i)}_t|$ for $i=1,2$.

\begin{lemma}\label{lem:forbiddensets}
	Let $t\le \Tstop$.
	Then
	\[f^{(1)}+f^{(2)}\leq (1-\delta/2)\eps\binom{n-j}{k-j}\;.\]
	In particular, 
	from every explored $j$-set we made at least 
	\[ (1-\eps +\delta\eps/2)\binom{n-j}{k-j}\]
	queries.
\end{lemma}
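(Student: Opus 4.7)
My plan is to split $f^{(1)}_t+f^{(2)}_t$ and bound the two pieces using the available stopping conditions: the type-$1$ count is essentially tight and contributes the whole $(1-\delta)\eps\binom{n-j}{k-j}$ term, while the type-$2$ count is a genuinely lower-order correction obtained from the degree control in~\ref{stopdegree}. For $f^{(1)}_t$ I would apply Proposition~\ref{prop:boundforbidden1length} together with the fact that $t\le\Tstop$ means~\ref{stoplength} has not triggered, so $\ell_t\le (1-\delta)\eps n/(k-j)^2$. Using the identity $(k-j)\binom{n-j-1}{k-j-1}=\tfrac{(k-j)^2}{n-j}\binom{n-j}{k-j}$ this yields $f^{(1)}_t\le (1-\delta)\eps\cdot\tfrac{n}{n-j}\binom{n-j}{k-j}=(1-\delta+O(1/n))\eps\binom{n-j}{k-j}$.

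For $f^{(2)}_t$ the argument is a careful enumeration. Every $X\in F^{(2)}_t$ contains $J'\setminus J$ for some explored $j$-set $J'\ne J$, and writing $i=|J'\cap J|$ the constraints $J'\subset J\cup X$ and $|X|=k-j$ force $\max(0,2j-k)\le i\le j-1$. For fixed $i$, each $J'$ with $|J'\cap J|=i$ contains the unique $i$-subset $I=J'\cap J$ of $J$, so a union bound over the $\binom{j}{i}$ choices of $I$ together with the degree bound from~\ref{stopdegree} (applicable precisely for $0\le i\le j-1$) gives
\[
f^{(2)}_t\le\sum_{i=\max(0,2j-k)}^{j-1}\binom{j}{i}\left(\frac{C_{k,j,i}t}{n^{k-j+i}}+n^\beta\right)\binom{n-2j+i}{k-2j+i}.
\]
Since $\binom{n-2j+i}{k-2j+i}=O(n^{k-2j+i})$, each summand is $O(t/n^j)+O(n^{k-j-1+\beta})$. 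Substituting $t\le T_0=n^{k-j+1}/\eps$ from~\ref{stoptime} bounds the dominant piece by $O(n^{k-2j+1}/\eps)$, whose ratio to $\delta\eps\binom{n-j}{k-j}=\Theta(\delta\eps n^{k-j})$ equals $O(1/(\delta\eps^2 n^{j-1}))$; this tends to $0$ because $j\ge 2$ together with $\delta\gg\eps$ and $\eps^3n\to\infty$ imply $\delta\eps^2 n^{j-1}\gg\eps^3 n\to\infty$. The $n^\beta$ contribution is $o(\delta\eps n^{k-j})$ for $\beta$ sufficiently small (for instance any $\beta<1/3$), so $f^{(2)}_t=o(\delta\eps\binom{n-j}{k-j})$.

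Adding the two estimates, $f^{(1)}_t+f^{(2)}_t\le(1-\delta/2)\eps\binom{n-j}{k-j}$ for large $n$, proving the first claim. The ``in particular'' statement follows at once: when a $j$-set $J$ is declared explored at some time $t'\le t$, the inner loop exits only when the queryable family $\cK$ is empty, which means every $(k-j)$-set $X\subset V(H)\setminus J$ was either queried from $J$ at some earlier time or lies in $F_{t'}$; applying the first bound at time $t'$ shows that the number of queries from $J$ is at least $(1-(1-\delta/2)\eps)\binom{n-j}{k-j}=(1-\eps+\delta\eps/2)\binom{n-j}{k-j}$. The main obstacle is bounding $f^{(2)}_t$ sharply enough: a crude bound using only $|D_t|$ would overshoot by a factor of $\Theta(n^{j-1})$, so one really has to stratify by the intersection size $i=|J'\cap J|$ and invoke~\ref{stopdegree} at each level, combined with the time cap $t\le T_0$ and the hypothesis $\eps^3 n\to\infty$ to push the contribution below the target $(1-\delta/2)\eps\binom{n-j}{k-j}$.
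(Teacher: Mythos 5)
Your proposal is correct and follows essentially the same route as the paper: Proposition~\ref{prop:boundforbidden1length} plus the path-length cap from~\ref{stoplength} for $f^{(1)}_t$, and for $f^{(2)}_t$ the same stratification by the intersection size $i=|J'\cap J|$ combined with the degree bounds from~\ref{stopdegree}, the time cap $t\le T_0$, and the hypotheses $j\ge 2$, $\delta\gg\eps$, $\eps^3 n\to\infty$ to make the type-$2$ contribution $o(\delta\eps\binom{n-j}{k-j})$. The only cosmetic difference is that you restrict the sum to $i\ge\max(0,2j-k)$ where the paper instead interprets the vacuous binomial coefficients as zero.
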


\begin{proof}
	Due to condition~\ref{stoplength}, the length $\ell_{t}$ of the path $P_{t}$ at any time $t$
	is at most $\frac{(1-\delta)\eps n}{(k-j)^2}$.
	Thus by Proposition~\ref{prop:boundforbidden1length} we have that
	\begin{equation}\label{eq:f1bound}
	f^{(1)}\leq \frac{(1-\delta)\eps n}{(k-j)}\cdot\binom{n-j-1}{k-j-1}
	\leq\left(1-\frac{2\delta}{3}\right)\eps\binom{n-j}{k-j}\;.
	\end{equation}
	By condition~\ref{stoptime}, we have $\Tstop\leq \frac{n^{k-j+1}}{\eps}$. 
	Furthermore, by condition~\ref{stopdegree}, for any $0\leq i\leq j-1$ and any $i$-set $I$ we have
	$$d_{t}(I)\leq d_{\Tstop}(I)
	\le \frac{C_{k,j,i}\Tstop}{n^{k-j+i}} + n^\beta
	\leq \frac{C_{k,j,i}}{\eps n^{i-1}} +n^\beta.$$
	Observe that if $J$ is the current $j$-set, any forbidden $(k-j)$-set of type 2 can be identified by:
	\begin{itemize}
	\item choosing an integer $i=0,\ldots,j-1$;
	\item choosing a proper subset $I\subset J$ of size $i$ (there are $\binom{j}{i}$ possibilities);
	\item choosing an explored
	(and therefore discovered) $j$-set $J'\supset I$ such that $(J'\setminus I)\cap J = \emptyset$,
	(at most $d_t(I)$ possibilities);
	\item choosing a $k$-set $K$ containing both $J$ and $J'$ (there are $\binom{n-2j+i}{k-2j+i}$ possibilities).
	\end{itemize}
	Then the forbidden $(k-j)$-set is $K\setminus J$.
	Note that if $j>k/2$, then $k-2j+i$ may be negative for some values of $i$.
	In this case we interpret $\binom{n-2j+i}{k-2j+i}$ to be zero.
	
	Therefore  we obtain	
	\begin{align*}
	f^{(2)}&\leq \sum_{i=0}^{j-1}\binom{j}{i}\cdot \left(\max_{|I|=i} d_{t}(I)\right)\cdot\binom{n-2j+i}{k-2j+i}\\
	&\leq \sum_{i=0}^{j-1}2^j\cdot \left(\frac{C_{k,j,i}}{\eps n^{i-1}} +n^\beta\right)
		\cdot O(n^{-j+i}) \binom{n-j}{k-j}\\
	& = O\left(\frac{1}{\delta\eps^2 n^{j-1}} + \frac{n^{\beta}}{\delta \eps n} \right) \delta \eps \binom{n-j}{k-j}.
	\end{align*}
	Now recall that $\delta \gg \eps$ and that we are considering the case $j\ge 2$,
	which means that $\delta\eps^2 n^{j-1} \ge \eps^3 n \to \infty$.
	Furthermore $\beta \ll 1$, which implies that $\delta\eps n^{1-\beta} \ge \eps^2 n^{2/3}\to \infty$,
	so we obtain
	$$
	f^{(2)} = o(1)\delta\eps\binom{n-j}{k-j}.
	$$ 
	Together with~\eqref{eq:f1bound}, this leads to
	\[f^{(1)}+f^{(2)}\leq \left(1-\frac{2\delta}{3}+o(\delta)\right)\eps\binom{n-j}{k-j} \le (1-\delta/2)\eps\binom{n-j}{k-j} \]
	as claimed.
\end{proof}

Our aim now is to prove Lemma \ref{lem:mainlemmastoppingtime},
i.e.\ that \whp\ stopping condition~\ref{stoplength} is applied. Our strategy
is to show that \whp\ each of the other three stopping conditions is \emph{not} applied.
The arguments for~\ref{stoptime} and~\ref{stopnewstarts} are almost identical,
so it is convenient to handle them together.
We begin with the following proposition.

\begin{proposition}\label{prop:newstarts:probabilistic}
There exists an event $\cA$ such that:
\begin{enumerate}[label=\normalfont{(\roman*)}]
\item $\Pr(\cA)= 1-o(1);$
\item if $\cA$ holds and either~\ref{stoptime} or~\ref{stopnewstarts} is applied
at time $t= \Tstop$, then
$$
|E_t| \geq \frac{(1-2\delta\eps/5)(1+\eps)t}{\binom{n-j}{k-j}}.$$
\end{enumerate}
\end{proposition}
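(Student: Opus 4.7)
The plan rests on the identity $|E_t|=|R_t|+|S_t|-|A_t|$, which holds because every discovered $j$-set is either a new start or a standard $j$-set, and independently either active or explored. Because \ref{stoplength} is not triggered at any $t\le\Tstop$, the path-length bound $\ell_t\le(1-\delta)\eps n/(k-j)^2$ together with \eqref{eqn:activeviapath} gives $|A_t|=O(\eps n)$. Thus it suffices to lower bound $|R_t|+|S_t|$ by the required quantity; only the second term requires probabilistic input.

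I would define $\cA$ to be the event that $|S_t|\ge (1-\delta\eps/10)(1+\eps)t/\binom{n-j}{k-j}$ holds simultaneously for all $t$ in a suitable range $[t_1,T_0]$, with $t_1\le T_0$ to be chosen. By Proposition~\ref{proposition:usingchernoff} applied with $\gamma=\delta\eps/10$, each individual bound fails with probability $\exp(-\Theta((\delta\eps)^2 t/n^{k-j}))$; exploiting the monotonicity of $|S_t|$ in $t$ and taking a union bound over a geometrically spaced set of times costs only a logarithmic factor, so $\Pr(\cA)=1-o(1)$ provided $t_1$ is large enough. The hypotheses $\delta\gg\eps$ and $\eps^3 n\to\infty$ imply $(\delta\eps)^2 T_0/n^{k-j}=\Theta(\delta^2\eps n)\to\infty$, leaving ample margin to pick such a $t_1$ strictly below $T_0$.

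Given $\cA$, I split on the two stopping conditions. If \ref{stoptime} is applied then $t_0=T_0\ge t_1$, and Chernoff combined with $|A_{t_0}|=O(\eps n)$ gives
\[|E_{t_0}|\ge (1-\delta\eps/10)(1+\eps)T_0/\binom{n-j}{k-j}-O(\eps n)\ge (1-2\delta\eps/5)(1+\eps)T_0/\binom{n-j}{k-j},\]
because the Chernoff slack $(3\delta\eps/10)(1+\eps)T_0/\binom{n-j}{k-j}=\Theta(\delta n)$ comfortably absorbs the $O(\eps n)$ correction, using $\delta\gg\eps$. If instead \ref{stopnewstarts} is triggered at some $t_0$, I would split on whether $t_0\ge t_1$ (which mirrors the above argument, with $|R_{t_0}|\ge 0$ as a free bonus) or $t_0<t_1$. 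In the latter regime the target $(1+\eps)t_0/\binom{n-j}{k-j}$ is itself small, and I would use the explicit lower bound $|R_{t_0}|\ge 2(k-j)!\sqrt{t_0 n^\beta/n^{k-j}}+n^\beta/2$ from \ref{stopnewstarts} directly: the square-root term dominates the target whenever $t_0\lesssim n^{k-j+\beta}$, while the additive $n^\beta/2$ absorbs the residual $O(\eps n)$ error (supplemented, where necessary, by whatever partial Chernoff information remains).

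The main obstacle is the careful calibration of $t_1$, $\beta$ and the Chernoff slack so that the Chernoff-based argument and the \ref{stopnewstarts}-based argument together cover every possible $t_0\le T_0$. The square-root rate $\sqrt{t_0 n^\beta/n^{k-j}}$ in \ref{stopnewstarts} is tuned precisely to match $(1+\eps)t_0/\binom{n-j}{k-j}$ at the transition $t_0\approx n^{k-j+\beta}$, and checking that the error budgets close in the intermediate regime uses the full strength of the assumptions $\eps^3 n\to\infty$ and $\delta\gg\eps$ to keep $(\delta\eps)^2 E(t_0)$ large enough for the Chernoff step.
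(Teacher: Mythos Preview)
Your overall strategy---the identity $|E_t|=|R_t|+|S_t|-|A_t|$, Chernoff for $|S_t|$, and a case split on which stopping condition fires---matches the paper's. However, there are two concrete gaps.

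First, the claim that ``the additive $n^\beta/2$ absorbs the residual $O(\eps n)$ error'' is false: since $\eps^3 n\to\infty$ we have $\eps n\gg n^{2/3}$, whereas $\beta\ll 1$ gives $n^\beta\ll n^{2/3}$. The observation you are missing is that when \ref{stopnewstarts} fires, a new start has just been made, so $A$ was empty and $|A_{t_0}|=0$. This also repairs your ``mirrors the above'' case (\ref{stopnewstarts} with $t_0\ge t_1$), where your slack $\Theta(\delta\eps)\cdot t_0/n^{k-j}$ is in general far too small to absorb $O(\eps n)$ when $t_0\ll T_0$.

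Second, even granting $|A_{t_0}|=0$, your fixed Chernoff slack $\gamma=\delta\eps/10$ cannot close the range. The union bound survives down to $t_1$ only if $(\delta\eps)^2 t_1/n^{k-j}\to\infty$, forcing $t_1\gg n^{k-j}/(\delta\eps)^2$; but the \ref{stopnewstarts}-only argument (making $\sqrt{t_0 n^\beta/n^{k-j}}$ dominate $t_0/n^{k-j}$) works only for $t_0\lesssim n^{k-j+\beta}$. These ranges meet only if $(\delta\eps)^2 n^\beta\to\infty$, which fails e.g.\ for $\eps=n^{-1/4}$ and $\delta=\Theta(\eps)$. The hypotheses $\eps^3 n\to\infty$ and $\delta\gg\eps$ do not rescue this, contrary to your final paragraph.

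The paper's fix is a \emph{time-dependent} slack $\gamma_t=\sqrt{n^{k-j+\beta}/t}$ for $t<T_0$, giving Chernoff exponent $\gamma_t^2 pt=\Theta(n^\beta)$ uniformly in $t$, so the union bound over all $t\le T_0$ costs nothing. The point is that the \ref{stopnewstarts} threshold is calibrated to this $\gamma_t$: when it fires, $|R_t|\ge 2(k-j)!\sqrt{tn^\beta/n^{k-j}}\ge\tfrac{3}{2}\gamma_t\cdot t/\binom{n-j}{k-j}$ overcompensates the $\gamma_t$-deficit in $|S_t|$, yielding $|D_t|\ge(1+\eps)t/\binom{n-j}{k-j}$ with no error term at all; combined with $|A_t|=0$ this gives the bound directly.
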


\begin{proof}
We first define the event $\cA$ explicitly.
For any time $t>0$ we define
$$
\gamma_t:= \begin{cases}
\vspace{0.1cm}
\sqrt{\frac{n^{k-j+\beta}}{t}} & \mbox{if } t< T_0, \\
\frac{\delta \eps}{3} & \mbox{otherwise,}
\end{cases}
$$
and let
$$
\cA_t := \left\{|S_t| \ge \left(1-\gamma_t\right)\frac{(1+\eps)t}{\binom{n-j}{k-j}}\right\}.
$$
Now we define
$$
\cA := \bigcap_{\frac{n^{k-j+\beta}}{4(k-j)!} \le t \le T_0} \cA_t.
$$
We now need to show that the two properties of the proposition are satisfied for this choice of $\cA$.
First observe that for $\frac{n^{k-j+\beta}}{4(k-j)!} \le t < T_0$,
Proposition~\ref{proposition:usingchernoff} (applied with $\gamma=\gamma_t$) implies that
\[\Pr(\cA_t)  \ge 1-\exp\left(-\Theta\left(\gamma_t^2 pt\right)\right)
 \ge 1-\exp\left(-\Theta\left(\gamma_t^2 \frac{t}{n^{k-j}}\right)\right)
\ge 1-\exp\left(-\Theta\left(n^\beta\right)\right).\]

On the other hand, for $t= T_0$ again Proposition~\ref{proposition:usingchernoff}
implies that
$$
\Pr(\cA_{T_0})\ge 1-\exp\left(-\Theta\left(\gamma_t^2 pT_0\right)\right)
= 1-\exp\left(-\Theta\left(\delta^2\eps n\right)\right) =1-o(1),
$$
where the convergence holds because $\delta^2\eps n \ge \eps^3 n \to \infty$.
Therefore by applying a union bound,
$$
\Pr(\cA) \ge 1-T_0 \exp\left(-\Theta\left(n^\beta\right)\right) - o(1) = 1-o(1),
$$
as required.

We now aim to prove the second statement, so let us assume that $\cA$ holds,
and 
we make a case distinction according to which of~\ref{stopnewstarts}
and~\ref{stoptime} is applied.

\subsubsection*{Case 1: \ref{stopnewstarts} is applied}

By applying Lemma \ref{lem:forbiddensets} we can bound the number of queries
made from each explored $j$-set at any time $t\le \Tstop$ from below by
\[(1-\eps+\delta\eps/2)\binom{n-j}{k-j} \ge \frac{3n^{k-j}}{4(k-j)!}.\]
In particular, since~\ref{stopnewstarts} is applied,
we must have made at least $n^\beta/2$ new starts, and therefore
at least $n^\beta/2 -1 \ge n^\beta/3$ many $j$-sets are explored.
Thus we have made at least $\frac{n^\beta}{3} \cdot \frac{3n^{k-j}}{4(k-j)!}$
queries, and therefore we may assume that $\Tstop \ge \frac{n^{k-j+\beta}}{4(k-j)!}$.
(Note that this in particular motivates why the definition of $\cA$
did not include any $\cA_t$ for $t< \frac{n^{k-j+\beta}}{4(k-j)!}$.)

Furthermore, since~\ref{stoptime} is \emph{not} applied, we have $\Tstop<T_0$.
Therefore, the fact that $\cA$ holds tells us that for $t = \Tstop$,

\begin{equation}\label{eq:intermediatetpartial}
|D_t| \ge |S_t| + |R_t| \ge \left(1-\gamma_t\right)(1+\eps)\frac{t}{\binom{n-j}{k-j}} + |R_t|.
\end{equation}
Since~\ref{stopnewstarts} is applied at $t = \Tstop$,
we further have
$$
|R_t|\ge 2(k-j)!\sqrt{\frac{t n^\beta}{n^{k-j}}} \ge \frac{3\gamma_t t}{2\binom{n-j}{k-j}}.
$$
Substituting this inequality into~\eqref{eq:intermediatetpartial}, we obtain 
$$
|D_t| \ge \left(1-\gamma_t\right)(1+\eps)\frac{t}{\binom{n-j}{k-j}} + \frac{3\gamma_t t}{2\binom{n-j}{k-j}}
\ge (1+\eps)\frac{t}{\binom{n-j}{k-j}}.
$$

Furthermore, since~\ref{stopnewstarts} is applied at $t = \Tstop$,
a new start must have been made at time $t$.
This implies that the set of active sets at time $A_t$ was empty, i.e. $|A_t| = 0$.
This means that
\begin{equation*}
|E_t| = |D_t| \ge (1+\eps)\frac{t}{\binom{n-j}{k-j}}
\ge \frac{(1-2\delta\eps/5)(1+\eps)t}{\binom{n-j}{k-j}},
\end{equation*}
as claimed.

\subsubsection*{Case 2: \ref{stoptime} is applied}

We will use
the trivial bound $|R_t|\ge 0$,
and therefore $\cA$ tells us that at time $t = T_0 = \Tstop$ we have
\begin{equation*}
|D_t| = |S_t|+|R_t| \ge \left(1-\frac{\delta\eps}{3}\right)(1+\eps)\frac{t}{\binom{n-j}{k-j}}.
\end{equation*}
Furthermore, by~\ref{stoplength},
$$\ell_t= O(\eps n),$$ and therefore
by~\eqref{eqn:activeviapath}
\begin{align*}
|A_{t}| \leq 1 + \binom{k-j}{a} \ell_t & = O(\eps n)  = O\left(\frac{\eps^2 T_0}{n^{k-j}}\right).
\end{align*}
Thus 
the number of explored sets at time $T_0$ satisfies 

\begin{equation*}\label{eq:T0final}
|E_{T_0}| = |D_{T_0}| - |A_{T_0}|
\geq \frac{\left((1-\delta\eps/3)(1+\eps)-O\left(\eps^2 \right)\right)T_0}{\binom{n-j}{k-j}}  
\geq \frac{(1-2\delta\eps/5)(1+\eps)T_0}{\binom{n-j}{k-j}},
\end{equation*}
where in the last step we have used the fact that $\delta \gg \eps$.
\end{proof}

The previous result enables us to prove the following.

\begin{proposition}\label{prop:newstartsbound}\label{SCnewstarts}\label{prop:newstartsboundj1}
	\Whp\ neither~\ref{stoptime} nor~\ref{stopnewstarts} is applied.
\end{proposition}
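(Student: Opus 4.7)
The plan is to combine Proposition~\ref{prop:newstarts:probabilistic} and Lemma~\ref{lem:forbiddensets} via a simple double-counting that forces the inequality $\Tstop \ge (1+\Omega(\delta\eps))\,\Tstop$ whenever either~\ref{stoptime} or~\ref{stopnewstarts} is the active stopping rule, which is impossible.

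I would first condition on the event $\cA$ supplied by Proposition~\ref{prop:newstarts:probabilistic}, which holds with probability $1-o(1)$, and assume for a contradiction that either~\ref{stoptime} or~\ref{stopnewstarts} is applied at $t=\Tstop$. Proposition~\ref{prop:newstarts:probabilistic}(ii) then yields
\[
|E_{\Tstop}|\;\ge\;\frac{(1-2\delta\eps/5)(1+\eps)\,\Tstop}{\binom{n-j}{k-j}},
\]
and, since~\ref{stoplength} has not yet been triggered, Lemma~\ref{lem:forbiddensets} guarantees that at least $(1-\eps+\delta\eps/2)\binom{n-j}{k-j}$ queries were made from each $j$-set in $E_{\Tstop}$.

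The key additional observation is that queries made from two distinct explored $j$-sets use disjoint collections of $k$-sets. Indeed, if a $k$-set $K$ containing two distinct $j$-sets $J_1,J_2$ were queried both from $J_1$ at time $t_1$ and from $J_2$ at time $t_2>t_1$, then at time $t_2$ the $j$-set $J_1$ is either explored---directly forbidden by~\ref{cond:noexplored}---or active, in which case $J_1\setminus J_2\neq\emptyset$ would lie in $V(P_{t_2})$ (because $J_1$ is active) and also in $K\setminus J_2$, which~\ref{cond:vertexrepeat} requires to be disjoint from $V(P_{t_2})$, a contradiction. Summing the per-explored-$j$-set lower bound over $E_{\Tstop}$ therefore gives
\[
\Tstop\;\ge\;|E_{\Tstop}|\cdot (1-\eps+\delta\eps/2)\binom{n-j}{k-j}\;\ge\;(1-2\delta\eps/5)(1+\eps)(1-\eps+\delta\eps/2)\,\Tstop.
\]
Expanding $(1+\eps)(1-\eps+\delta\eps/2)=1+\delta\eps/2-\eps^2+O(\delta\eps^2)$ and multiplying by $1-2\delta\eps/5$ yields $1+\delta\eps/10+o(\delta\eps)$, where $\delta\gg\eps$ is used to absorb the $O(\eps^2)$ and $O(\delta^2\eps^2)$ correction terms into $o(\delta\eps)$. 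This strictly exceeds $1$ for large $n$, producing the required contradiction.

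The main substance has already been carried out by the earlier results, so there is no serious obstacle here. The two points that deserve care are the disjointness of query-sets across distinct explored $j$-sets---which is exactly what conditions~\ref{cond:vertexrepeat} and~\ref{cond:noexplored} of \pathfinder\ were designed to enforce---and the routine numerical check that the product $(1-2\delta\eps/5)(1+\eps)(1-\eps+\delta\eps/2)$ beats $1$ by order $\delta\eps$, which relies crucially on $\delta\gg\eps$ so that the leading $\delta\eps$ gain is not swallowed by the $\eps^2$ error term.
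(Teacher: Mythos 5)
Your proposal is correct and follows essentially the same route as the paper: condition on the event $\cA$ from Proposition~\ref{prop:newstarts:probabilistic}, use Lemma~\ref{lem:forbiddensets} to lower-bound the number of queries per explored $j$-set, and multiply to obtain the impossible inequality $t>t$, with the same numerical margin $\delta\eps/10$ coming from $\delta\gg\eps$. The only difference is that you spell out explicitly why queries from distinct explored $j$-sets are disjoint (via conditions~\ref{cond:vertexrepeat} and~\ref{cond:noexplored}), a point the paper's proof leaves implicit, having justified it earlier when describing the algorithm.
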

\begin{proof}
For any time $t\ge 0$, let us define the event
$$
\cE_t:= \left\{|E_t| \geq \frac{(1-2\delta\eps/5)(1+\eps)t}{\binom{n-j}{k-j}}\right\},
$$
i.e.\ that the bound on $|E_t|$ from Proposition~\ref{prop:newstarts:probabilistic} holds.
We will show that in fact it is not possible that
$\cE_t$ holds for any $t\le \Tstop$.
Therefore, Proposition~\ref{prop:newstarts:probabilistic}
implies that the probability that
one of~\ref{stopnewstarts} and~\ref{stoptime} is applied is at most $1-\Pr(\cA)= o(1)$.
So suppose for
a contradiction that $\cE_t$ holds for some $t\le \Tstop$.

As in the proof of Proposition~\ref{prop:newstarts:probabilistic},
an application of Lemma \ref{lem:forbiddensets} implies that from each explored $j$-set at any time $t\le \Tstop$ 
we made at least
\[(1-\eps+\delta\eps/2)\binom{n-j}{k-j} \ge \frac{3n^{k-j}}{4(k-j)!}\]
queries.
Therefore, by Proposition~\ref{prop:newstarts:probabilistic},
the total number $t$ of queries made satisfies

\[t  \geq |E_{t}| \cdot(1-\eps+\delta \eps/2)\binom{n-j}{k-j}
\geq (1-2\delta\eps/5+\delta\eps/2+O(\eps^2))t
>t,\]
yielding the desired contradiction.
\end{proof}

We next prove that \whp\ \ref{stopdegree} is not applied.
This may be seen as a form of \emph{bounded degree lemma}.
Both the result and the proof are inspired by similar results in~\cite{CooleyKangKoch18,CooleyKangPerson18}.

The intuition behind this stopping condition is that the \emph{average} degree of an $i$-set
should be of order $\frac{tp}{n^i} \sim \frac{t}{n^{k-j+i}}$,
and~\ref{stopdegree} guarantees that, for $t\le \Tstop$, no $i$-set exceeds this by more than
a constant factor.
The $n^\beta$-term can be interpreted as an error term which takes over
when the average $i$-degree (i.e.\ the average degree over all $i$-sets) is too small to guarantee an appropriate
concentration result.

Note, however, that due to the choice of $T_0$, the average $i$-degree
is actually much smaller than $n^\beta$ for any $i\ge 2$ (and possibly
even for $i=1$ if $\eps=\Omega(n^{-\beta})$). Meanwhile, the statement
for $i=0$ is simply a statement about the number of discovered $j$-sets,
which follows from a simple Chernoff bound on the number of edges discovered,
together with~\ref{stopnewstarts} to bound the number of new starts.
Thus the strongest and most interesting case of the statement is when
$i=1$; 
nevertheless, our proof strategy is strong enough to cover all $i$
and would even work for any $t>T_0$, provided~\ref{stopnewstarts}
has not yet been applied.

\begin{lemma}\label{lem:SCdegrees}
	\Whp\ \ref{stopdegree} is not applied.
\end{lemma}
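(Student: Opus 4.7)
The approach is induction on $i$ from $0$ up to $j-1$, exploiting the hierarchy $1 \ll C_{k,j,0} \ll C_{k,j,1} \ll \cdots \ll C_{k,j,j-1}$ to create slack that absorbs the implicit constants appearing at each level. For the base case $i=0$, we have $d_t(\emptyset) = |D_t| = |S_t| + |R_t|$. Proposition~\ref{proposition:usingchernoff} (together with a separate Chernoff estimate via Lemma~\ref{lem:Chernoffwitherror} at times $t$ too small for the proposition to be useful) yields $|S_t| \le O(t/n^{k-j}) + n^\beta/2$ simultaneously for all $t \le T_0$ with probability $1 - o(1)$ after a union bound, since the individual failure probability $\exp(-\Theta(n^\beta))$ beats polynomial factors in $n$. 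Assuming~\ref{stopnewstarts} has not yet been applied, $|R_t| \le 2(k-j)!\sqrt{t n^\beta/n^{k-j}} + n^\beta$, and the AM--GM inequality $\sqrt{t n^\beta/n^{k-j}} \le \tfrac{1}{2}(t/n^{k-j} + n^\beta)$ absorbs the middle term into the target. This gives $|D_t| \le C_{k,j,0}\, t/n^{k-j} + n^\beta$ for $C_{k,j,0}$ sufficiently large.

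For the inductive step, fix $1 \le i \le j-1$, an $i$-set $I$, and a time $t \le \Tstop$. Write $d_t(I) = d_t^R(I) + d_t^S(I)$ for the contributions from new starts (at most $|R_t| \le n^\beta$) and from standard activations. For $d_t^S(I)$, every contributing $j$-set $J \supset I$ arises as some $J_Z = Z \cup C_2 \cup \cdots \cup C_r \cup (K \setminus J')$ in the batch activated when an edge $K$ is discovered from a $j$-set $J'$ with extendable partition $(C_0, \dotsc, C_r)$, and in particular $I \subset K$. Decomposing $I = I' \sqcup I''$ with $I' := I \cap J'$ of size $i'$ and $I'' := I \cap (K \setminus J')$ of size $i-i'$, the inductive hypothesis applied to $I'$ bounds the number of \emph{candidate queries} $(J',X)$ with $I' \subset J'$ and $I'' \subset X$ by
\[
\left(\frac{C_{k,j,i'}\, t}{n^{k-j+i'}} + n^\beta\right) \cdot O\!\left(n^{k-j-(i-i')}\right).
\]
Each such query, conditional on being made, yields an edge independently with probability $p$ (since \pathfinder\ never repeats a query, by condition~\ref{cond:noexplored}), and each successful query adds $\binom{k-j}{a}=O(1)$ many $j$-sets to the activated batch. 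Applying Lemma~\ref{lem:Chernoffwitherror} to the number of successful queries in each split, summing over $i'$, and union-bounding over the $O(n^j)$ choices of $I$ and over the polynomially many times $t$ (all swallowed by the $\exp(-\Theta(n^\beta))$ tail) gives $d_t^S(I) \le O(t/n^{k-j+i}) + O(n^\beta)$. Choosing $C_{k,j,i}$ large enough relative to the implicit constants closes the induction.

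The main obstacle is setting up the stochastic domination for $d_t^S(I)$ rigorously, since the set of candidate queries is itself random, depending on the evolution of the discovered $j$-sets and on the random orderings $\sigma_j, \sigma_k$. The resolution is to work on the high-probability event where all inductive bounds hold simultaneously as deterministic upper bounds for every $t \le \Tstop$, and within this event to condition on the filtration generated by the algorithm's state just before each query; condition~\ref{cond:noexplored} then guarantees that each query outcome is a fresh, independent $\mathrm{Bernoulli}(p)$ trial. A further subtlety is that $d_t(I)$ is monotone non-decreasing in $t$ and changes only at discrete events (when a $j$-set is discovered), so it suffices to verify the bound at these events, keeping the union bound over $t$ manageable.
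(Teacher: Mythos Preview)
Your inductive step for $d_t^S(I)$ has a genuine gap: the case $i' = i$ (i.e.\ $I \subset J'$) cannot be handled by the inductive hypothesis, since that would require exactly the bound $d_t(I) \le C_{k,j,i}\,t/n^{k-j+i} + n^\beta$ you are trying to establish. If instead you restrict the sum to $i' \le i-1$, you are omitting all activations in which the edge $K$ was discovered from a $j$-set $J'$ already containing $I$---what the paper calls a \emph{pivot} at $I$---and these contribute nontrivially. Indeed, writing out the $i'=i$ term naively gives $d_t(I)\cdot O(n^{k-j})$ candidate queries, which after multiplying by $p=\Theta(n^{-(k-j)})$ yields an $O(d_t(I))$ contribution on the right-hand side, so the inequality closes on itself with no gain. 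The paper resolves this by treating pivots separately: once $I$ has entered the current active $j$-set (via a jump with $i'<i$ or via a new start), it can remain there for only boundedly many consecutive edges along the path, because each new edge shifts the extendable partition by one slot and eventually drops all of $I$'s vertices. This allows the total pivot contribution descending from a single entry point to be upper-coupled with a branching process of bounded depth and offspring mean $1+\eps$, whose total size is $O(1)$ times the number of entry points; and the number of entry points (jumps plus new starts at $I$) \emph{is} controlled by the inductive hypothesis.

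There is also a secondary issue: your parenthetical bound $d_t^R(I) \le |R_t| \le n^\beta$ in the inductive step is false---by your own AM--GM computation in the base case, $|R_t|$ can be of order $t/n^{k-j}$, which for $i \ge 1$ is larger than the target $C_{k,j,i}\,t/n^{k-j+i}+n^\beta$ by a factor of $n^i$. The fix (which the paper uses) is that new starts are chosen uniformly among the neutral $j$-sets, so each one contains the fixed $i$-set $I$ with probability $O(n^{-i})$; a Chernoff bound then gives $d_t^R(I) = O(|R_t|\,n^{-i}) + n^{\beta/2}$ whp, which is small enough.
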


\begin{proof}
We will prove that the probability that~\ref{stopdegree} is applied
at a particular time $t\le \Tstop$, i.e.\ before any other stopping condition
has been applied, is at most $\exp\left(-\Theta\left(n^{\beta/2}\right)\right)=o(n^{-k})$,
and then a union bound over all possible $t$ completes the argument.

	We will prove the lemma by induction on $i$. For $i=0$ the statement is
	just that the number of discovered $j$-sets is at most $C_{k,j,0}t / n^{k - j} +n^\beta$,
	which follows from Lemma~\ref{lem:Chernoffwitherror} and~\ref{stopnewstarts}.
	More precisely, using~\eqref{eqn:discoveredviaqueries}
	and applying Lemma~\ref{lem:Chernoffwitherror} with $\alpha=\beta/2$, we have that
	$$
	\Pr\left(\frac{|S_t|}{\binom{k-j}{a}}\ge 2tp+n^{\beta/2}\right)
	\le \exp\left(-\Theta\left(n^{\beta/2}\right)\right).
	$$
	Furthermore, by~\ref{stopnewstarts}, we have
	\begin{align*}
	|R_t| & \le 2(k-j)!\sqrt{\frac{t n^\beta}{n^{k-j}}} + \frac{n^\beta}{2} \\
	& \le \begin{cases}
	\frac{3n^\beta}{4} & \mbox{if } t\le \frac{n^{k-j+\beta}}{64((k-j)!)^2},\\
	16((k-j)!)^2 \frac{t}{n^{k-j}} + \frac{n^\beta}{2} & \mbox{if } t\ge \frac{n^{k-j+\beta}}{64((k-j)!)^2}
	\end{cases}\\
	& \le 16((k-j)!)^2 \frac{t}{n^{k-j}} + \frac{3n^\beta}{4}.
	\end{align*}
	Thus with probability at least $1-\exp\left(-\Theta\left(n^{\beta/2}\right)\right)$
	we have
	\begin{align*}
	|D_t|=|S_t|+|R_t| & \le \binom{k-j}{a}\left(2tp + n^{\beta/2}\right) + 16((k-j)!)^2 \frac{t}{n^{k-j}} + \frac{3n^\beta}{4} \\
	& \le \left(3(k-j)! + 16((k-j)!)^2\right) \cdot \frac{t}{n^{k-j}} + n^\beta \\
	& \le \frac{20((k-j)!)^2 t}{n^{k-j}} + n^\beta,
	\end{align*}
	and since we chose $C_{k,j,0}\gg 1$, and in particular $C_{k,j,0}> 20((k-j)!)^2$,
	this shows that \whp~\ref{stopdegree}
	is not applied because of $I=\emptyset$ (i.e.\ with $i=0$).
	So we will assume that $i \ge 1$ and that~\ref{stopdegree} is not applied for $0,1,\ldots,i-1$.

	Given $1 \le i \le j-1$ and an $i$-set $I$, let us consider the possible
	ways in which some $j$-sets containing $I$ may become active.
	
	\begin{itemize}
		\item A \emph{new start} at $I$ occurs when there are no active $j$-sets
		and we make a new start at a
		$j$-set which happens to contain $I$. In this case $d(I)$ increases by $1$;
		\item A \emph{jump} to $I$ occurs when we query a $k$-set containing $I$ from a $j$-set
		not containing $I$ and discover an edge.
		In this case $d(I)$
		increases by at most $\binom{k-j}{a}$ (the number of new $j$-sets which become active in a batch,
		each of which may or may not contain $I$);
		\item A \emph{pivot} at $I$ occurs when we query a $k$-set from a $j$-set containing $I$
		and discover an edge. In this case
		$d(I)$ increases by at most $\binom{k-j}{a}$.
	\end{itemize}
	
	Each possibility makes a contribution to the degree of $I$ according to
	how many $j$-sets containing $I$ become active as a result of each type
	of event.
	We bound the three contributions separately.
	
	\textbf{New starts:} Whenever we make a new start, we choose the starting $j$-set
	according to some (previously fixed) random ordering $\sigma_j$ (recall
	that $\sigma_j$ was a permutation of the $j$-sets chosen uniformly at random
	during the initialisation of the algorithm).
	By~\ref{stopnewstarts}, at time $t\le \Tstop$ the number of 
	new starts we have made is
	$$
	|R_t|\le 2(k-j)!\sqrt{\frac{tn^{\beta}}{n^{k-j}}}+\frac{n^\beta}{2}.
	$$
	Observe that
	$$
	\sqrt{\frac{tn^\beta}{n^{k-j}}} \le \begin{cases}
	n^\beta & \mbox{if } t\le n^{k-j+\beta},\\
	\frac{t}{n^{k-j}} & \mbox{if } t\ge n^{k-j+\beta},
	\end{cases}
	$$
	which means that the number of new starts satisfies
	$$
	|R_t|
	\le 2(k-j)!tn^{j-k}+3(k-j)!n^\beta =:N^*.
	$$
	Since the new starts are distributed randomly,
	the probability that a $j$-set chosen for a new start at time $t'\le t$ contains
	$I$ is precisely the proportion of neutral $j$-sets at time $t'$ which contain
	$I$. Since~\ref{stopdegree} has not yet been applied, in particular with $i=0$,
	the total number of non-neutral $j$-sets (which cannot be chosen for a new start)
	at time $t'\le t$ is at most
	\begin{align*}
	d_{t'}(\emptyset) \le d_t(\emptyset) \le \frac{C_{k,j,0}t}{n^{k-j}} + n^\beta \le \frac{C_{k,j,0}n}{\eps} + n^\beta 
	\le n^{4/3} = o(n^{j}).
	\end{align*}
	Thus the probability that the $j$-set chosen contains $I$ is at most
	$$
	\frac{\binom{n-i}{j-i}}{\binom{n}{j}-o(n^j)} \le 2j!n^{-i}.
	$$
	Therefore
	the number of new starts containing $I$ is dominated
	by $\Bi(N^*,2j!n^{-i})$, which has expectation
	at most $4k! t n^{j-k-i}+1$ (since $n^{\beta-i}=o(1)$).
	By Lemma~\ref{lem:Chernoffwitherror},
	with probability at least $1-\exp (-\Theta(n^{\beta/2}))$ the number of new starts at $I$ is at most 
	$$
	8k! t n^{j-k-i}+2+n^{\beta/2} \le 8k! t n^{j-k-i}+n^{2\beta/3}.
	$$
	Taking a union bound over all possible $i$-sets $I$, with probability
	at least
	$$
	1-\binom{n}{i} \exp (-\Theta(n^{\beta/2})) = 1-\exp (-\Theta(n^{\beta/2})),
	$$
	every $i$-set is contained in at most
	\begin{equation}\label{eq:newstartsbound}
	8k!n^{j-k-i}t+n^{2\beta/3}
	\end{equation}
	new starts.
	
	\textbf{Jumps:} From each $j$-set $J$ which became active in the search process,
	but which did not contain $I$,
	if we queried a $k$-set containing $I$ and this $k$-set was an edge,
	then the degree of $I$ may increase by up to
	$\binom{k-i}{j-i}$.
	To bound the number of such jumps, we distinguish according to the intersection $Z=J\cap I$,
	and denote $z:=|Z|$.
	Observe that $0\le z \le i-1$, and for each of the $\binom{i}{z}$ many $z$-sets
	$Z \subset I$,
	by the fact that~\ref{stopdegree} has not been previously applied for this set $Z$,
	there are
	at most $d_t(Z)\le \frac{C_{k,j,z}t}{n^{k-j+z}}+n^\beta$
	many $j$-sets in $D_t$ which intersect $I$ in $Z$.
	For each such $j$-set $J$,
	there are at most $\binom{n}{k-j-i+z}\le n^{k-j-i+z}$ many $k$-sets containing both $J$ and $I$, i.e.\ which we might have
	queried from $J$ and which would result in jumps to $I$.
	
	Thus in total, the number of $k$-sets which we may have queried and which might have resulted in a jump to $I$
	is at most
	\begin{align*}
	\sum_{z=0}^{i-1} \binom{i}{z} \left(\frac{C_{k,j,z}t}{n^{k-j+z}}+n^\beta\right) n^{k-j-i+z}
	& = \sum_{z=0}^{i-1} \binom{i}{z} \left(\frac{C_{k,j,z}t}{n^{i}}+n^{k-j-i+z+\beta}\right) \\
	& \le 2^i\left(\max_{0\le z \le i-1}C_{k,j,z} \frac{t}{n^i} + n^{k-j-1+\beta}\right)\\
	& = 2^i\left(C_{k,j,i-1} \frac{t}{n^i} + n^{k-j-1+\beta}\right)=:N,
	\end{align*}
	since we chose $C_{k,j,j-1} \gg C_{k,j,j-2} \gg \ldots \gg C_{k,j,0}$.
	Then the number of edges that we discover which result in jumps to $I$ is dominated by
	$\Bi (N,p)$.
	By Lemma~\ref{lem:Chernoffwitherror}, with probability at least
	$1-\exp(-\Theta(n^{\beta/2}))$ this random variable is at most
	\begin{align*}
	2Np+n^{\beta/2}
	& \le \frac{(k-j)!}{\binom{k-j}{a}}2^{i+2}C_{k,j,i-1}\frac{t}{n^{k-j+i}}
		+ O\left(n^{-1+\beta}\right) + n^{\beta/2} \\
	& \le \frac{(k-j)!}{\binom{k-j}{a}}2^{i+2}C_{k,j,i-1}\frac{t}{n^{k-j+i}} + 2n^{\beta/2},
	\end{align*}
	and so the contribution to the degree of $I$
	made by jumps to $I$ is at most
	\begin{equation}\label{eq:jumpscontribution}
	(k-j)!2^{i+2}C_{k,j,i-1} \frac{t}{n^{k-j+i}}+n^{2\beta/3}.
	\end{equation}

	\textbf{Pivots:}
	Whenever we have a jump to $I$ or a new start at $I$, some $j$-sets containing $I$ become active.
	From these $j$-sets we may query further $k$-sets, potentially resulting in some more $j$-sets
	containing $I$ becoming
	active. However, the number of such $j$-sets containing $I$ that become active due to such a pivot
	is certainly at most $\binom{k-j}{a}$. Thus the number of further $j$-sets that become
	active due to pivots from some $j$-set $J$ is at most $\binom{k-j}{a}\cdot\Bi\left(\binom{n-j}{k-j},p\right)$,
	which has expectation $1+\eps$.
	
	Furthermore, the number of such sequential pivots that we may make
	before leaving $I$ in the $j$-tight path is $\lfloor \frac{k-i}{k-j}\rfloor \le k-i$.
	Thus the number of pivots arising from a single $j$-set containing $I$
	may be upper coupled with a branching process in which vertices
	in the first $(k-i)$ generations produce $\binom{k-j}{a}\cdot\Bi\left(\binom{n-j}{k-j},p\right)$ children,
	and thereafter no more children are produced.
	
	We bound the total size of all such branching processes together.
	Suppose the contribution to the degree of $I$ made by jumps and new starts
	is $x$. Then we have $x$ vertices in total in the first generation,
	and by the arguments above, with probability $1-\exp(-\Omega(n^{\beta/2}))$
	we have, by~\eqref{eq:newstartsbound} and~\eqref{eq:jumpscontribution}, that

	\[x \le \left(8k! + (k-j)!2^{i+2}C_{k,j,i-1}\right) \frac{t}{n^{k-j+i}}+2n^{2\beta/3}
	 \le 2^{i+3}k!C_{k,j,i-1} \frac{t}{n^{k-j+i}}+2n^{2\beta/3}.\]

	For convenience, we will assume (for an upper bound) that 
	in fact $x\ge n^\beta$.
	The number of children in the second generation is dominated
	by $\binom{k-j}{a}\cdot\Bi \left(x\binom{n-j}{k-j}, p\right)$, which has expectation
	$(1+\eps)x$, and so by Lemma~\ref{lem:Chernoffwitherror},
	with probability $1-\exp(-\Omega(n^{\beta/2}))$,
	the number of children is at most $2(1+\eps)x+n^{\beta/2} \le 4x$.
	Similarly, with probability $1-\exp(-\Omega(n^{\beta/2}))$,
	the number of vertices in the third generation is at most $16x$,
	and inductively the number of vertices in the $m$-th generation
	is at most $2^{2(m-1)}x$ for $1\le m \le k-i+1$.
	Thus in total, with probability at least $1-\exp(-\Theta(n^{\beta/2}))$,
	the number of vertices in total in all these branching processes
	is at most
	$$
	\sum_{m=1}^{k-i} 2^{2(m-1)}x \le 2^{2k}x \le 2^{3k+3}k!C_{k,j,i-1} \frac{t}{n^{k-j+i}}+n^{\beta}.
	$$
	
	However, the vertices in the branching process exactly
	represent (an upper coupling on) the $j$-sets which can be discovered
	due to jumps to or new starts at $I$ and
	the pivots arising from them, which are
	all of the $j$-sets containing $I$ which we discover in the \pathfinder\ algorithm.
	Thus with probability at least $1-\exp(-\Theta(n^{\beta/2}))$,
	the number of $j$-sets containing $I$ which became active is
	at most
	$$
	2^{3k+3}k!C_{k,j,i-1} \frac{t}{n^{k-j+i}}+n^{\beta} \le C_{k,j,i}\frac{t}{n^{k-j+i}}+n^{\beta},
	$$
	since we chose $C_{k,j,i}\gg C_{k,j,i-1}$.
	Taking a union bound over
	all $\binom{n}{i}$ many $i$-sets $I$, and observing that
	$\binom{n}{i}\exp(-\Theta(n^{\beta/2})) = o(1)$,
	the result follows.
\end{proof}

\begin{proof}[Proof of Lemma~\ref{lem:mainlemmastoppingtime}]
The statement of Lemma~\ref{lem:mainlemmastoppingtime}
follows directly from Proposition~\ref{prop:newstartsbound}
and Lemma~\ref{lem:SCdegrees}.
\end{proof}

\section{Longest paths: proof of Theorem~\ref{thm:mainresult}}\label{sec:mainproof}

The various statements contained in Theorem~\ref{thm:mainresult} have now all been proved,
with the exception of the upper bounds, whose standard proofs we delay
to the appendices.
\begin{itemize}
\item The upper bounds of statements~\ref{item:thm-subcritical},~\ref{item:thm-supercritical}
and~\ref{item:thm-loose} of Theorem~\ref{thm:mainresult} can be proved using a basic first moment method.
The details can be found in Appendix~\ref{app:firstmoment}.
\item The lower bound of statement~\ref{item:thm-subcritical}
follows directly from Lemmas~\ref{lem:lowerbound-subcritical:case1} and~\ref{lem:lowerbound-subcritical:case2}.
\item The lower bound of statement~\ref{item:thm-supercritical} is implied by Lemma~\ref{lem:mainlemmastoppingtime}, which is identical except that it omits the assumption that $\delta \gg \frac{\ln n}{\eps^2 n}$.
\item The lower bound of statement~\ref{item:thm-loose} is precisely Lemma~\ref{lem:loosesupercriticallower}.
\end{itemize}

\section{Concluding remarks}\label{sec:concluding}

Theorem~\ref{thm:mainresult} provides various bounds on the length $L$ of
the longest $j$-tight path, but these bounds may not be best possible.
Let us examine each of the three cases in turn.

\subsection{Subcritical case}

Here we proved the bounds
$$
\frac{j \ln n- \omega + 3 \ln \eps}{- \ln (1 - \eps)} \leq L \leq \frac{j \ln n+ \omega}{-\ln(1-\eps)}.
$$
A more careful version of the first moment calculation
implies that
if $\ell=\frac{j\ln n +c}{-\ln(1-\eps)}$ for some
constant $c\in \mathbb{R}$, then the expected number of paths of length $\ell$ is asymptotically
$d \cdot e^{c}$, where $d= \frac{(a!b!)^\ell}{z_\ell}= \frac{b!\left(a!b!\right)^s}{2((k-j)!)^{2s}}$.
This suggests heuristically that in this range, the probability that $X_\ell=0$, i.e.\ that there
are no paths of length $\ell$, is a constant bounded away from both $0$ and $1$,
and that in fact the bounds on $L$ are best possible up to the $3\ln \eps$ term in the lower bound.
This term is negligible (and can be incorporated into $\omega$) if $\eps$ is constant,
but as $\eps$ decreases, it becomes more significant. The term arises because as $\eps$ decreases,
the paths become longer, meaning that there are many more pairs of possible paths whose
existences in $H^k(n,p)$ are heavily dependent on one another, and the second moment method
breaks down. Thus to remove the $3\ln \eps$ term in the lower bound requires some new ideas.

\subsection{Supercritical case for $j\ge 2$}

In this case, we had the bounds
$$
(1 - \delta)\frac{\eps n}{(k-j)^2} \leq L \leq (1 + \delta)\frac{2 \eps n}{(k-j)^2}.
$$
Since in particular we may assume that $\delta\ll 1$, the upper bound (provided by the first moment method) and
the lower bound (provided by the analysis of the \pathfinder\ algorithm)
differ by approximately a factor of $2$.

One possible explanation for this discrepancy comes from the fact that we do
not query a $k$-set if it contains some explored $j$-set. As previously explained,
this condition is not necessary to guarantee the correct running of the algorithm,
but it is fundamentally necessary for our \emph{analysis} of the algorithm,
since it ensures that no $k$-set is queried twice and therefore each query is independent.

Removing this condition would allow us to try out many different paths with the
same end (i.e.\ different ways of reaching the same destination), which could
potentially lead to a longer final path since different sets of vertices
are used in the current path and are therefore forbidden for the continuation.

It is not hard to prove that the length $\ell$ of the current path in the modified
algorithm would very quickly reach almost $\frac{\eps n}{(k-j)^2}$
(i.e.\ our lower bound). For each possible way of reaching this, it is extremely
unlikely that the path can be extended significantly, and in particular
to length $\frac{2\eps n}{(k-j)^2}$. However, since there will be
very many of these paths, it is plausible that at least one of them
may go on to reach a larger size, and therefore our lower bound may not be best possible.

On the other hand, it could be that our upper bound is not best possible,
i.e.\ that the first moment heuristic does not give the correct threshold
path length. This could be because if there is one very long path, there are likely
to be many more (which can be obtained by minor modifications),
and so we may not have concentration around the expectation.

Therefore further study is required to determine the asymptotic value of $L$ more precisely.

\subsection{Supercritical case for $j=1$}

For loose paths, we proved the bounds
 \[
 (1 - \delta)\frac{\eps^2 n}{4(k-1)^2} \leq L \leq (1 + \delta)\frac{2 \eps n}{(k-1)^2},
 \]
which differ by a factor of $\Theta(\eps)$. In view of the supercritical case for $j\ge 2$, when
the longest path is of length $\Theta(\eps n)$ one might naively expect this to be the
case for $j=1$ as well, and that the lower bound is incorrect simply because
the proof method is too weak for $j=1$.

However, this is not the case for graphs, i.e.\ when $k=2$ and $j=1$, when the longest path
is indeed of length $\Theta(\eps^2 n)$.
The analogous result for general $k$ and $j=1$ was recently achieved by
Cooley, Kang and Zalla~\cite{CooleyKangZalla21}, who proved an upper bound of
approximately $\frac{2\eps^2 n}{(k-1)^2}$ by bounding the length of the longest
loose \emph{cycle} (via consideration of an appropriate $2$-core-like structure) and using
a sprinkling argument.
Nevertheless, this leaves a multiplicative factor of $8$ between the upper and lower bounds, which it would 
be interesting to close.

\subsection{Critical window}

One might also ask what happens when $\eps$ is smaller than allowed here,
i.e.\ when $\eps^3 n \nrightarrow \infty$.
In the case $j=1$, the lower bounds in the subcritical and supercritical case,
of orders $\frac{\ln(\eps^3 n)}{\eps}$ and $\eps^2 n$ respectively,
would both be
$\Theta(n^{1/3})$ when $\eps^3 n=\Theta(1)$, which suggests that this may indeed be the
correct critical window when $j=1$. However, for $j\ge 2$,
the bounds differ by approximately a factor of $n^{1/3}$ when $\eps^3 n = \Theta(1)$.
It would therefore be interesting to examine whether the statement of Theorem~\ref{thm:mainresult}
remains true for $j\ge 2$ even for smaller $\eps$.

\section*{Acknowledgement}
The collaboration leading to this paper was made possible partially by the support of
the Heilbronn Institute for Mathematical Research and EPSRC grant EP/P032125/1 during the workshop ``Structure and randomness in hypergraphs'', 17-21 December
2018, LSE London.

\bibliographystyle{plain}
\bibliography{References}

\newpage

\appendix

\section{Proofs of auxiliary results}\label{app:auxiliaryproofs}

In this appendix we will prove various auxiliary results that were stated
without proof in the paper. Note that the proofs of the auxiliary results
from Section~\ref{sec:secondmoment} appear separately
in Appendix~\ref{app:secondmomentcase1}, since they are thematically linked.

\subsection{Automorphisms}\label{app:partsizes}

\begin{proof}[Proof of Claim~\ref{claim:partsizes}]
We certainly have
$$
|F_i|=|e_i\setminus e_{i+1}| = |e_i|-|e_i\cap e_{i+1}| = k-j,
$$
and similarly $|G_i|=k-j$.
Furthermore,
$$
|A_i|=|e_i \cap e_{i+s}| = k-s(k-j) = k-\left(\left\lceil \frac{k}{k-j}\right\rceil-1\right) (k-j),
$$
so we have $1\le |A_i|\le k-j$ and $|A_i|\equiv k \mod k-j$,
which recall from~\eqref{eq:def:a} was precisely the definition of $a$, so $|A_i|=a$.
Finally, observe that $A_i \cup B_i = e_{i+s}\setminus e_{i+s-1}$,
and so
$$|B_i|= k-j - |A_i| = k-j-a \stackrel{\eqref{eq:def:b}}{=} b,$$
as required.
\end{proof}

\subsection{Chernoff bound}\label{app:chernoff}
\begin{proof}[Proof of Lemma~\ref{lem:Chernoffwitherror}]
We distinguish two cases. 
	
	\subsubsection*{Case 1: $Np>n^{\alpha}$} By applying~\eqref{eqn:chernoffstd} 
	with $\xi=Np$, we obtain
	\[
	\mathbb{P}(X\geq 2Np+n^\alpha) \le \mathbb{P}(X\geq 2Np) \stackrel{\eqref{eqn:chernoffstd}}{\leq}\exp\left(-\frac{(Np)^2}{\frac{8}{3}Np}\right)\leq\exp(-\Theta(n^{\alpha})),
	\]
	as required.
	
	\subsubsection*{Case 2: $Np\leq n^{\alpha}$}
	By applying~\eqref{eqn:chernoffstd} with $\xi=n^{\alpha}$, we obtain
	\begin{align*}
	\mathbb{P}(X\geq 2Np+n^{\alpha})&\leq \mathbb{P}(X\geq Np+n^{\alpha})\\
	& \stackrel{\eqref{eqn:chernoffstd}}{\leq} \exp\left(-\frac{n^{2\alpha}}{2(n^{\alpha}+n^{\alpha}/3)}\right)\\
	&=\exp(-\Theta(n^{\alpha})),
	\end{align*}
	which proves the assertion in this case. 
\end{proof}

\subsection{First moment method} \label{app:firstmoment}

In this section we prove the upper bounds in
all three statements of Theorem~\ref{thm:mainresult}.
For convenience, we restate these bounds in the following lemma.

\begin{lemma}\label{lem:upperbound}
	Let $k,j\in\NN$ satisfy $1 \leq j \leq k-1$. Let $a\in\NN$ be the unique integer satisfying
	$1 \leq a \leq k-j$ and $a \equiv k \bmod (k-j)$.
	Let $\eps = \eps(n) \ll 1$ satisfy $\eps^3 n \xrightarrow{n\to\infty} \infty$
	and let
	$$
	p_0 = p_0(n;k,j) := \frac{1}{\binom{k-j}{a} \binom{n-j}{k-j}}.
	$$
	Let $L$ be the length of the longest $j$-tight path in $H^k(n,p)$.

	\begin{enumerate}[label=\normalfont{(\roman*)}]
		\item \label{item:lem-upper-subcritical}
		If $p = \frac{1-\eps}{\binom{k-j}{a} \binom{n-j}{k-j}}$, then \whp\
		\[
		L \leq \frac{j \ln n+ \omega}{-\ln(1-\eps)} ,
		\]
		for any $\omega = \omega(n) \xrightarrow{n\to\infty} \infty$.
		\item \label{item:lem-upper-supercritical}
		If $p = \frac{1+\eps}{\binom{k-j}{a} \binom{n-j}{k-j}}$, then
		for any $\delta$ satisfying $\delta \gg \max\{\eps,\frac{\ln n}{\eps^2 n}\}$,
		\whp
		\[
		L \leq (1 + \delta)\frac{2 \eps n}{(k-j)^2}.
		\]
	\end{enumerate}
\end{lemma}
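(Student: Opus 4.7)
The plan is to apply the first moment method to $\hat{X}_\ell$. Since a $j$-tight path of length $\ell$ lies in $H^k(n,p)$ if and only if the corresponding equivalence class $\hat P \in \hat{\cP}_\ell$ does, it suffices to find $\ell$ slightly exceeding the claimed bound for which $\EE(\hat X_\ell) = o(1)$; then Markov's inequality and monotonicity in $\ell$ give the upper bound on $L$. By Corollary~\ref{cor:expectationupperbound}, using the identity $\binom{k-j}{a} a! b! = (k-j)!$, and writing $p = c/(\binom{k-j}{a}\binom{n-j}{k-j})$ with $c \in \{1-\eps, 1+\eps\}$, we have $p/(a!b!) = c/(n-j)_{k-j}$. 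Factoring $(n)_v = (n)_j \prod_{i=0}^{\ell-1}(n-j-i(k-j))_{k-j}$ (vertices chosen edge by edge), we obtain
\[
\EE(\hat X_\ell) = \Theta(1)\, (n)_j\, c^\ell \prod_{i=0}^{\ell-1} \frac{(n-j-i(k-j))_{k-j}}{(n-j)_{k-j}}.
\]

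For the subcritical bound, I would simply use the trivial estimate that the product is at most $1$. With $\ell = \frac{j\ln n + \omega}{-\ln(1-\eps)}$, this gives $\EE(\hat X_\ell) \le \Theta(n^j)(1-\eps)^\ell = \Theta(e^{-\omega}) = o(1)$, which is the desired bound.

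For the supercritical bound, we can no longer discard the product, because $(1+\eps)^\ell$ grows. Instead, for $\ell = O(\eps n)$ and $i = O(\eps n)$, a termwise expansion gives $\frac{(n-j-i(k-j))_{k-j}}{(n-j)_{k-j}} = 1 - \frac{i(k-j)^2}{n} + O\!\left(\frac{i^2}{n^2}\right) + O\!\left(\frac{1}{n}\right)$. Summing over $i$, the product equals $\exp\bigl(-\tfrac{(k-j)^2\ell^2}{2n}(1+o(1))\bigr)$. Taking logs and using $\ln(1+\eps) = \eps - O(\eps^2)$ yields
\[
\ln \EE(\hat X_\ell) = j\ln n + \ell\eps - \frac{(k-j)^2\ell^2}{2n} + O(\ell \eps^2) + O(\ell^3/n^2) + O(1).
\]
Setting $\ell = (1+\delta)\frac{2\eps n}{(k-j)^2}$, the first-order term $\ell\eps$ and the quadratic saving $\frac{(k-j)^2\ell^2}{2n}$ both evaluate to $(1+\delta)\cdot \frac{2\eps^2 n}{(k-j)^2}$ and $(1+\delta)^2\cdot \frac{2\eps^2 n}{(k-j)^2}$ respectively, so their difference is at most $-\delta \cdot \frac{2\eps^2 n}{(k-j)^2}$.

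The main (indeed, the only) delicate step is checking that this saving dominates all the error terms and the $j\ln n$ contribution. The error terms $\ell\eps^2$ and $\ell^3/n^2$ are both of order $\eps^3 n$, which is $o(\delta \eps^2 n)$ since $\delta \gg \eps$; and $j \ln n = o(\delta \eps^2 n)$ is precisely the content of the assumption $\delta \gg \frac{\ln n}{\eps^2 n}$. So $\ln \EE(\hat X_\ell) \to -\infty$, giving $\EE(\hat X_\ell) = o(1)$ and hence the claimed upper bound on $L$ by Markov's inequality. The only real subtlety is that the two assumed lower bounds on $\delta$ correspond exactly to the two error sources that must be absorbed by the quadratic saving, which is why both are needed.
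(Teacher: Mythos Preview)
Your proof is correct and follows essentially the same first-moment strategy as the paper: bound $\EE(\hat X_\ell)$ via Corollary~\ref{cor:expectationupperbound} and apply Markov's inequality. The only difference is bookkeeping---you factor $(n)_v$ as $(n)_j\prod_i (n-j-i(k-j))_{k-j}$ and compare termwise with $(n-j)_{k-j}^\ell$, whereas the paper bounds $(n)_v$ directly (trivially in~\ref{item:lem-upper-subcritical}, via Stirling's formula in~\ref{item:lem-upper-supercritical}); both routes yield the same leading term $-\tfrac{(k-j)^2\ell^2}{2n}$ and the same error analysis.
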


Note that
the only difference between this statement and the upper bounds in Theorem~\ref{thm:mainresult}
is that in Theorem~\ref{thm:mainresult}~\ref{item:thm-loose}
we assume $\delta^2 \eps^3 n \to \infty$ in place of $\delta \gg \frac{\ln n}{\eps^2 n}$,
but it is easy to see that the former condition implies the latter.

\begin{proof}

Since
\[\prob(L\geq\ell) = \prob(\hat{X}_{\ell}\geq1) \leq \expec(\hat{X}_{\ell})\]
by Markov's inequality, it suffices to show that $\expec(\hat{X}_{\ell}) \xrightarrow{n\to \infty} 0$
for the relevant values of $\ell$ and $p$.

We first prove the subcritical case (i.e.~\ref{item:lem-upper-subcritical}),
so we set $p = \frac{1-\eps}{\binom{k-j}{a} \binom{n-j}{k-j}}$
and $\ell = \frac{j \ln n+ \omega}{-\ln(1-\eps)}$.
It is convenient to assume that $\omega=o(\ln n)$, which is permissible
since the statement becomes stronger for smaller~$\omega$. With this assumption we have
$\ell = \Theta \left(\frac{\ln n}{\eps}\right)= o(n)$.
Then by Corollary~\ref{cor:expectationupperbound},
\begin{align*}
\expec(\hat{X}_{\ell})
& = \Theta(1)\frac{(n)_{v}\left(1-\eps\right)^{\ell}}{\left(a!b!\binom{k-j}{a}\binom{n-j}{k-j}\right)^{\ell}}
 \le \Theta(1)\frac{n^v(1-\eps)^{\ell}}{(n-k)^{\ell(k-j)}}\\
& \le \Theta(1) \left(1+\frac{k}{n-k}\right)^{\ell(k-j)} n^j(1-\eps)^{\ell}\\
& = \Theta(1) \left(1+ O\left(\frac{\ell}{n}\right)\right) \exp(j\ln n + \ell \ln(1-\eps))\\
& = \Theta(1)(1+o(1)) \exp(-\omega) \to 0,
\end{align*}
which completes the proof of~\ref{item:lem-upper-subcritical}.

It remains to prove~\ref{item:lem-upper-supercritical}, for which
we set $p = \frac{1+\eps}{\binom{k-j}{a} \binom{n-j}{k-j}}$
and $\ell = (1 + \delta)\frac{2 \eps n}{(k-j)^2}$.
Observe that $v=(k-j)\ell+j= \Theta(\eps n)\leq\frac{n}{2}$.
By applying Stirling's formula
we obtain
\begin{align*}
(n)_{v}&=\frac{n!}{(n-v)!}\\
&=(1+o(1))\sqrt{\frac{n}{n-v}}\frac{n^v}{e^v}\left(1+\frac{v}{n-v}\right)^{n-v} \\\
&=O\left(\frac{n^v}{e^v}\exp\left((n-v)\left(\frac{v}{n-v}-\frac{v^2}{2(n-v)^2} + O\left(\frac{v^3}{(n-v)^3}\right)\right)\right)\right)\\
&=O\left(n^v\exp\left(\frac{-v^2}{2(n-v)}+O\left(\frac{v^3}{n^2}\right)\right)\right) \\
&=O\left(n^v\exp\left(-\frac{\ell^2(k-j)^2+O(\ell)}{2n(1+O(\eps))}+O(\eps^3n)\right)\right)\\
&=O\left(n^v\exp\left(-\frac{\ell^2(k-j)^2}{2n}+O(\eps^3n)\right)\right),
\end{align*}
where in the last line we have used the fact that $\ell/n = O(\eps) = O(\eps^3 n)$.
Therefore by Corollary~\ref{cor:expectationupperbound}, we have
\begin{align*}
\mathbb{E}(\hat{X_{\ell}}) &= \frac{O\left(n^v\exp\left(-\frac{\ell^2(k-j)^2}{2n}+O(\eps^3n)\right)\right)}{(a!b!)^{\ell}}\left(\frac{1+\eps}{\binom{n-j}{k-j}\binom{k-j}{a}}\right)^{\ell} \\
&=  O\left(n^j\exp\left(O(\eps^3 n)\right)  \left(\frac{n^{k-j} \exp\left(\frac{-\ell(k-j)^2}{2n}\right)(1+\eps)}{\left(1+O\left(\frac{1}{n}\right)\right)n^{k-j}}\right)^{\ell} \; \right)  \\
&= O\left(n^j\exp\left(O(\eps^3 n)\right)\left(1+O\left(\frac{\ell}{n}\right)\right)  \bigg(\exp\Big(-(1+\delta)\eps\Big)(1+\eps)\bigg)^{\ell} \; \right).
\end{align*}
Now recall that $1+O(\ell/n) = 1+O(\eps)=O(1)$,
and furthermore
\begin{align*}
\exp(-(1+\delta)\eps)(1+\eps) & = \exp\left(-(1+\delta)\eps + \eps +O\left(\eps^2\right)\right)\\
& = \exp\left(-\delta\eps + O\left(\eps^2\right)\right)
 \le \exp\left(\frac{-\delta\eps}{2}\right),
\end{align*}
since $\delta \gg \eps$.
Therefore
\begin{align*}
\mathbb{E}(\hat{X_{\ell}})
&=O\left(n^j\exp\left(O\left(\eps^3 n\right)-\ell\delta\eps/2\right)\right)\\
&= O\left(\exp\left(-\Theta\left(\eps^2\delta n\right) +j\ln n\right)\right)\to 0,
\end{align*}
by the fact that $\delta \gg \frac{\ln n}{\eps^2 n}$.
This completes case~\ref{item:lem-upper-supercritical}. 
\end{proof}

\subsection{Algorithm properties}\label{app:algprops}

\begin{proof}[Proof of Proposition~\ref{proposition:usingchernoff}]
	Using~\eqref{eqn:discoveredviaqueries}, the stated inequality is equivalent to
	\[ (1-\gamma) pt \le \sum_{i=1}^t X_i \le (1 + \gamma) p t. \]
	By the Chernoff bounds of Lemma~\ref{lem:chernoffbounds},
	the probability that one of these inequalities fails is at most
	\begin{align*}
	\exp \left( - \frac{(\gamma pt)^2}{2pt} \right) + \exp \left( - \frac{(\gamma pt)^2}{2pt +\gamma pt} \right)
	= \exp\left(-\Theta(\gamma^2 pt)\right),
	\end{align*}
as required.
\end{proof}

\begin{proof}[Proof of Proposition~\ref{prop:loosechernoff}]
	Since $|A_t \cup E_t| \ge |S_t|$ and $(k-1)pt = (1+\eps)t/\binom{n-1}{k-1}$, we can apply Proposition~\ref{proposition:usingchernoff}:
	it is enough to find $\gamma$ such that $\gamma = o(\delta \eps)$ and $\gamma^2 p t \rightarrow \infty$.
	Recall that $\delta^2\eps^3 n\to \infty$.
	Let $\omega = \delta^2 \eps^3 n$.
	Then $\gamma = \delta \eps / \omega^{1/3}$ clearly satisfies $\gamma = o(\delta \eps)$.
	On the other hand, by the choice of $t = T_0$, we have $pt = \Theta(\eps n)$.
	Thus $\gamma^2 p t = \Theta(\omega^{1/3}) \rightarrow \infty$, as required.
\end{proof}

\section{Second moment method: Case~1}\label{app:secondmomentcase1}

In this appendix we prove the auxiliary results required
for the proof of Lemma~\ref{lem:lowerbound-subcritical:case1},
i.e.\ the second moment method for the case when $j\le k-2$ or $j=k-1=1$.

\begin{proof}[Proof of Claim~\ref{claim:expectationfirsttermsplit}]

Observe that 
\begin{align*}
\expec(X_{\ell}^2) &= \sum_{(A,B)\in\cP_{\ell}^2} \prob(A,B\subset H^k(n,p)) \\
&= \sum_{q,r,\mathbf{c}} |\cP_{\ell}^2(q,r,\mathbf{c})|p^{2\ell-q}. 
\end{align*}
Furthermore, observe that in the case $q=0$, we must have $r=0$ and $\mathbf{c}=()$
an empty sequence. In this case, we have
$$
|\cP_\ell^2(0,0,())|\le (n)_v^2,
$$
while for $q\ge 1$ clearly $\cP_{\ell}^2(q,r,\mathbf{c})$ is empty unless also $r\ge 1$, and the result follows.
\end{proof}

\begin{proof}[Proof of Proposition~\ref{prop:pl2qrcbound}]

To estimate $|\cP_\ell^2(q,r,\mathbf{c})|$, we will
regard $A$ and $B$ as $j$-tight paths of length $\ell$ which must be
\emph{embedded} into $K_n^{(k)}$ subject to certain restrictions
(so that the parameters $q,r,\mathbf{c}$ are correct),
and estimate the number of ways of performing this embedding appropriately.
We will denote the edges of $A$ by
$(e_1,\ldots,e_{\ell})$ and the edges of $B$ by $(f_1,\ldots,f_{\ell})$, each in the natural order.

First we embed the path $A$; there are $(n)_{v}$ ways of choosing its vertices in order.
Then we embed the path $B$ subject to certain restrictions, since we must obtain
the parameters $q,r,\mathbf{c}$.
We first choose which of the edges $f_i$ on $B$ will lie in $Q(A,B)$---recall
that the $i$-th interval must be of length $c_i$, and therefore must have the form
$(f_{t_i},\ldots,f_{t_i+c_i-1})$, for some $1\le t_i \le \ell-c_i+1$.
Thus the $i$-th interval is determined by the choice of its
first edge $f_{t_i}$. Having already chosen intervals of length
$c_1,\ldots,c_{i-1}$, there are only $\ell-c_1-c_2-\dotsb -c_{i-1}$
edges of $B$ left, of which certainly the last $c_{i}-1$ cannot be
chosen for $f_{t_i}$, since then either the interval would intersect
with another previously chosen interval, or it would extend beyond the
end of $B$. Thus there are at most
$\ell-c_1-\dotsb-c_i +1$ possible choices for $f_{t_i}$.
Subsequently, we choose which edges of $A$ to embed this interval onto.
The corresponding interval in $A$ must have the form either
$$
(e_{s_i},\ldots,e_{s_i+c_i-1})
$$
or
$$
(e_{s_i},\ldots,e_{s_i-c_i+1}),
$$
depending on whether the orientation is with
or against the direction on $A$. There are $2$ choices for the orientation,
and subsequently (arguing as for $B$) at most $\ell-c_1-\dotsb-c_i+1$ choices for $e_{s_i}$.

Thus the number of ways of choosing where to embed the edges
of $Q(A,B)$ is at most
\begin{equation}\label{eqn:embededgesinA}
    \prod_{i=1}^r (\ell-c_1-\dotsb -c_i +1)2(\ell-c_1- \dotsb -c_i+1) \le 2^r(\ell-q+1)^2\ell^{2(r-1)},
\end{equation}
where we have used the fact that $c_1+ \dotsb + c_r=q$.
Observe here that we may well have overcounted: for an interval of length
$1$, the factor of $2$ is superfluous, since orientation makes no difference;
furthermore, if $r>1$, then having embedded the first interval is often
more restrictive with respect to where the second may be embedded than
we accounted for. However, this expression is certainly
an upper bound.

Note also that for $i\le r-1$ we use the crude bound $\ell-c_1-\dotsb -c_i+1\le \ell$,
whereas we are more careful about $c_r$. The reason is that in the case
$r=1$ we will have to bound terms rather precisely, whereas for
$r\ge 2$ we will have plenty of room to spare in the calculations.

We have now fixed how the edges of intervals in $B$ are embedded onto intervals in $A$,
but we also need to account for different ways of ordering the vertices in these intervals.
Since the $i$-th interval forms a $j$-tight path of length $c_i$,
there are $z_{c_i}$ possible ways of re-ordering the vertices of $B$ along it,
but still embedding into $A$ in a way consistent with the edge-assignment.
This is true regardless of where the interval lies on $A$ or $B$,
even if it includes some of the head or tail of $A$ or $B$.

One difficulty is that two different intervals may share
vertices, and therefore not every re-ordering is admissible.
However, we may certainly use $z_{c_i}$ as an upper bound
for each $i$. Thus by~\eqref{eq:isomorphisms},
the number of ways of choosing where to embed the
vertices of $B$ within edges of $Q(A,B)$ is at most
\begin{equation}\label{eqn:vertexorderingsinA}
    \prod_{i=1}^r z_{c_i}=\prod_{i=1}^r\Theta\left((a!b!)^{c_i}\right)=(a!b!)^q\Theta(1)^r.
\end{equation}

We now need to bound the number of ways of embedding the remaining vertices of $B$,
for which we need a \emph{lower} bound on the number of vertices in edges of $Q(A,B)$,
i.e.\ vertices of $B$ which have already been embedded into $A$.
Let us first consider a simple \emph{upper} bound: the $i$-th interval
contains $(k-j)c_i+j$ vertices, and so
we have already embedded at most 
\begin{equation}\label{eqn:simpleupperboundvertexchoicesB}
\sum_{i=1}^r((k-j)c_i+j)=(k-j)q+rj
\end{equation}
vertices, with equality if and only if no two intervals share a vertex.
We find a \emph{lower} bound by considering when the intervals share as many vertices as possible.

Let us first consider the intervals in their natural order along $B$.
Then the number of vertices lying in two consecutive intervals
is at most the size of the intersection of two \emph{non-consecutive}
edges $|e_i\cap e_{i+2}| = \max\{k-2(k-j),0\}$.
Therefore the total number of vertices lying in more than one interval is at most
$$
(r-1)\cdot\text{max}\{k-2(k-j),0\} = (r-1)(j- \min\{j,k-j\}).
$$
Thus using~\eqref{eqn:simpleupperboundvertexchoicesB}, 
the number of vertices already embedded is at least
\[
T(r)= 
(k-j)q+j+(r-1)\min\{j,k-j\}.
\]
Therefore we have at most $v-T(r)$ vertices of $B$ still left to embed, for which there are at most
\begin{equation}\label{eq:embedrest}
(n)_{v-T(r)} \leq \frac{(n)_{v}}{(n-v)^{T(r)}}
\end{equation}
choices.

Combining~\eqref{eqn:embededgesinA},~\eqref{eqn:vertexorderingsinA} and~\eqref{eq:embedrest}
with the fact that there were $(n)_{v}$ ways of embedding $A$, for $r\ge 1$
we obtain
\begin{align*}
|\cP_\ell^2(q,r,\mathbf{c})| & \le (n)_{v}2^r(\ell-q+1)^2 \ell^{2(r-1)} (a!b!)^q \Theta(1)^r \frac{(n)_{v}}{(n-v)^{T(r)}} \\
& = (n)_{v}^2 \frac{(\ell-q+1)^2 \ell^{2(r-1)}(a!b!)^q \Theta(1)^r}{(n-v)^{T(r)}}
\end{align*}
as claimed.
\end{proof}

\begin{proof}[Proof of Proposition~\ref{prop:doublesum}]
It will turn out that for each $q$, the $r=1$ term is the most significant, so we will treat this case separately. 
We define the following functions for each positive integer $q$ and $r\in [q]$.
\[ y_q(r) := \sum_{\substack{c_1+\dotsb+c_r=q \\ c_1\geq\dotsb\geq c_r\geq 1}}1 \]and
\[x_q(r):=
\frac{\ell^{2(r-1)}C^r}{(n-v)^{T(r)}}y_q(r)
\]
Combining these with~\eqref{eqn:squareexp}, we obtain
\begin{align}
\sum_{r=1}^{q}\ \sum_{\substack{c_1+\dotsb+c_r=q \\ c_1\geq\dotsb\geq c_r\geq 1}}\frac{(\ell-q+1)^2\ell^{2(r-1)} (a!b!)^qC^r}{p^q (n-v)^{T(r)}}
= 
\frac{(a!b!)^q}{p^q}(\ell-q+1)^2\sum_{r=1}^{q}x_q(r).
 \label{eqn:maintermforexpectation}
\end{align}

Observe that
\begin{equation}\label{eq:yratio}
y_q(r+1)\leq\sum_{c_{r+1}=1}^{q}y_{q-c_{r+1}}(r)\leq\sum_{c_{r+1}=1}^{q}y_{q}(r)\leq q\cdot y_q(r),
\end{equation}
and
\begin{equation}\label{eq:Tdiff}
T(r+1)-T(r)=\min\{j,k-j\},
\end{equation}
so for $r \in [q]$ we have
\begin{align*}
\frac{x_q(r+1)}{x_q(r)} & = \frac{\ell^2C}{(n-v)^{T(r+1)-T(r)}}\frac{y_q(r+1)}{y_q(r)}\\
&\leq \frac{\ell^2C}{(n-v)^{\min\{j,k-j\}}}q  = O\left(\frac{(\ln (\eps^3n^j))^3}{\eps^3 n^{\min\{j,k-j\}}}\right),
\end{align*}
where we have used the fact that $q\le \ell = O\left(\frac{\ln(\eps^3 n^j)}{\eps}\right)$.
Now let us observe that in the case $j=1$, setting $\lambda:= \eps^3 n$ which tends to infinity by assumption,
we have
$$
\frac{x_q(r+1)}{x_q(r)} = O\left(\frac{(\ln \lambda)^3}{\lambda}\right) =O\left(\lambda^{-1/2}\right).
$$
On the other hand, if $j\ge 2$, then (since we are in Case 1) we also have $j\le k-2$, and so
$$
\frac{x_q(r+1)}{x_q(r)} = O\left(\frac{(\ln n)^3}{\eps^3 n^2}\right) =O\left(n^{-1/2}\right).
$$

Setting
$$
w:=
\begin{cases}
\lambda^{1/2} & \mbox{if } j=1, \\
n^{1/2} & \mbox{if } 2\le j\le k-2,
\end{cases}
$$
we have $w \to \infty$ and $\frac{x_q(r+1)}{x_q(r)} = O(1/w)$ in all cases.%
\footnote{\label{footnote:prooffail}Note that it is here that the argument fails for $2\le j=k-1$, since
we would only obtain the bound
$$
\frac{x_q(r+1)}{x_q(r)} = O\left(\frac{(\ln n)^3}{\eps^3 n}\right),
$$
and if $\eps$ is very small (i.e.\ $\eps^3 n \to \infty$ very slowly), this may not tend to zero.
If we were to assume the slightly stronger condition of $\frac{\eps^3 n}{(\ln n)^3}\to \infty$
in Theorem~\ref{thm:mainresult},
then this would not be an issue and we would not need to handle the case
$2\le j=k-1$ separately.
}
Therefore, we obtain
\begin{align*}
\sum_{r=1}^{q}x_q(r) 
&=x_q(1) \left(1 + \sum_{i=1}^{q-1}O\left(\frac{1}{w}\right)^{i} \right) \nonumber \\
&= \frac{C y_q(1)}{(n-v)^{T(1)}}\cdot (1+o(1))   \\
& \le \frac{2C}{n^{(k-j)q+j}}.
\end{align*}
Substituting this upper bound into~\eqref{eqn:maintermforexpectation} gives
\begin{align*}
\sum_{r=1}^{q}\ \sum_{\substack{c_1+\dotsb+c_r=q \\ c_1\geq\dotsb\geq c_r\geq 1}}\frac{(\ell-q+1)^2\ell^{2(r-1)} (a!b!)^qC^r}{p^q (n-v)^{T(r)}}
& \le 
\frac{(a!b!)^q}{p^q}(\ell-q+1)^2 \frac{2C}{n^{(k-j)q+j}} \\
& = O\left(n^{-j}\right) (\ell-q+1)^2 \left(\frac{a!b!}{p n^{k-j}}\right)^q \\
& = O\left(n^{-j}\right) \frac{(\ell-q+1)^2}{(1-\eps)^q},
\end{align*}
as required.    
\end{proof}

\begin{proof}[Proof of Claim~\ref{claim:roneterm}]

By a change of index $i=\ell-q$, we get
\begin{align*}
\sum_{q=1}^{\ell}\frac{(\ell-q+1)^2}{(1-\eps)^q} 
&= \sum_{i=0}^{\ell-1}\frac{(i+1)^2}{(1-\eps)^{\ell-i}}\\
&\leq(1-\eps)^{-\ell} \sum_{i=-2}^{\infty}(i+1)(i+2)(1-\eps)^{i}\\
&\leq(1-\eps)^{-\ell} \frac{\mathrm{d}^2}{\mathrm{d}\eps^2}\left(\sum_{i=-2}^{\infty}(1-\eps)^{i+2}\right)\\
& = (1-\eps)^{-\ell} \frac{\mathrm{d}^2}{\mathrm{d}\eps^2} \left(\frac{1}{\eps}\right) \\
& = \frac{2(1-\eps)^{-\ell}}{\eps^3}
\end{align*}
as claimed.
\end{proof}

\begin{proof}[Proof of Claim~\ref{claim:secondmoment}]
Using Proposition~\ref{prop:doublesum} and Claim~\ref{claim:roneterm},
together with the fact that $\ell=\frac{j\ln n-\omega + 3 \ln \eps}{-\ln(1-\eps)}$,
we have 
\begin{align*}
\sum_{q=1}^\ell\ \sum_{r=1}^{q}\ \sum_{\substack{c_1+\dotsb+c_r=q \\ c_1\geq\dotsb\geq c_r\geq 1}}
& \frac{(\ell-q+1)^2\ell^{2(r-1)} (a!b!)^qC^r}{p^q (n-v)^{T(r)}}\\
& \stackrel{\eqref{eq:doublesum}}{=} O(n^{-j})\sum_{q=1}^{\ell}\frac{(\ell-q+1)^2}{(1-\eps)^q} \\
& \stackrel{\eqref{eqn:roneterm}}{=} O(n^{-j})\frac{(1-\eps)^{-\ell}}{\eps^3} \\
& = O(1)\exp(-j\ln n - 3\ln \eps - \ell \ln(1-\eps))\\
&= O(1)\exp(-\omega) = o(1).
\end{align*}
Substituting this into~\eqref{eqn:squareexp}, we obtain
$
\EE(X_\ell^2)=(n)_v^2 p^{2\ell}(1+o(1)),
$
as claimed.
\end{proof}

\section{Second moment method: Case~2}\label{app:secondmomentcase2}

In this appendix we prove Lemma~\ref{lem:lowerbound-subcritical:case2},
i.e.\ the second moment method for the case when $2\le j = k-1$.

Since much of the proof is identical to the proof of Lemma~\ref{lem:lowerbound-subcritical:case1},
rather than repeating the argument, we will show how to adapt the previous proof
to the special case when $2\le j = k-1$.
Recall from Footnote~\ref{footnote:prooffail} that the reason the proof did not go through for this case was that in~\eqref{eq:Tdiff} we have
$T(r+1)-T(r) = \min \{j,k-j\}=1$,
and we obtain a single factor of $n$ in the denominator of $\frac{x_{q}(r+1)}{x_q(r)}$,
which is not quite enough to dominate the $\ell^2 q \le \ell^3$ term in the numerator.

However, recall that $T(r)=T_q(r)$ represents a lower bound on the number of vertices of $B$ already
embedded in $Q(A,B)$ if this set splits into $r$ intervals (for given $q$).
To help illustrate the main idea in the adaptation of the previous proof,
let us compare $T_q(2)$ with $T_q(1)$.
We have $T_q(1)=(k-j)q + k-1=T_q(2)-1$, but the only 
way of having two intervals which partition $q$ edges and which together contain
exactly $(k-j)q+k$ vertices
is for the two intervals to have exactly one edge separating them,
i.e.\ for the intervals to be of the form
$f_{t_1},\ldots,f_{t_2}$ and $f_{t_2+2},\dotsb,f_{t_3}$.%
\footnote{Observe that it is indeed possible to have two such intervals without
the edge $f_{t_2+1}$ between them also being shared, since the order of vertices either side
of the separating edge may be different on $A$ and $B$.}
We call such a pair of intervals \emph{adjacent}.
Heuristically, if this is to happen then we have only one choice
for where to place the second interval, rather than the factor of $\ell$
that we obtained previously (in the arguments leading to~\eqref{eqn:embededgesinA}).
On the other hand, we must choose \emph{which} of the intervals
will be adjacent.

We therefore introduce a new parameter $r_1=r_1(A,B)$, which is the
number of pairs of intervals which are adjacent on $B$ (and therefore also on $A$),
and let $\cP_\ell^2(q,r,r_1,\mathbf{c})$ be the subset of $\cP_\ell^2$
with the appropriate parameters.
For convenience, define $r_2:=r-r_1$.

Instead of $T(r)=T_q(r)$ as in the previous case, we now define

\begin{align*}
T(r_1,r_2)=T_q(r_1,r_2)& =q+r(k-1)-r_1(k-2)-(r_2-1)(k-3)\\
& = q+r_1+2r_2+k-3.
\end{align*}

For convenience, we also define $r_1':= \max\{r_1,1\}$ (so $r_1'=r_1$ unless $r_1=0$). The analogue of Proposition~\ref{prop:pl2qrcbound} is the following.

\begin{proposition}\label{prop:pl2qrr1cbound}
For $q,r_1\ge 1$, there exists a constant $C$ such that
\begin{align*}
|\cP_\ell^2(q,r,r_1,\mathbf{c})| & \le (n)_{v}^2 \frac{(\ell-q+1)^2 \ell^{2(r_2-1)} C^r}{(n-v)^{T(r_1,r_2)}} \left( \frac{r^2}{r_1'}\right)^{r_1}.
\end{align*}
\end{proposition}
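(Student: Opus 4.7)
The plan is to follow the proof of Proposition~\ref{prop:pl2qrcbound}, modified to exploit the savings available when intervals of $Q(A,B)$ are adjacent on $B$ (and correspondingly on $A$). First I would embed $A$ in $(n)_v$ ways. Next, observe that the $r$ intervals of $Q(A,B)$ are organised on $B$ into $r_2 = r-r_1$ \emph{super-intervals}---maximal runs of intervals connected by the $r_1$ adjacencies---and likewise on $A$. Since the intervals may be ordered differently on $A$ and on $B$, the adjacency structure on $A$ and on $B$ can be chosen independently; using the bound $\binom{r-1}{r_1} \leq (er/r_1')^{r_1}$ for $r_1\ge 1$, this contributes at most $\binom{r-1}{r_1}^2 \leq C_1^{r_1} (r^2/r_1')^{r_1}$ to the count, for some constant $C_1$ depending only on $k$ (absorbing the factor $e^{2r_1}/r_1'^{r_1} \leq (e^2)^{r_1}$).

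Then I would place the super-intervals on $A$ and $B$. Because the positions of individual intervals within a super-interval are forced once the super-interval is placed, choosing starting positions for the $r_2$ super-intervals on $B$ contributes at most $\ell^{r_2-1}(\ell-q+1)$ (being more careful about the last super-interval, as in the original argument), and similarly at most $\ell^{r_2-1}(\ell-q+1)$ for $A$, together with $O(1)^r$ factors for orientations and other constants. Since $j=k-1$ forces $a=1$ and $b=0$, the internal reordering factor $(a!b!)^q$ from Proposition~\ref{prop:pl2qrcbound} is simply $1$, so no analogue of that term appears here.

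For the vertex count, I would establish that the number of vertices of $B$ already embedded into $A$ via $Q(A,B)$ is at least $T(r_1,r_2) = q + r_1 + 2r_2 + k - 3$. Each interval of $c_i$ edges spans $c_i + k - 1$ vertices, summing (ignoring overlaps) to $q + r(k-1)$. Because $k-j=1$, consecutive edges share $k-1$ vertices, so two intervals separated on $B$ by exactly one edge overlap in at least $k-2$ vertices via the separator, while two consecutive intervals separated by at least two edges overlap in at most $k-3$ vertices. Subtracting $r_1(k-2) + (r_2-1)(k-3)$ from $q + r(k-1)$ yields $T(r_1,r_2)$. The remaining $v - T(r_1,r_2)$ vertices of $B$ may then be embedded in at most $(n)_v/(n-v)^{T(r_1,r_2)}$ ways. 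Combining all factors gives the stated bound.

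The main obstacle will be the vertex count: to justify the lower bound $T(r_1,r_2)$ rigorously one must check that the overlaps between pairs of consecutive intervals dominate, so non-consecutive intervals do not force any additional shared vertices beyond those already subtracted, and that the per-gap overlap estimates ($k-2$ for adjacencies, $k-3$ for larger gaps) are the correct sharp values in the $j=k-1$ regime. A secondary subtlety is ensuring that the independent enumeration of adjacency patterns on $A$ and on $B$ really upper-bounds the count: since intervals are identified by their edge sets, one must verify that the subsequent super-interval placements compatibly combine the two patterns without further constraints.
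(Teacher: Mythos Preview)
Your overall plan matches the paper's, and your derivation of $T(r_1,r_2)$ is exactly right. The ``main obstacle'' you flag is not one: along $B$ with $j=k-1$, the vertex set of each interval is a range of consecutive integers, so two intervals that are not consecutive along $B$ are vertex-disjoint; hence subtracting only the consecutive-pair overlaps (exactly $k-2$ for an adjacency, at most $k-3$ otherwise) already gives a valid lower bound.

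The genuine gap is in your adjacency-factor count. First, your premise is false: if two intervals are separated by a single edge on $B$, their end-edges share $k-2$ vertices, and since $j=k-1$ this forces a one-edge gap between them on $A$ as well. So the super-interval decomposition on $A$ coincides with that on $B$; there is only one adjacency structure to choose, not two independent ones. Second, and more seriously, $\binom{r-1}{r_1}$ only records a gap pattern (a composition of $r$ into $r_2$ parts); it does not record \emph{which} of the $r$ labelled intervals populate each super-interval, nor in what internal order. Your subsequent placement step (choosing a starting position for each super-interval) does not supply this information either. So as written your count is an undercount, not an overcount, and the claimed upper bound is not established. The paper fixes this by choosing a single directed \emph{adjacency graph} on the $r$ labelled intervals (an edge $I\to I'$ meaning $I$ immediately precedes $I'$ on $B$), bounded by $\binom{\binom{r}{2}}{r_1}2^{r_1}\le (er^2/r_1')^{r_1}$; its directed-path components are then the super-intervals, which are placed on $B$ by starting position and on $A$ by orientation ($2^{r_2}$) plus starting position. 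This yields the same final expression you wrote down, but via a count that is genuinely an upper bound.
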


\begin{proof}

In contrast to Case~1, when choosing where to place the intervals of $Q(A,B)$ on $B$,
we first choose which pairs of intervals will be adjacent, and
in which order such a pair appears along $B$.
This is equivalent to choosing an auxiliary
\emph{adjacency graph} $G$, an oriented
graph whose vertices are the intervals of $Q(A,B)$, and where an edge oriented from $I_1$
to $I_2$ in $G$ indicates that these intervals will be adjacent and
that $I_1$ will be the first of these to appear in the natural
order along $B$.
The number of ways of choosing $r_1$ such directed edges from
among the $r$ intervals is at most
\begin{equation}\label{eq:chooseadjacencies}
\binom{\binom{r}{2}}{r_1} 2^{r_1} \le \left(\frac{e(r^2/2)}{r_1'}\right)^{r_1} 2^{r_1}
\le \left(\frac{er^2}{r_1'}\right)^{r_1}.
\end{equation}
Note that not every such choice is possible
because in fact $G$ must have maximum indegree and
maximum outdegree at most $1$, and furthermore must be acyclic.
However, this expression certainly gives an upper bound.

We now observe that the components of $G$ are simply directed paths
(including isolated vertices, which are paths of length $0$).
Furthermore, for every directed path in the adjacency graph,
choosing where on $B$ to place the first edge of the first interval
fixes the positions of all remaining edges of every interval on the path.
We therefore consider the intervals corresponding to a component of $G$
to be one \emph{super-interval} (including the isolated vertices of $G$,
which correspond to a single interval).
The length of a super-interval consisting of $I_{i_1},\ldots,I_{i_t}$
is
$$
c_{i_1}+\dotsb + c_{i_t} + t-1\ge c_{i_1}+\dotsb +c_{i_t},
$$
since the edge between
two adjacent intervals also belongs to the super-interval.
The number of super-intervals is $r-r_1=r_2$.

Now we choose where to place the super-intervals on $B$, and as before
we have at most $\ell$ choices for each, but for the last of the super-intervals
we use the stronger bound $\ell-q+1$, similarly to Case~1.
Thus the number of ways of choosing the super-intervals on $B$
is at most
\begin{equation}\label{eq:superintervalsonB}
\ell^{r_2-1}(\ell-q+1).
\end{equation}

We then need to choose where to place the super-intervals on $A$.
(Note that while the edge between two adjacent intervals is not
the same in $A$ and $B$, which edge of $A$ this is will naturally
be fixed by the choice of where the adjacent intervals, which must lie either side of it, have been placed on $A$.)
As before, for each super-interval we first choose an orientation
along $A$, and subsequently there are at most $\ell$ choices for where
to place the super-interval, or $\ell-q+1$ for the last super-interval.
Thus the number of ways of choosing where the super-intervals lie in $A$
is at most
\begin{equation}\label{eq:superintervalsonA}
2^{r_2} \ell^{r_2-1}(\ell-q+1).
\end{equation}

Furthermore, by~\eqref{eq:isomorphisms} the number of ways of
ordering the vertices within the $i$-th interval in a way that is
consistent with the choice of edges is at most
$$
z_{c_i} = \Theta\left((a!b!)^{c_i}\right) = \Theta(1),
$$
since $a=1$ and $b=0$. Since there are $r$ intervals in total,
the number of ways of re-ordering the vertices within $Q(A,B)$
is at most
\begin{equation}\label{eq:reorderings:case2}
\left(\frac{C}{2e}\right)^r
\end{equation}
for some sufficiently large constant $C$.
Thus combining the terms from~\eqref{eq:chooseadjacencies},~\eqref{eq:superintervalsonB}, \eqref{eq:superintervalsonA}
and~\eqref{eq:reorderings:case2}, the number of ways of choosing where on $A$ to embed the vertices
within $Q(A,B)$ is at most
$$
\left(\frac{er^2}{r_1'}\right)^{r_1} \ell^{2(r_2-1)} (\ell-q+1)^2 2^{r_2} \left(\frac{C}{2e}\right)^r
\le \left(\frac{r^2}{r_1'}\right)^{r_1} \ell^{2(r_2-1)} (\ell-q+1)^2 C^r.
$$
This replaces the terms $(\ell-q+1)^2 \ell^{2(r-1)} C^r$ from Proposition~\ref{prop:pl2qrcbound}.
All other terms remain the same as in Case 1, and observing that when $j=k-1$
we have $a=1$ and $b=0$, we obtain the statement of Proposition~\ref{prop:pl2qrr1cbound}.
\end{proof}

Now  the analogue of Corollary~\ref{cor:squareexp} is the following
\begin{corollary}\label{cor:squareexpcase2}
\begin{equation}\label{eqn:squareexpcase2}
\EE(X_\ell^2) 
\leq\mathbb (n)_{v}^2p^{2\ell}\left(1+
\sum_{q=1}^{\ell}\sum_{r=1}^{q}\sum_{r_1=0}^{r-1}\sum_{\substack{c_1+\dotsb+c_r=q \\ c_1\geq\dotsb\geq c_r\geq 1}}
\frac{(\ell-q+1)^2\ell^{2(r_2-1)} C^r}{p^q (n-v)^{T(r_1,r_2)}} \left( \frac{r}{r_1'}\right)^{r_1}  \right).
\end{equation}
\end{corollary}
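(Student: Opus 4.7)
The plan is to derive Corollary~\ref{cor:squareexpcase2} by the same combinatorial decomposition used to pass from Proposition~\ref{prop:pl2qrcbound} and Claim~\ref{claim:expectationfirsttermsplit} to Corollary~\ref{cor:squareexp}, but now refining the stratification of $\cP_\ell^2$ by the additional parameter $r_1$.

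First, I would write $\EE(X_\ell^2) = \sum_{(A,B) \in \cP_\ell^2} \Pr(A, B \subset H^k(n,p))$, observe that this probability equals $p^{2\ell - q(A,B)}$ since it depends only on the number of common edges, and partition the sum according to the four-tuple $(q(A,B), r(A,B), r_1(A,B), \mathbf{c}(A,B))$. Next, I would isolate the $q = 0$ contribution: here necessarily $r = r_1 = 0$, $\mathbf{c} = ()$, and trivially $|\cP_\ell^2(0,0,0,())| \leq ((n)_v)^2$, giving a leading term of at most $((n)_v)^2 p^{2\ell}$, which will become the additive ``$1$'' after factoring. For the remaining terms with $q \geq 1$, I would apply Proposition~\ref{prop:pl2qrr1cbound} to bound each $|\cP_\ell^2(q, r, r_1, \mathbf{c})|$, and then factor $((n)_v)^2 p^{2\ell}$ out of every term, producing exactly the displayed triple sum.

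The only points requiring care are the index ranges. The constraint $r \geq 1$ whenever $q \geq 1$ is immediate from the definition of an interval. The constraint $0 \leq r_1 \leq r - 1$ follows from the observation inside the proof of Proposition~\ref{prop:pl2qrr1cbound} that the adjacency graph on the $r$ intervals is a disjoint union of directed paths, so its number of edges is $r - r_2 \leq r - 1$ (since $r_2 \geq 1$). Finally, the compositions $\mathbf{c} = (c_1, \ldots, c_r)$ with $c_1 \geq \ldots \geq c_r \geq 1$ and $c_1 + \cdots + c_r = q$ give a valid indexing of the remaining interval-length data.

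The main obstacle is purely bookkeeping: confirming that the four-parameter stratification is exhaustive, that the $q = 0$ term accounts for the additive $1$, and that the bounds from Proposition~\ref{prop:pl2qrr1cbound} slot into the sum with the stated ranges. No new probabilistic or combinatorial ingredient is required beyond Proposition~\ref{prop:pl2qrr1cbound}; the derivation is entirely parallel to that of Corollary~\ref{cor:squareexp}, with the extra summation over $r_1$ emerging directly from the refined partition.
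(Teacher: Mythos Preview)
Your proposal is correct and is exactly the (implicit) argument the paper intends: Corollary~\ref{cor:squareexpcase2} is stated without proof as the direct analogue of Corollary~\ref{cor:squareexp}, obtained by partitioning $\cP_\ell^2$ according to $(q,r,r_1,\mathbf{c})$ and substituting the bound from Proposition~\ref{prop:pl2qrr1cbound}. Note, incidentally, that the displayed inequality in the corollary has $(r/r_1')^{r_1}$ where Proposition~\ref{prop:pl2qrr1cbound} yields $(r^2/r_1')^{r_1}$; this is a typo in the paper's statement (the subsequent Proposition~\ref{prop:triplesum} reverts to $r^2$), not a flaw in your reasoning.
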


The following takes the place of Proposition~\ref{prop:doublesum}.

\begin{proposition}\label{prop:triplesum}
$$
\sum_{r=1}^{q}\sum_{r_1=0}^{r-1}\sum_{\substack{c_1+\dotsb+c_r=q \\ c_1\geq\dotsb\geq c_r\geq 1}}
\frac{(\ell-q+1)^2\ell^{2(r_2-1)}C^r}{p^q (n-v)^{T(r_1,r_2)}}  \left(\frac{r^2}{r_1'}\right)^{r_1}
=
O\left(n^{-j}\right) \frac{(\ell-q+1)^2}{(1-\eps)^q}.
$$
\end{proposition}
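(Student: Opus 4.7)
The plan is to mimic the strategy of Proposition~\ref{prop:doublesum}, identifying the single dominant term (corresponding to $r=1$, $r_1=0$, $r_2=1$) and showing that every other term in the triple sum contributes at most a constant multiple of it. Using $p=(1-\eps)/(n-k+1)$, the fact that $k-1=j$, and $n-v=(1-o(1))n$, I would factor out
\[
\frac{(\ell-q+1)^2}{p^q(n-v)^{q+k-1}}=\Theta(1)\,\frac{(\ell-q+1)^2}{(1-\eps)^q n^{j}},
\]
which already matches the right-hand side of the proposition. After summing $\sum_{\mathbf{c}}1=y_q(r)$, it therefore suffices to prove
\[
\Sigma:=\sum_{r=1}^{q} y_q(r)\,C^{r}\sum_{r_1=0}^{r-1}\frac{\ell^{2(r_2-1)}}{(n-v)^{r_1+2r_2-2}}\left(\frac{r^2}{r_1'}\right)^{r_1}=O(1),
\]
where $r_2=r-r_1$.

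For fixed $r$, substituting $r_2=r-r_1$ and introducing $A=A(n):=(n-v)/\ell^{2}$ gives the clean factorisation
\[
\frac{\ell^{2(r-r_1-1)}}{(n-v)^{2r-r_1-2}}\left(\frac{r^2}{r_1'}\right)^{r_1}=\left(\frac{\ell}{n-v}\right)^{2(r-1)}\left(\frac{A r^{2}}{r_1'}\right)^{r_1}.
\]
Since $\eps^3 n\to\infty$ implies $\eps^2 n/(\ln n)^2=(\eps^3 n)^{2/3}\cdot n^{1/3}/(\ln n)^2\to\infty$, we have $A\to\infty$. A direct calculation shows that $f(u):=(Ar^{2}/u)^{u}$ is increasing on $[1,r-1]$ with consecutive ratios $f(u+1)/f(u)=Ar^{2}/(u+1)\cdot(u/(u+1))^{u}\ge Ar/e$; so the sum is dominated by its largest term and
\[
\sum_{r_1=1}^{r-1}\!\left(\frac{Ar^{2}}{r_1}\right)^{r_1}=O\!\left((Ar)^{r-1}\right).
\]

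I would then split $\Sigma$ into the $r_1=0$ and $r_1\ge 1$ pieces. Using $y_q(r)\le q^{r-1}$ (which is a direct iteration of~\eqref{eq:yratio}), the $r_1=0$ piece is
\[
\sum_{r\ge 1} y_q(r)\,C^{r}\!\left(\tfrac{\ell}{n-v}\right)^{2(r-1)}\le C\sum_{r\ge 1}\!\left(\tfrac{Cq\ell^{2}}{(n-v)^{2}}\right)^{r-1},
\]
which is geometric with ratio $O(\ln^{3}n/(\eps^{3}n^{2}))\le O(\ln^{3}n/n)=o(1)$, and so is $O(1)$. For the $r_1\ge 1$ piece I would use the key simplification $A\ell^{2}/(n-v)^{2}=1/(n-v)$ to obtain
\[
\sum_{r\ge 2} y_q(r)\,C^{r}\!\left(\tfrac{r}{n-v}\right)^{r-1}\le C\sum_{r\ge 2}\!\left(\tfrac{Cq^{2}}{n-v}\right)^{r-1},
\]
which is geometric with ratio $O(\ln^{2}n/(\eps^{2}n))=o(1)$, hence also $O(1)$.

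The main obstacle is Step~2: the proof that $A\to\infty$ is strong enough to reduce the apparently delicate sum $\sum_{r_1}(Ar^{2}/r_1')^{r_1}$ to its largest term. This is precisely the weakness responsible for the failure of the Case~1 argument when $j=k-1$, as noted in Footnote~\ref{footnote:prooffail}; the new parameter $r_1$ and the tighter combinatorial factor $(r^2/r_1')^{r_1}$ coming from Proposition~\ref{prop:pl2qrr1cbound} are exactly what is needed to save the requisite extra factor of $n$, turning the marginally-divergent geometric series of Case~1 into the genuinely convergent geometric series above.
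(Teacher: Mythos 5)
Your proof is correct and takes essentially the same route as the paper's: both identify the $(r,r_1)=(1,0)$ term as dominant, show the inner sum over $r_1$ is controlled by its extreme term because consecutive ratios are of order $A=(n-v)/\ell^2\to\infty$ (the paper phrases this as the ratio of its $z_{q,r}(r_1)$ being at least $n^{1/4}$), and then bound the outer sum over $r$ by a geometric series with ratio $O(q^2/n)=o(1)$. The only cosmetic imprecision is that your bound $O((Ar)^{r-1})$ on the $r_1\ge 1$ sum should carry an extra factor $2^{r-1}$ from $\bigl(\tfrac{Ar^2}{r-1}\bigr)^{r-1}\le (2Ar)^{r-1}$, which is harmlessly absorbed into $C^r$.
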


\begin{proof}
We first observe that
\begin{align*}
T(r_1,r_2) & = q+r_1+2r_2+k-3 \\
& = q + 2r - r_1 + k-3 = O(\ell)
\end{align*}
and therefore
\begin{align*}
(n-v)^{T(r_1,r_2)} & = n^{q+2r-r_1 + k-3} \left(1-O\left(\frac{\ell}{n}\right)\right)^{O(\ell)}\nonumber\\
& = n^{q+2r-r_1 + k-3} \left(1-O\left(\frac{\ell^2}{n}\right)\right)\nonumber\\
& =  n^{q+2r-r_1 + k-3}(1-o(1)).
\end{align*}
Since $p=\frac{1-\eps}{n-k+1}$, we obtain
\begin{equation}\label{eq:ntermscase2}
p^q (n-v)^{T(r_1,r_2)} = (1+o(1))(1-\eps)^q n^{2r-r_1+k-3}.
\end{equation}
As in Case 1, we define
\[
y_q(r) := \sum_{\substack{c_1+\dotsb+c_r=q \\ c_1\geq\dotsb\geq c_r\geq 1}}1,
\]
but this time we define
$$
x_q(r):=y_q(r) \sum_{r_1=0}^{r-1} \frac{\ell^{2r-2r_1}C^r}{n^{2r-r_1}}\left(\frac{r^2}{r_1'}\right)^{r_1}
= y_q(r)\left(\frac{C\ell^2}{n^2}\right)^r \sum_{r_1=0}^{r-1} \left(\frac{nr^2}{\ell^2 r_1' }\right)^{r_1},
$$
so that substituting these definitions into the triple-sum and using~\eqref{eq:ntermscase2}, we obtain
\begin{align}\label{eqn:squareexpcase2simplified1}
& \sum_{r=1}^{q}\sum_{r_1=0}^{r-1}\sum_{\substack{c_1+\dotsb+c_r=q \\ c_1\geq\dotsb\geq c_r\geq 1}}
\frac{(\ell-q+1)^2\ell^{2(r_2-1)} C^r}{p^q (n-v)^{T(r_1,r_2)}} \left(\frac{r^2}{r_1'}\right)^{r_1} \nonumber\\
& \hspace{3cm} =
(1+o(1))\frac{(\ell-q+1)^2}{(1-\eps)^q \ell^2 n^{k-3}}\sum_{r=1}^{q}x_q(r).
\end{align}
Once again, the initial aim is to show that $\sum_{r=1}^{q}x_q(r) = (1+o(1))x_q(1)$. To achieve this,
we define
$$
z_{q,r}(r_1) := \left(\frac{nr^2}{\ell^2 r_1'}\right)^{r_1}.
$$
Let us observe that, for $2\le r_1 \le r-1$ we have
\begin{align*}
\frac{z_{q,r}(r_1)}{z_{q,r}(r_1-1)} & = \frac{nr^2}{\ell^2} \left(\frac{r_1^{r_1}}{(r_1-1)^{r_1-1}}\right)^{-1} \\
& = \frac{nr^2}{\ell^2 r_1} \left(1+\frac{1}{r_1-1}\right)^{-(r_1-1)} \\
& \ge \frac{nr}{\ell^2} \cdot e^{-1} \\
& \ge n^{1/4},
\end{align*}
since
$\ell = O\left(\frac{\ln n}{\eps}\right) = o\left(n^{1/3}\ln n\right)$.
Meanwhile we also have
$
\frac{z_{q,r}(1)}{z_{q,r}(0)} = \frac{nr^2}{\ell^2} \ge n^{1/4},
$
 and so
$$
z_{q,r}(r_1) \le z_{q,r}(r-1) n^{-(r-1-r_1)/4}.
$$
Therefore
\begin{align*}
\sum_{r_1=0}^{r-1} z_{q,r}(r_1) & \le z_{q,r}(r-1) \sum_{r_1=0}^{r-1} n^{-(r-1-r_1)/4}\\
& = \left(\frac{nr^2}{\ell^2 \max\{r-1,1\}}\right)^{r-1}(1+o(1)) \\
& \le \left(\frac{2nr}{\ell^2}\right)^{r-1}(1+o(1)),
\end{align*}
which leads to
\begin{align}
x_q(r) & \le y_q(r) \left(\frac{C\ell^2}{n^{2}}\right)^r \left(\frac{2nr}{\ell^2}\right)^{r-1}(1+o(1))\nonumber\\
& = (1+o(1))y_q(r) \frac{\ell^2}{2rn} \left(\frac{2Cr}{n}\right)^r\nonumber\\
& = (1+o(1)) x_q'(r),\label{eq:xratiocase2}
\end{align}
where we define
$$
x_q'(r):=y_q(r) \frac{\ell^2}{2rn} \left(\frac{2Cr}{n}\right)^r.
$$
Now observe that, since the definition of $y_q(r)$ is the same as in Case~1,~\eqref{eq:yratio} still holds, and so
\begin{align*}
\frac{x_q'(r+1)}{x_q'(r)} & = \frac{y_q(r+1)}{y_q(r)} \frac{r}{r+1} \frac{2C(r+1)}{n} \left(\frac{r+1}{r}\right)^r \\
& \le q \cdot \frac{2Cr}{n} \cdot e = O\left(\frac{q^2}{n}\right) \le n^{-1/4},
\end{align*}
since $q \le \ell = o\left(n^{1/3}\ln n\right)$.
We deduce that
\begin{equation}\label{eq:xsumcase2}
\sum_{r=1}^q x_q'(r) \le x_q'(1)\sum_{r=1}^q n^{-(r-1)/4} = (1+o(1))x_q'(1)
\end{equation}
and therefore
$$
\sum_{r=1}^q x_q(r) \stackrel{\eqref{eq:xratiocase2}}{\le} (1+o(1))\sum_{r=1}^q x_q'(r)
\stackrel{\eqref{eq:xsumcase2}}{=} (1+o(1))x_q'(1) = (1+o(1))\frac{C\ell^2}{n^2}. 
$$
Substituting this expression into~\eqref{eqn:squareexpcase2simplified1},
we obtain
\begin{align*}
\sum_{r=1}^{q}\sum_{r_1=0}^{r-1}\sum_{\substack{c_1+\dotsb+c_r=q \\ c_1\geq\dotsb\geq c_r\geq 1}}
\frac{(\ell-q+1)^2\ell^{2(r_2-1)} C^r}{p^q (n-v)^{T(r_1,r_2)}} \left(\frac{r^2}{r_1'}\right)^{r_1}
& =
(1+o(1))\frac{C(\ell-q+1)^2}{(1-\eps)^q n^{k-1}} \\
& = O\left(n^{-j}\right)\frac{(\ell-q+1)^2}{(1-\eps)^q }
\end{align*}
since $j=k-1$.
\end{proof}

Finally observe that Claim~\ref{claim:roneterm} from Case~1 is still
valid for this case. Thus as before we can combine the auxiliary results
to prove the lower bound.

\begin{proof}[Proof of Lemma~\ref{lem:lowerbound-subcritical:case2}]
By substituting the bound from Proposition~\ref{prop:triplesum}
into~\eqref{eqn:squareexpcase2},
we obtain
\begin{align*}
\EE(X_\ell^2) 
& \leq (n)_{v}^2p^{2\ell}\left(1+ O\left(n^{-j}\right) \sum_{q=1}^\ell \frac{(\ell-q+1)^2}{(1-\eps)^q } \right)\\
& \stackrel{\mbox{{\tiny Cl.\ref{claim:roneterm}}}}{=}  (n)_{v}^2p^{2\ell}\left(1+ O\left(n^{-j}\right) \frac{2(1-\eps)^\ell}{\eps^3} \right),
\end{align*}
and exactly the same argument as in Case~1 shows that
$$
 O\left(n^{-j}\right) \frac{2(1-\eps)^\ell}{\eps^3} = o(1),
$$
so
$$
\EE(X_\ell^2) 
\leq\mathbb (n)_{v}^2p^{2\ell}\left(1+ o(1)\right) = (1+o(1))\EE(X_\ell)^2,
$$
as required.
\end{proof}

\end{document}